\DeclareMathOperator{\strongprod}{\,\boxtimes\,}
\DeclareMathOperator{\mob}{Mob_{gp}}
\DeclareMathOperator{\mobmv}{Mob_{\mu}}
\DeclareMathOperator{\cmobmv}{Mob^*_{\mu}}
\DeclareMathOperator{\cmob}{Mob^*_{gp}}
\DeclareMathOperator{\gp}{gp}
\DeclareMathOperator{\diam}{diam}
\DeclareMathOperator{\cir}{c}
\DeclareMathOperator{\cp}{\,\square\,}
\newtheorem{theorem}{Theorem}[section]
\newtheorem{lemma}[theorem]{Lemma}
\newtheorem{corollary}[theorem]{Corollary}
\newtheorem{proposition}[theorem]{Proposition}
\newtheorem{conjecture}[theorem]{Conjecture}
\newtheorem{problem}[theorem]{Problem}
\theoremstyle{definition}
\newtheorem{definition}[theorem]{Definition}
\newcommand{\move}{\rightsquigarrow}
\newcommand{\address}[1]{#1}
\tikzset{middlearrow/.style={
		decoration={markings,
			mark= at position 0.75 with {\arrow[scale=2]{#1}} ,
		},
		postaction={decorate}
	}
}
\tikzset{midarrow/.style={
		decoration={markings,
			mark= at position 0.75 with {\arrow[scale=2]{#1}} ,
		},
		postaction={decorate}
	}
}
\begin{document}
	
	\title{Solution to some conjectures on mobile position problems}
	
	\author{ Ethan Shallcross $^{a}$ \\ \texttt{\footnotesize ershallcross@gmail.com} \\
		\and
		James Tuite $^{a,b}$ \\ \texttt{\footnotesize james.t.tuite@open.ac.uk} \\
		\and Aoise Evans $^{a}$ \\ \texttt{\footnotesize aoiseev@gmail.com}
		\and
		Aditi Krishnakumar $^{a}$ \\ \texttt{\footnotesize aditikrishnakumar@gmail.com} \\  
		\and Sumaiyah Boshar   \\ \texttt{\footnotesize }    
	}
	
	\maketitle	
	
	\address{
		\noindent
		$^a$  School of Mathematics and Statistics, Open University, Milton Keynes, UK\\
		$b$ Department of Informatics and Statistics, Klaip\.{e}da University, Lithuania\\
		
	}

	\begin{abstract}
		The general position problem for graphs asks for the largest number of vertices in a subset $S \subseteq V(G)$ of a graph $G$ such that for any $u,v \in S$ and any shortest $u,v$-path $P$ we have $S \cap V(P) = \{ u,v\} $, whereas the mutual visibility problem requires only that for any $u,v \in S$ there exists a shortest $u,v$-path with $S \cap V(P) = \{ u,v\} $. In the mobile versions of these problems, robots must move through the network in general position/mutual visibility such that every vertex is visited by a robot. This paper solves some open problems from the literature. We quantify the effect of adding the restriction that every robot can visit every vertex (the so-called \emph{completely mobile} variants), prove a bound on both mobile numbers in terms of the clique number, and find the mobile mutual visibility number of line graphs of complete graphs, strong grids and Cartesian grids.
	\end{abstract}
	
	\noindent
	{\bf Keywords:}
	general position set, general position, mutual visibility, Cartesian product
	
	\medskip\noindent
	{\bf AMS Subj.\ Class.\ (2020)}: 05C12, 05C69
	
	\section{Introduction}
	
	Imagine that a swarm of robots is stationed at the nodes of a network, and they communicate by sending signals to each other along shortest paths. To keep communication channels free, we require that for any pair of robots there is at least one shortest path between them that does not pass through another robot. Finding the largest number of robots that can satisfy this condition is equivalent to the \emph{mutual visibility problem} for graphs~\cite{DiStefano}. More strictly, if we require that \emph{all} shortest paths between any pair of robots are free from a third robot, we obtain the \emph{general position problem} for graphs (see~\cite{survey} for a survey of this problem). Interestingly this communication problem was one of the motivations for the problem introduced in the arXiv version of one of the seminal papers for the general position problem for graphs~\cite{manuel-2017arxiv}, whereas the first paper~\cite{DiStefano} on the mutual visibility problem for graphs was inspired by an already large literature on mutual visibility problems in robotic navigation.
	
	One feature missing from both the general position and mutual visibility problems is motion. The paper~\cite{klavzar-2023} considered a dynamic variation on the general position problem in which a swarm of robots must move through the graph, with one robot moving to a free adjacent vertex at each stage, such that each vertex is visited by a robot and the swarm of robots remain in general position at each stage. This is called the \emph{mobile general position problem} and the largest number of robots in such a swarm is the \emph{mobile general position number} of the graph. The paper~\cite{klavzar-2023} discussed the relationship between the mobile general position number and the general position number, determined the mobile general position number of Kneser graphs, cycles, complete multipartite graphs, block graphs and line graphs of complete graphs, and showed that the mobile general position number of unicyclic graphs is unbounded. 
	
	It follows from the realisation result for mobile general position vs. general position in~\cite{klavzar-2023} that there is no lower bound for the mobile general position number in terms of the general position number. A problem posed in~\cite{klavzar-2023} was whether there exists a lower bound for the mobile general position number as a function of the clique number.
	
	\begin{problem}[\cite{klavzar-2023}]\label{prob:mob gp vs clique num}
		Is there a general relationship between the mobile general position number and
		clique number?
	\end{problem}
	
	A further paper~\cite{Cartesian} explored the mobile general position number for Cartesian products, corona products and joins of graphs. In the section on join graphs, it was proved that $\min \{ \omega(G),\omega (H)\} +1 \leq \mob (G \vee H) \leq \omega (G)+\omega (H)-1$ when $G$ and $H$ both have order at least two and at least one is not complete, whilst for a join with a trivial graph we have $2 \leq \mob (G \vee K_1) \leq \omega (G)+1$ (where both the upper and lower bounds are tight). It was left as an open question which graphs can meet the lower bound, in particular whether it is possible to meet it if the clique number of $G$ is large. 
	
	\begin{problem}[\cite{Cartesian}]\label{prob:join}
		Are there graphs $G$ with arbitrarily large clique number such that $\mob (G \vee K_1) = 2$?   
	\end{problem}
	
	The paper~\cite{dettlaff2024} introduced the mobile version of the mutual visibility problem. As well as finding the mobile mutual visibility number of various graph classes, the authors showed that a graph has mobile mutual visibility number two if and only if it is a tree, investigated the problem in lexicographic, strong and Cartesian products and showed that the mobile mutual visibility problem is NP-hard.
	
	The mobile mutual visibility number of strong grids were classified in Problem~\ref{prob:strong grids} up to two possible values, and left finding the exact value as an open problem. The authors also found the mobile mutual visibility number of some prism graphs, including the Cartesian grid $P_n \cp P_2$, and posed the problem of finding this number for all Cartesian grids.  
	
	\begin{problem}[Problem 6,~\cite{dettlaff2024}]\label{prob:strong grids}
		For $r \geq 4, s \geq 3$, the mobile mutual visibility number of the strong grid $P_r \strongprod P_s$ satisfies \[ 2r+2s-6 \leq \mobmv(P_r \strongprod P_s) \leq 2r+2s-5.\] Which value is correct?
	\end{problem}
	
	\begin{problem}[Problem 6,~\cite{dettlaff2024}]\label{prob:Cartesian grids}
		What is the mobile mutual visibility number of a Cartesian grid?
	\end{problem}
	
	The paper~\cite{dettlaff2024} also investigated mobile mutual visibility sets in the line graphs of complete graphs. Mutual visibility sets in $L(K_n)$ correspond to the edges of $K_4$-free subgraphs of $K_n$. It follows that the number of robots of a mobile mutual visibility set of $L(K_n)$ is bounded above by the size of the 3-partite Tur\'{a}n graph of order $n$. Using this observation, the authors showed that strict inequality holds in this bound, and asked how large the difference between the mobile mutual visibility number and this upper bound can be.
	
	\begin{problem}[Problem 5,~\cite{dettlaff2024}]\label{prob:LK_n}
		What is the difference between the mutual visibility number and the mobile mutual visibility number of $L(K_n)$, the line graph of a clique?
	\end{problem}
	
	A problem posed in~\cite{klavzar-2023} concerned the effect of adding the restriction that every vertex can be visited by every robot; this is called the \emph{completely mobile general position problem}. We can define the \emph{completely mobile general position number} in an analogous way to the mobile general position number. In particular,~\cite{klavzar-2023} asked how large the difference between the completely mobile and mobile general position numbers can be.
	
	\begin{problem}[\cite{klavzar-2023}]\label{prob:completely mobile}
		What is the result on the mobile general position number if we make the stronger requirement that \emph{every} vertex is visited by \emph{every} robot?
	\end{problem}
	
	The goal of the present paper is to answer all of these problems. A further question asked in~\cite{dettlaff2024} is whether the equality $\mobmv (Q_d) = \mu (Q_d)$ holds for any hypercube graph. We can immediately answer this question in the negative: computer search shows that $\mobmv (Q_4) = 8 < 9 = \mu (Q_4)$. The inequality can also be seen from the fact that all mutual visibility sets of $Q_4$ of order nine are equivalent up to isomorphism.
	
	The plan of this paper is as follows. Section~\ref{sec:preliminaries} introduces the necessary graph theory terminology and background for the general position and mutual visibility problems, and proves a realisation result for the mobile general position and mobile mutual visibility numbers. We answer Problem~\ref{prob:mob gp vs clique num} in Section~\ref{sec:clique number} by giving a tight lower bound for the mobile numbers as a function of the clique number, which also settles Problem~\ref{prob:join} in the negative. In Section~\ref{sec:join} we consider the mobile mutual visibility number of the join of graphs. Section~\ref{sec:grid graph} considers the mobile mutual visibility problem for strong and Cartesian grids, thus answering Problems~\ref{prob:strong grids} and~\ref{prob:Cartesian grids}. In Section~\ref{sec:completely mobile} we answer Problem~\ref{prob:mob gp vs clique num} by characterising all possible combinations of mobile and completely mobile position numbers for the general position and mutual visibility problems. Using a stability result for Tur\'{a}n's Theorem we resolve Problem~\ref{prob:LK_n} in Section~\ref{sec:line graph complete graph}. We conclude with some open problems in Section~\ref{sec:conclusion}.
	
	\section{Preliminaries}\label{sec:preliminaries}
	
	A graph $G = (V,E)$ consists of a set $V(G)$ of vertices connected by a set $E(G)$ of edges. All graphs considered in this paper are finite, simple and undirected. We denote the clique number of a graph by $\omega (G)$. The distance $d(u,v)$ between two vertices $u,v$ is the length of a shortest $u,v$-path, and the diameter $\diam (G)$ of $G$ is $\max \{ d(u,v):u,v \in V(G)\} $. For a subset $S \subseteq V(G)$ and a vertex $u \in V(G)$ the distance $d(u,S)$ between $u$ and $S$ is $\min \{ d(u,v):v \in S\} $. We will denote the difference of two sets $S$ and $T$ by $S-T$. We will sometimes identify the vertices of a path $P_n$ with $[n] = \{ 1,\dots ,n\} $ in the natural way. The length of a longest cycle in $G$ is the \emph{circumference} of $G$, which we denote by $\cir (G)$.
	
	The Cartesian product $G \cp H$ of two graphs $G$ and $H$ is the graph with vertex set $V(G) \times V(H)$ in which two vertices $(u,v),(u',v')$ are adjacent in $G \cp H$ if and only if either $u = u'$ and $v \sim_H v'$, or $v = v'$ and $u \sim_G u'$. Similarly, the strong product $G \strongprod H$ is the graph with vertex set $V(G) \times V(H)$ in which any pair $(u,v)$ and $(u',v')$ is adjacent if and only if $u = u'$ and $ v \sim _H v'$, $v = v'$ and $u \sim _G u'$, or $u \sim _G u'$ and $v \sim _H v'$. The line graph $L(G)$ of a graph $G$ is the graph with vertex set $E(G)$ in which any two $e_1,e_2 \in V(L(G)) = E(G)$ are adjacent if and only if they are incident to a common vertex in $G$. For any terminology not defined here, we refer to~\cite{Graphsdigraphs}. We now formally define general position and mutual visibility sets.
	
	\begin{definition}[General position]
		A subset $S \subseteq V(G)$ of a graph $G$ is a \emph{general position set} of $G$ if no shortest path of $G$ passes through more than two vertices of $S$. The largest cardinality of a general position set of $G$ is the \emph{general position number} of $G$, denoted by $\gp (G)$.
	\end{definition}
	
	\begin{definition}[Mutual visibility]
		A subset $S \subseteq V(G)$ of a graph $G$ is a \emph{mutual visibility set} if for any $u,v \in S$ there exists a shortest $u,v$-path $P$ of $G$ such that $V(P) \cap S = \{ u,v\} $. The largest cardinality of a mutual visibility set of $G$ is the \emph{mutual visibility number} of $G$, written $\mu (G)$.
	\end{definition}
	
	When we wish to make a general statement on general position and mutual visibility, we adopt the convention that $\pi $ can stand for either $\gp $ or $\mu $, as the case may be, and refer to a `position set' of that type as a $\pi $-set. Suppose then that $S$ is a $\pi $-set of $G$. A \emph{legal move} is when a vertex $u \in S$ is replaced by an adjacent vertex $u' \in N(u)-S$ such that $S' = (S - \{ u\} ) \cup \{ u'\} $ is also a $\pi $-set. We indicate this move by $u \move u'$. 
	
	\begin{definition}
		A \emph{configuration} in $G$ is an assignment of robots to distinct vertices of $G$. We will denote this as an ordered tuple $(u_1,u_2,\dots ,u_t)$, where robot $R_i$ is assigned to vertex $u_i$ for $1 \leq i \leq t$. The configuration is a \emph{mobile $\pi $-configuration} if there is a sequence of legal moves starting from $(u_1,u_2,\dots ,u_t)$ such that every vertex of $G$ is visited by a robot. The set of vertices corresponding to the configuration is a \emph{mobile $\pi $-set} and the number of vertices in a largest mobile $\pi $-set is the \emph{mobile general position number} $\mob (G)$ or the \emph{mobile mutual visibility number} $\mobmv (G)$, as the case may be.
	\end{definition}

	When clear from the context, we will sometimes identify a robot with the position that it occupies. We use the following characterisation of a mobile $\pi $-set, which allows us to show mobility without describing the full sequence of moves needed to move the swarm around every vertex of the graph. If we start from a $\pi $-configuration and can visit any vertex of $G$ with a robot by a sequence of legal moves, then we can reverse the process to return the robots to their starting configuration and then repeat the process to visit another vertex until all vertices have been visited by a robot.
	
	\begin{lemma}\label{lem:mobility}
		A configuration $(u_1,\dots ,u_t)$ in a graph $G$ is a mobile $\pi $-configuration if and only if for any $u \in V(G)$ there is a sequence of legal moves starting from $(u_1,\dots ,u_t)$ such that $u$ is visited by a robot, with the swarm of robots remaining a $\pi $-set at each stage.
	\end{lemma}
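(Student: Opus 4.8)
The plan is to prove the two directions separately. The forward implication is immediate: if $(u_1,\dots,u_t)$ is a mobile $\pi$-configuration then by definition there is a sequence of legal moves from it, with every intermediate swarm a $\pi$-set, after which every vertex of $G$ has been visited; in particular, for any prescribed $u\in V(G)$ this same sequence visits $u$, which is exactly the right-hand condition. All the content lies in the converse, which rests on the reversibility of legal moves already sketched in the paragraph preceding the statement.

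The first step of the converse is to record that a legal move can be undone. If $u\move u'$ is legal with $S'=(S-\{u\})\cup\{u'\}$, then $u\sim u'$ (so $u\in N(u')$), $u\notin S'$ (since $u\ne u'$ and $u$ was removed), and $(S'-\{u'\})\cup\{u\}=S$ is a $\pi$-set by hypothesis; hence $u'\move u$ is itself a legal move, and it returns the swarm to $S$. Consequently any finite sequence $\sigma$ of legal moves starting from a configuration $C$ may be followed by its reverse $\overline{\sigma}$ to return to $C$, with every intermediate swarm a $\pi$-set.

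The second step is a concatenation argument. Enumerate $V(G)=\{v_1,\dots,v_n\}$ and put $C_0=(u_1,\dots,u_t)$. For each $i$ the hypothesis supplies a sequence $\sigma_i$ of legal moves from $C_0$, with every intermediate swarm a $\pi$-set, during which $v_i$ is occupied by some robot (the empty sequence if $v_i\in\{u_1,\dots,u_t\}$). Then $\sigma_1\,\overline{\sigma_1}\,\sigma_2\,\overline{\sigma_2}\cdots\sigma_n\,\overline{\sigma_n}$ is a single sequence of legal moves from $C_0$: after each block $\sigma_i\,\overline{\sigma_i}$ the swarm is back at $C_0$, so the next block $\sigma_{i+1}$ applies and every move in it is legal; and during block $\sigma_i$ the vertex $v_i$ is visited. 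Hence this sequence shows that every vertex of $G$ is visited by a robot while the swarm stays a $\pi$-set at each stage, i.e. $C_0$ is a mobile $\pi$-configuration, as required.

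I do not expect any real obstacle. The only point that needs care is the reversibility claim — checking that the reversed move again lands on a $\pi$-set and that its target vertex is genuinely free — and this is immediate from the symmetry of adjacency together with the definition of a legal move. Everything else is bookkeeping about splicing move sequences together.
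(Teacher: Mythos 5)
Your proof is correct and follows exactly the argument the paper gives (in the paragraph immediately preceding the lemma): the forward direction is immediate from the definition, and the converse uses reversibility of legal moves to return to the starting configuration and then concatenates the per-vertex sequences. Your write-up is just a more careful, explicit version of the paper's sketch.
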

	
	We begin by comparing the mobile general position and mutual visibility numbers. As any general position set is also a mutual visibility set, it is trivial that $\gp (G) \leq \mu (G)$ and $\mob (G) \leq \mobmv (G)$ for any graph $G$. 
	
	\begin{proposition}\label{prop:realisation}
		There exists a graph $G$ with $\mob (G) = a$ and $\mobmv (G) = b$ if and only if $a = b = 1$ or $2 \leq a \leq b$.
	\end{proposition}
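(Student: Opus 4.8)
The plan is to prove the two directions separately. For the ``only if'' direction, note first that $\mob(G) = 1$ forces $G = K_1$ (a single robot can only stay put, so it can only visit all vertices if there is just one), and then $\mobmv(G) = 1$ as well; this accounts for the case $a = b = 1$. If $a \geq 2$ then $\mobmv(G) \geq \mob(G) \geq 2$ by the trivial inequality $\mob(G) \leq \mobmv(G)$ noted just above the statement, so $2 \leq a \leq b$ is forced. (One should also check that $\mob(G) = 2$ is genuinely attainable for infinitely many $G$ with arbitrarily large $\mobmv(G)$, which is part of the ``if'' direction anyway.)

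For the ``if'' direction the case $a = b = 1$ is realised by $K_1$. For $2 \leq a \leq b$ we need a family of gadgets. The natural building block is a tree or a near-tree: the excerpt recalls that $\mobmv(G) = 2$ exactly for trees, so a path $P_n$ already gives $\mob = \mobmv = 2$, handling $a = b = 2$. For larger target values I would look for a graph built from a ``core'' on which $b$ robots can move freely in mutual visibility but only $a$ of them can maintain general position, attached to the rest of the graph by a long path or pendant structure that pins down the mobile numbers (pendant paths tend to drag mobile numbers down towards the number of robots needed to ``shepherd'' a robot to the leaf). A clean candidate is something like $K_b$ with a subdivided edge or a pendant path attached, where the complete graph $K_b$ is a mutual visibility set (indeed $\mu(K_b) = b$) but $\mathrm{gp}$ of the whole graph is smaller because shortest paths through the pendant structure interact badly; tuning the number of pendant paths and their lengths should let us hit any pair $2 \leq a \leq b$. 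The role of Lemma \ref{lem:mobility} here is crucial: it lets us verify mobility by checking only that a single arbitrary target vertex can be reached, rather than exhibiting a full tour, which makes the gadget analysis tractable.

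The main obstacle I expect is the \emph{lower} bounds, i.e.\ showing that the constructed graph has $\mob(G)$ (resp.\ $\mobmv(G)$) \emph{no larger} than $a$ (resp.\ no larger than $b$): upper-bounding a mobile number means arguing that any configuration of $a+1$ robots gets stuck somewhere, which requires a careful case analysis of which vertex cannot be reached, and these arguments are fiddly because one must rule out all sequences of legal moves. Getting the exact values rather than just ranges will likely force the gadget to be chosen with some care so that the ``stuck'' configurations are easy to identify — probably by making the graph have a cut vertex or a narrow bottleneck through which robots must funnel one at a time, so that counting how many robots can be ``parked'' on either side gives a sharp bound. The matching constructions for $a < b$ versus $a = b$ may need slightly different gadgets, so I would organise the proof as: (1) the degenerate case; (2) a construction and full analysis for general $2 \le a \le b$, splitting into $a=b$ (e.g.\ $P_n$ or a clique with a pendant path) and $a < b$ (a clique-plus-pendant-paths gadget with parameters chosen to separate $\mathrm{gp}$-mobility from $\mu$-mobility), citing Lemma \ref{lem:mobility} throughout to keep the mobility verifications short.
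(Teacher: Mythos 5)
The ``only if'' direction is essentially fine (though your justification that $\mob(G)=1$ forces $G=K_1$ is off: a single robot is never blocked and \emph{can} move, since any singleton is a general position set; the correct reason is that on any connected graph with at least two vertices, \emph{two} robots already form a mobile general position configuration, so $\mob(G)\geq 2$). The real problem is the ``if'' direction, which is the entire content of the proposition and which you do not actually prove: you describe a search strategy (``$K_b$ with pendant paths, tune the parameters'') rather than exhibiting a graph and verifying its two parameters. Worse, the specific gadget you gesture at does not separate the two invariants: for $K_b$ with a pendant path attached at $v_0$, once the robot leaves $v_0$ for the path, the remaining configuration $\{v_1\}\cup(W\setminus\{v_0\})$ is still in \emph{general} position (the only shortest paths from $v_1$ into the clique go through the unoccupied $v_0$), so this graph has $\mob=\mobmv=b$ and realises only the diagonal case. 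You correctly anticipate that the upper bound $\mob(G)\leq a$ is the hard part, but you leave it entirely open.

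The paper's construction is quite different and much more economical: for $a=b$ take $K_a$, and for $a<b$ take the join $G(a,b)=K_2\vee\bigl((b-a)K_1\cup K_{a-1}\bigr)$, of order $b+1$. Here $\mobmv=b$ is immediate (all but one vertex is a mutual visibility set and a single swap move suffices), the lower bound $\mob\geq a$ comes from the clique $\{x_1,v_1,\dots,v_{a-1}\}$ with one-step excursions to the isolated vertices, and --- crucially --- the upper bound $\mob\leq a$ is not proved by a fiddly ``stuck configuration'' analysis at all but is imported from a known result on joins with $K_1$ (namely $\mob(H\vee K_1)\leq\omega(H)+1$, \cite[Theorem 4.4]{Cartesian}). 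To repair your proposal you would need either to find such a citable upper bound for your gadget or to supply the case analysis you defer; as written, the construction and its verification --- the heart of the proof --- are missing.
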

	\begin{proof}
		Trivially $\mob (G) = 1$ if and only if $\mobmv (G) = 1$, so assume that $a \geq 2$. If $a = b$, then $K_a$ suffices, so additionally we can take $a < b$. Consider the graph $G(a,b) = K_2 \vee ((b-a)K_1 \cup K_{a-1})$ of order $b+1$. We will denote the vertices of $(b-a)K_1 \cup K_{a-1}$ by $\{ u_1,\dots ,u_{b-a}\} \cup \{ v_1,\dots ,v_{a-1}\} $, where $u_1,\dots ,u_{b-a}$ are the $b-a$ isolated vertices and $v_1, \dots , v_{a-1}$ are the vertices of the clique $K_{a-1}$. Denote the additional two vertices by $x_1,x_2$. First observe that $V(G(a,b))-\{ x_2\} $ is a mobile mutual visibility set, as it is a mutual visibility set and the robot at $x_1$ can make the legal move $x_1 \move x_2$. As the graph is not complete, we must have $\mobmv (G(a,b)) \leq b$, so in fact $\mobmv (G(a,b)) = b$.
		
		The set $\{ x_1,v_1, \dots , v_{a-1}\} $ is a general position set, and the remaining vertices can be visited by the moves $x_1 \move x_2$ and $x_1 \move u_i$ for $1 \leq i \leq b-a$, so $\mob (G(a,b)) \geq a$. The upper bound $\mob (G(a,b)) \leq a$ follows immediately from~\cite[Theorem 4.4]{Cartesian}. This graph therefore has $\mob (G(a,b)) = a$ and $\mobmv (G(a,b)) = b$.
	\end{proof}
	The construction in Proposition~\ref{prop:realisation} is obviously smallest possible, although interestingly in general there appear to be many graphs with these parameters and minimum order.

	\section{Bounds on mobile position by clique number}\label{sec:clique number}
	
	In this section we first solve Problem~\ref{prob:mob gp vs clique num} by demonstrating a relationship between the clique number and the mobile general position number; in particular, we show that for graphs with fixed diameter the mobile general position number is $\Omega (\omega (G))$. We then use this result to answer Problem~\ref{prob:join}. Finally we give a lower bound for the mobile mutual visibility number in terms of the clique number.
	
	\begin{theorem}\label{thm:mobgp vs clique number} The completely mobile general position number of any non-complete graph $G$ with clique number $\omega(G) \geq 3$ is bounded below by 
		\[ \mob (G) \geq \mob ^*(G) \geq 1+\left \lceil \frac{\omega (G)}{\diam (G)} \right \rceil -\theta (s),\] where $\omega (G) = q\diam (G)+s$, where $0 \leq s \leq \diam (G)-1$, and $\theta (s) = 1$ if $s = 1$ and zero otherwise. The bound is tight.
	\end{theorem}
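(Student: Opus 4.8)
Since a swarm in which every robot visits every vertex certainly visits every vertex, $\mob(G)\ge\mob^{*}(G)$ is immediate, so the work lies in the lower bound on $\mob^{*}(G)$; I may assume $G$ connected, so $d:=\diam(G)\ge 2$. Put $k:=1+\lceil\omega/d\rceil-\theta(s)$; a direct check gives $k=\lfloor(\omega-2)/d\rfloor+2$, and since $d\ge 2$ one has $2\le k\le\omega-1$. The plan is to fix a maximum clique $Q$ of $G$, place robots on any $k$ of its vertices (a general position set, since every clique is one), and prove this configuration completely mobile; by the analogue of Lemma~\ref{lem:mobility} for completely mobile configurations, obtained from the same reversibility argument, it suffices to produce, for each robot $R$ and each vertex $v$, a legal move sequence from the initial configuration ending with $R$ at $v$. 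If $v\in Q$ this is routine, as the $k\le\omega-1$ robots can be permuted freely within $Q$. So assume $v\notin Q$, let $\ell:=d(v,Q)\in\{1,\dots,d\}$, and fix a geodesic $x_0x_1\cdots x_\ell=v$ with $x_0\in Q$; then $x_1,\dots,x_\ell\notin Q$ and $d(x_j,Q)=j$.

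The structural heart is a nesting phenomenon. Since $d(x_j,w)\in\{j,j+1\}$ for all $w\in Q$, the clique splits as $Q=A_j\sqcup B_j$ with $A_j:=\{w\in Q:d(x_j,w)=j\}$, and a short argument ($w\in A_j\Rightarrow d(x_{j+1},w)\le j+1$ and $\ge d(x_{j+1},Q)=j+1$) gives $A_1\subseteq A_2\subseteq\cdots\subseteq A_\ell$, with moreover $A_d=Q$ whenever $\ell=d$. The local fact I need is: with $R$ at $x_j$ and the other $k-1$ robots on clique vertices, the swarm is in general position if and only if those robots all lie in $A_j$ or all lie in $B_j$ --- because a shortest $x_j,w$-path running through a robot at $w'\ne w$ would force $d(x_j,w)=d(x_j,w')+1$, impossible unless $w'\in A_j$ and $w\in B_j$. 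I would then drive $R$ from $x_0$ out to $v$ one edge at a time; while $R$ dwells at $x_j$ the other robots may be reshuffled within whichever of $A_j,B_j$ holds them. This calls for a schedule: an index $m$ such that the other robots can be held in the shrinking chain $B_1\supseteq\cdots\supseteq B_m$ during steps $1,\dots,m$ (each needing $|B_j|\ge k-1$, i.e.\ $|A_j|\le\omega-k+1$) and then moved into the growing chain $A_{m+1}\subseteq\cdots\subseteq A_\ell$, the switch at step $m$ requiring $|A_{m+1}\setminus A_m|\ge k-1$. Writing $a_j:=|A_j|$ and $a_0:=1$ (at step $0$ the other robots merely avoid $x_0$), the existence of a usable $m$ amounts to the arithmetic claim: for any $1=a_0\le a_1\le\cdots\le a_\ell\le\omega$ with $\ell\le d$ and $a_d=\omega$ whenever $\ell=d$, either $a_\ell\le\omega-k+1$, or some $m\in\{0,\dots,\ell-1\}$ has $a_1,\dots,a_m\le\omega-k+1$ and $a_{m+1}-a_m\ge k-1$.

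Proving this claim is the crux, and I would argue by contradiction: if it fails, let $m^{*}$ be the largest index with $a_{m^{*}}\le\omega-k+1$; then $m^{*}\le\ell-1$, so $a_{m^{*}+1}\ge\omega-k+2$, and since no earlier switch works every gap $a_{j+1}-a_j$ with $0\le j\le m^{*}$ is at most $k-2$, whence $a_{m^{*}+1}\le 1+(m^{*}+1)(k-2)$. If $m^{*}\le d-2$, then, writing $r:=(\omega-2)\bmod d\ge 0$ so that $d\,(k-2)=\omega-2-r$, this gives $a_{m^{*}+1}\le 1+(d-1)(k-2)=\omega-k+1-r<\omega-k+2$, a contradiction; and if $m^{*}=d-1$, then $\ell=d$, so $a_\ell=\omega$ and $a_\ell-a_{\ell-1}\ge\omega-(\omega-k+1)=k-1$, i.e.\ the claim in fact holds at $m=m^{*}$. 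Granting the claim, every single move prescribed by the schedule is legal by the local fact above, so $R$ reaches $v$; hence the initial configuration is completely mobile and $\mob^{*}(G)\ge k$. The remaining task is tightness: for each fixed $d\ge 2$ one must exhibit diameter-$d$ graphs of arbitrarily large clique number meeting the bound, which I would do by an explicit family (for $d=2$, suitable complete multipartite graphs) together with a direct proof of the matching upper bound $\mob(G)\le k$ for it. I expect the real obstacles to be this upper-bound verification for the extremal family, and the bookkeeping for the degenerate schedule cases --- when some $A_j$ or $B_j$ has exactly $k-1$ vertices, leaving no room to reshuffle --- which must be checked to survive the argument.
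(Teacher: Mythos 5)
Your argument is correct and is essentially the paper's proof in different clothing: the sets $A_{m+1}\setminus A_m$ and $B_\ell$ are exactly the classes of the distance-profile partition that the paper pigeonholes over, your gap-counting contradiction reproduces that pigeonhole (including the $s=1$ edge case arising because $x_0\in A_1$), and since $A_{m+1}\setminus A_m\subseteq B_j$ for $j\le m$ and $A_{m+1}\setminus A_m\subseteq A_j$ for $j\ge m+1$, your reshuffling schedule collapses to the paper's static stationing of the $k-1$ companion robots in a single class for the whole journey. The only substantive omission is tightness, which you defer to an unverified extremal family --- though the paper likewise only names its example ($K_{n-1}$ plus a vertex joined to $\lceil (n-1)/2\rceil$ of its vertices) without checking it.
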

	\begin{proof}
		Let $W$ be a largest clique of $G$. Set $\omega = \omega (G)$, $D = \diam (G)$ and $\theta = \theta (s)$. Start with $k = 1+\left \lceil \frac{\omega }{D} \right \rceil -\theta $ robots in $W$. The robots can move freely within $W$, so every vertex within $W$ can be visited. Let $v_r$ be any vertex at distance $r \geq 1$ from $W$ and $P = v_0,v_1,\dots ,v_r$ be a shortest path from $W$ to $v_r$, where $v_0 \in W$. Label each vertex of $W$ with the sequence $(d(w,v_1),d(w,v_2),\dots ,d(w,v_r))$. In such a sequence each entry is either equal to or one more than the preceding entry, the $i$-th term of each sequence is either $i$ or $i+1$ for $1 \leq i \leq r$, and if $r = D$, then the $r$-th entry is $r$. It is easily seen that there are $r+1$ possible sequences for $1 \leq r \leq D-1$ (for example, for $r = 4 < \diam (G)$ the sequences are $(1,2,3,4)$, $(2,2,3,4)$, $(2,3,3,4)$, $(2,3,4,4)$ and $(2,3,4,5)$) and $D$ sequences when $r = D$. Therefore there are always at most $D$ sequences.
		
		Hence by the Pigeonhole Principle there is a subset $W' \subseteq W$ of order $\geq \left \lceil \frac{\omega }{D} \right \rceil $ such that all vertices of $W'$ have the same sequence. If a subset $W'$ corresponding to a sequence other than $(1,\dots ,r)$ has order $\geq \left \lceil \frac{\omega }{D} \right \rceil $, station $\left \lceil \frac{\omega }{D}\right \rceil $ robots in $W'$ and one robot $R$ at $v_0$. Otherwise, we can suppose that each such subset has order $< \left \lceil \frac{\omega }{D}\right \rceil$. If $2 \leq s < D$ and each subset apart from the $(1,\dots ,r)$-subset has order $\leq q$, then the $(1,\dots ,r)$-subset has order $\geq q+s \geq \left \lceil \frac{\omega }{D} \right \rceil +1$. Similarly, if $s = 0$, then we can assume that each subset apart from the $(1,\dots ,r)$-subset has order $\leq q-1$, so that the $(1,\dots ,r)$-subset has order at least \[ qD - (D-1)(q-1) = \frac{\omega }{D}+D-1 \geq \left \lceil \frac{\omega }{D}\right \rceil +1.\] If $s = 1$, we can only conclude that the $(1,\dots ,r)$-subset has order $\geq \left \lceil \frac{\omega }{D}\right \rceil $. Thus in any case we can station all $k$ robots inside the $(1,\dots ,r)$-subset, with one robot $R$ at $v_0$. Now send $R$ along the path $P$. At each stage $R$ will be equidistant from the robots in $W'$ and so after each of the $r$ moves the robots will occupy a general position set. 
		
		It follows that $k$ robots can visit every vertex of the graph. Finally observe that since the robots can be permuted at the initial stage inside $W$, this configuration is in fact completely mobile. The bound is tight, as evidenced by the graph formed from a clique $K_{n-1}$ and adding a new vertex adjacent to $\left \lceil \frac{n-1}{2} \right \rceil $ vertices of $K_{n-1}$.
		
	\end{proof}
	
	To show that Theorem~\ref{thm:mobgp vs clique number} is tight for diameter three, for all $\omega \geq 3$ we construct a graph $G(\omega )$ with $\diam (G(\omega )) = 3$, clique number $\omega $, and $\mob{(G(\omega ))} = \left \lceil \frac{\omega }{3} \right \rceil + 1 - \theta(s)$, where $\omega = 3q+s$, $q \geq 1$, $0 \leq s \leq 2$. To form $G(\omega )$, we begin with a disjoint union $K_{3q+s} \cup K_3$. Label the vertices in $K_{3q+s}$ as $a_1, a_2, \dots, a_{3q+s}$ and the vertices in $K_3$ as $1,2,3$. To complete the construction, add edges between $1$ and $a_i$ for $1 \leq i \leq q+1$ and between $2$ and $a_i$ for $q+1 \leq i \leq 2q+1$, omitting edge $2a_3$ when $q=1$ and $s=0$. For example, $G(7)$ is shown on the right of Figure~\ref{fig:T4.1Counter}. It is not difficult to verify that these graphs have the required parameters.
	
	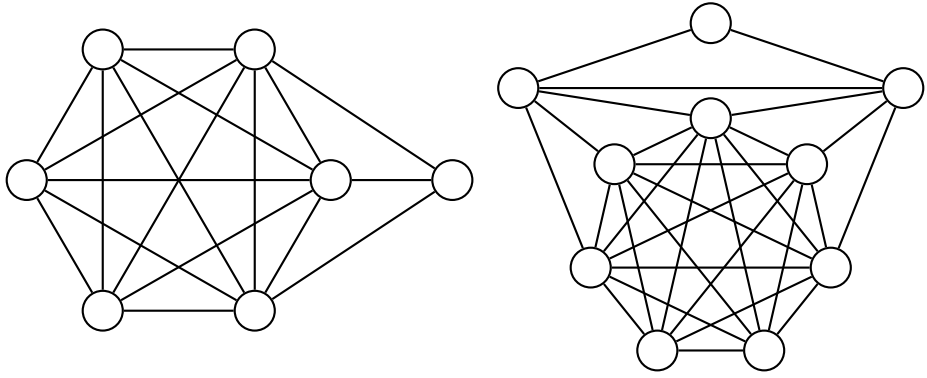
\begin{figure}
		\centering
		\begin{tikzpicture}[x=0.2mm,y=-0.2mm,inner sep=0.5mm,scale=1,thick,vertex/.style={circle,draw,minimum size=15}]
			
			\node at (0,-40) [vertex] (v1) {};
			\node at (-50,46.6) [vertex] (v2) {};
			\node at (-150,46.6) [vertex] (v3) {};
			\node at (-200,-40) [vertex] (v4) {};
			\node at (-150,-126.6) [vertex] (v5) {};
			\node at (-50,-126.6) [vertex] (v6) {};
			\node at (80,-40) [vertex] (v7) {};
			
			\path
			(v7) edge (v1)
			(v7) edge (v2)
			(v7) edge (v6)
			
			(v1) edge (v2)
			(v2) edge (v3)
			(v3) edge (v4)
			(v4) edge (v5)
			(v5) edge (v6)
			(v6) edge (v1)
			
			(v1) edge (v3)
			(v2) edge (v4)
			(v3) edge (v5)
			(v4) edge (v6)
			(v5) edge (v1)
			(v6) edge (v2)
			
			(v1) edge (v4)
			(v2) edge (v5)
			(v3) edge (v6)
			;
			
			\begin{scope}[yshift = 0cm, xshift = 5cm]
				
				\node at ($(0,0)!9!90:(9,0)$) [vertex] (a0) {};
				
				\foreach \j [count=\k, evaluate=\k as \i using {int(\k-1)}] in {1,...,6}{
					\node at ($(0,0)!9!\j*360/7+90:(9,0)$) [vertex] (a\j) {};
					\foreach \l in {0,...,\i}{
						\path (a\l) edge (a\j);
					}
				};
				
				\node at ($(0,0)!9!1*360/7+90:(18,0)$) [vertex] (b1) {};
				\node at ($(0,0)!9!6*360/7+90:(18,0)$) [vertex] (b2) {};
				
				\node at ($(0,0)!9!90:(16,0)$) [vertex] (c1) {};
				
				\path (b2) edge (a0);
				\path (b2) edge (a6);
				\path (b2) edge (a5);
				
				\path (b1) edge (a0);
				\path (b1) edge (a1);
				\path (b1) edge (a2);
				
				\path (b1) edge (c1);
				\path (b2) edge (c1);
				
				\path (b1) edge (b2);
				
			\end{scope}
		\end{tikzpicture}
		
		\caption{Families of graphs with diameters two and three that demonstrate tightness of Theorem~\ref{thm:mobgp vs clique number}.}
		\label{fig:T4.1Counter}
		
	\end{figure}
	
	We note also that as the random graph $G_{n,\frac{1}{2}}$ has diameter two and clique number $\geq 2\log (n)$ with high probability, Theorem~\ref{thm:mobgp vs clique number} implies the following lower bound.
	\begin{corollary}
		With high probability, $\mob (G_{n,\frac{1}{2}}) = \Theta (\log (n))$.
	\end{corollary}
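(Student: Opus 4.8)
The plan is to show that $\mob(G_{n,1/2})$ is sandwiched between two quantities of order $\log n$, each holding with high probability. Throughout I will use the standard facts that with high probability $G_{n,1/2}$ has diameter exactly $2$ (hence is non-complete) and satisfies $3 \le \omega(G_{n,1/2}) \le 3\log_2 n$.

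For the lower bound I would simply invoke Theorem~\ref{thm:mobgp vs clique number}. On the high-probability event that $\diam(G_{n,1/2}) = 2$ and $\omega(G_{n,1/2}) \ge 3$, the theorem applies with $D = 2$ and yields $\mob(G_{n,1/2}) \ge 1 + \lceil \omega(G_{n,1/2})/2 \rceil - \theta(s) \ge \lceil \omega(G_{n,1/2})/2 \rceil = \Omega(\log n)$, the correction term $\theta(s) \le 1$ being absorbed harmlessly.

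For the upper bound, note first that a mobile general position configuration is in particular a general position set, so $\mob(G) \le \gp(G)$ for every graph $G$; it therefore suffices to prove $\gp(G_{n,1/2}) = O(\log n)$ with high probability. Here I would use the following observation, valid for any graph $G$ with $\diam(G) = 2$: a set $S \subseteq V(G)$ is a general position set if and only if $G[S]$ is a disjoint union of cliques (equivalently, $G[S]$ has no induced $P_3$). Indeed, the only shortest paths of length exceeding $1$ in such a $G$ are paths $u$--$w$--$v$ with $uv \notin E(G)$, and one of these meets $S$ in three vertices precisely when $u,v,w \in S$; ruling all of them out is exactly the statement that $N(w) \cap S$ induces a clique for every $w \in S$, i.e.\ the cluster-graph condition. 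Consequently, if $S$ is a general position set of $G_{n,1/2}$ on the good event above, then each clique of $G_{n,1/2}[S]$ has at most $3\log_2 n$ vertices, and so $G_{n,1/2}[S]$ has at most $\tfrac32 |S| \log_2 n$ edges. On the other hand, for a fixed $s$-subset $S$ the number of edges of $G_{n,1/2}[S]$ is distributed as $\mathrm{Bin}\bigl(\binom{s}{2}, \tfrac12\bigr)$, with mean $\tfrac12\binom{s}{2}$. Choosing $s = C\log n$ for a sufficiently large absolute constant $C$, a Chernoff bound gives $\Pr\bigl[e(G_{n,1/2}[S]) \le \tfrac32 s\log_2 n\bigr] \le \exp(-\Omega(s^2))$, and a union bound over the at most $n^s = \exp(O(s\log n))$ choices of $S$ shows that with high probability $G_{n,1/2}$ has no general position set of size $C\log n$. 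Since any subset of a general position set is again a general position set, this gives $\gp(G_{n,1/2}) < C\log n$, which is the desired bound.

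The key idea is the cluster-graph characterisation of general position sets in diameter-$2$ graphs, which converts the upper-bound claim into a density statement about induced subgraphs of $G_{n,1/2}$; the delicate point is then the final union-bound step, since with $s$ of order $\log n$ both the Chernoff exponent $\Omega(s^2)$ and the entropy $O(s\log n)$ from the number of $s$-subsets are of order $(\log n)^2$, so the constant $C$ must be chosen large enough relative to the absolute constants hidden in the Chernoff estimate (and in $\omega \le 3\log_2 n$) for the negative exponent to dominate. Everything else — the whp control of diameter and clique number, the inequality $\mob \le \gp$, and the fact that the general position property is inherited by subsets — is routine.
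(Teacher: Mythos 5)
Your argument is correct, and it is actually more complete than what the paper records. For the lower bound you do exactly what the paper does: invoke Theorem~\ref{thm:mobgp vs clique number} on the high-probability event that $\diam(G_{n,\frac{1}{2}})=2$, using the concentration of the clique number. (One small slip: the ``standard facts'' you list only give $\omega \geq 3$, which does not yield $\lceil \omega/2\rceil = \Omega(\log n)$; you need the standard lower bound $\omega(G_{n,\frac{1}{2}}) \geq (2-o(1))\log_2 n$ w.h.p., which is precisely what the paper quotes, so this is a trivial fix.) For the upper bound the paper offers nothing at all --- the sentence preceding the corollary only claims a ``lower bound'' even though the statement asserts $\Theta(\log n)$ --- so your second half fills a genuine gap. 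Your route is sound: $\mob \leq \gp$, the observation that in a diameter-two graph a set $S$ is in general position if and only if $G[S]$ is $P_3$-free (hence a disjoint union of cliques, each of size at most $\omega(G) \leq 3\log_2 n$ w.h.p., so $e(G[S]) \leq \tfrac{3}{2}|S|\log_2 n$), and then a Chernoff-plus-union-bound computation showing that for $|S| = C\log n$ with $C$ large every $S$ induces far more edges than that. The competition between the $\exp(-\Omega(s^2))$ tail and the $\exp(O(s\log n))$ entropy is exactly as you describe, and choosing $C$ large enough resolves it. So: lower bound identical to the paper's, upper bound a correct self-contained addition that the paper leaves implicit.
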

	
	Whilst Theorem~\ref{thm:mobgp vs clique number} is tight for diameters two and three, it seems that it can be strengthened for fixed clique numbers. For example, for clique number five our formula only guarantees $\mob ^*(G) \geq 2$ for large diameter, whereas this can be improved to three.
	
	\begin{theorem}\label{the:clique num 5}
		For all graphs $G$ with $\omega(G) \geq 5$, $\mob(G) \geq \cmob(G) \geq 3$.
	\end{theorem}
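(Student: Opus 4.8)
Since every completely mobile general position configuration is in particular a mobile one, $\mob(G)\ge\cmob(G)$ is immediate, so the content is $\cmob(G)\ge 3$ for non-complete $G$ with $\omega(G)\ge 5$. The plan is to fix a clique $W=\{a_1,\dots,a_5\}$ of $G$ and to show that three robots placed on $a_1,a_2,a_3$ form a completely mobile general position set. Because the robots can be permuted freely inside $W$, by Lemma~\ref{lem:mobility} it is enough to show that for an arbitrary vertex $v$ some robot can be legally walked to $v$ with the swarm staying a general position set throughout. So fix $v$, put $r=d(v,W)$, and pick a geodesic $P\colon v_0,v_1,\dots,v_r=v$ with $v_0\in W$; relabel so $v_0=a_1$. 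Exactly as in the proof of Theorem~\ref{thm:mobgp vs clique number}, for $w\in W$ the word $(d(w,v_1),\dots,d(w,v_r))$ has $i$-th entry $i$ or $i+1$, increases by exactly $1$ from the step where it first attains its smaller value onwards, and equals $r$ in position $r$ when $r=\diam(G)$; thus each $w\in W$ is determined by a single ``switch point'', and the words of $a_1,\dots,a_5$ form a chain under the entrywise order.

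If two of $a_2,\dots,a_5$ share a word, I would put the two escort robots on that pair (they are adjacent and equidistant from every $v_i$), start the third robot at $v_0$, and walk it along $P$ to $v$, precisely as in Theorem~\ref{thm:mobgp vs clique number}. Since $a_2,\dots,a_5$ realise at most $\min\{r+1,\diam(G)\}$ distinct words, this disposes of every target when $\diam(G)\le 3$, and of many targets otherwise. The remaining situations — when $a_2,\dots,a_5$ have four distinct words, i.e.\ either all five vertices of $W$ have distinct words, or only the word of $v_0$ is repeated — force $\diam(G)\ge 4$ and are the real content. Here escorts kept inside $W$ provably do not suffice: on each stretch strictly between consecutive switch points the vertices of $W$ fall into those at distance $m$ from $v_m$ and those at distance $m+1$, so every escort pair has one of these two ``types'', and converting a same-type pair into the other by a single legal move would require an intermediate mixed pair, that is, two adjacent vertices whose distances to the mover differ by $1$ — never a general position set.

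To break this deadlock I plan to let one escort leave $W$, at a moment when this is a legal same-type move, and park it as an ``anchor'' slightly ahead of the clique — morally, on the vertex realising a shortcut of $a_2$ to $v_{p_2}$, where $p_2$ is the second switch point — so that the escort pair formed by one vertex of $W$ together with this off-clique anchor stays in general position with the mover across the critical band; from there the mover can be cruised along $P$ to $v$. Complete mobility then follows because all three robots begin in $W$. The main obstacle is exactly this hand-over step: one must check that the switch-point structure of $G$ always supplies a suitable off-clique anchor and that every intermediate configuration during the hand-over remains a general position set. This is also precisely where having five clique vertices, rather than four, is needed — with $\omega(G)=4$ the analogous hand-over genuinely fails — so the threshold $\omega(G)\ge 5$ is the correct one.
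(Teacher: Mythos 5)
Your setup and your treatment of the easy cases coincide with the paper's proof: three robots in a maximum clique $W$, the distance-word/switch-point analysis along a geodesic $P$ from $v_0\in W$ to the target $v$, dispatch of the case $|N(v_1)\cap W|\ge 3$ and of the case where two vertices of $W-N(v_1)$ share a word, and the observation that $\omega(G)\ge 5$ forces the only remaining situation to be three vertices $w_1,w_2,w_3\in W-N(v_1)$ with distinct switch points $2\le j(w_1)<j(w_2)<j(w_3)$. Your diagnosis that escorts confined to $W$ cannot cross the band between consecutive switch points, and that the fix is to park one escort on an off-clique ``anchor'' $w_2'$ realising the shortcut from $w_2$ to $v_{\ell}$ with $\ell=j(w_2)$, is exactly the paper's idea.

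However, you explicitly leave the hand-over verification as ``the main obstacle,'' and that verification is the actual content of the theorem, so as written this is a genuine gap rather than a complete proof. Concretely, what has to be checked (and what the paper checks) is: (i) with $R_1,R_2,R_3$ at $v_0,w_2,w_3$ the mover reaches $v_{\ell-1}$ legally because $d(w_2,v_i)=d(w_3,v_i)=i+1$ for $i\le \ell-1$; (ii) the move $w_2\move w_2'$ is legal, using $d(w_3,w_2')=2$ and $\ell-1\le d(w_2',v_{\ell-1})\le\ell=d(w_3,v_{\ell-1})$; and (iii) --- the step your sketch does not anticipate --- the anchor strategy can \emph{fail}: after parking $R_2$ at $w_2'$ you still need to replace $w_3$ by some $w^*\in\{v_0,w_1\}$ non-adjacent to $w_2'$ before cruising $R_1$ from $v_{\ell-1}$ to $v$, and such a $w^*$ need not exist. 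When $w_2'$ is adjacent to both $v_0$ and $w_1$, the paper abandons the anchor entirely and instead restarts with $R_1$ at $w_2$ and escorts at $w_1,v_0$, routing $R_1$ along the shortest $w_2,v_\ell$-path $Q$ and then along the tail of $P$. This dichotomy is not a routine detail one can wave at; without it the argument does not close. So: right strategy, same as the paper's, but the decisive case analysis is missing.
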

	\begin{proof}
		Let $G$ be any graph with $\omega(G) \geq 5$ and let $W \subseteq V(G)$ be any largest clique. Begin by stationing three robots $R_1, R_2, R_3$ in $W$. Clearly, the robots can move freely through $W$ whilst remaining in general position, and can permute their positions inside $W$, so our proof will imply that the three robots are complete mobile. 
		
		If $V(G) = W$, then we are done. Otherwise, let $v \in V(G) - W$ and let $P$ be a shortest path $v_0,v_1, \dots, v_r = v$ from $W$ to $v$, where $v_0 \in W$. As observed in the preceding proof, for all $w \in W$ and $i \in [r]$, $d(w,v_i) = i$ or $i+1$. For each $w \in W$, define $j(w)$ to be the smallest $i$ in the range $1 \leq i \leq r$ such that $d(w, v_i) = i$ if such an integer exists and $\infty$ otherwise. If $|N(v_1) \cap W| \geq 3$, we can station $R_1, R_2, R_3$ in $N(v_1) \cap W$ and move $R_1$ along $P$ to reach $v$, whilst the robots remain in general position at each step. If there are distinct vertices $w_1,w_2 \in W-N(v_1)$ with $j(w_1) = j(w_2)$, then we can station $R_1, R_2, R_3$ at $v_0,w_1,w_2$, respectively and move $R_1$ along $P$ to reach $v$, and again the robots remain in general position.
		
		In any other case, we may choose $w_1,w_2,w_3 \in N(v_1)-W$ such that $2 \leq j(w_1) < j(w_2) < j(w_3)$. Set $\ell = j(w_2)$.  Position $R_1, R_2, R_3$ at $v_0,w_2,w_3$, respectively. For $1 \leq i \leq \ell-1$, $d(w_2,v_i) = d(w_3, v_i) = i+1$, so we may move $R_1$ along $P$ to $v_{\ell -1}$ whilst maintaining general position. 
		
		Denote a fixed shortest $w_2,v_{\ell }$-path by $Q$ and let $w_2w_2'$ be the first edge of $Q$. We must have $w_2' \not \in W$ and $d(w_2',v_{\ell }) = \ell - 1$. Since $d(w_3, v_\ell) = \ell + 1$, it follows that $d(w_3,w_2') = 2$. Also, $\ell - 1 \leq d(w_2', v_{\ell -1}) \leq d(w_2', v_\ell) + d(v_\ell, v_{\ell -1}) = \ell $ and $d(w_3, v_{\ell -1}) = \ell$, so $R_2$ can make the move $w_2 \move w_2'$. 
		
		If $w_1$ and $v_0$ are both neighbours of $w_2'$, then we could have positioned $R_1,R_2,R_3$ at $w_2,w_1,v_0$ respectively, and set $R_1$ along the path $Q$ and then the section of $P$ from $v_{\ell }$ to $v$, whilst the robots remain in general position. Hence, we can assume that at least one of the vertices $\{ v_0,w_1\} $ is not adjacent to $w_2'$; call this vertex $w^*$. For $\ell-1 \leq i \leq r$, $d(w^*,v_i) = i, d(w^*,w_2') = 2$ and for $\ell \leq i \leq r$, $d(w_2', v_i) = i-1$. Therefore, with $R_1,R_2,R_3$ at $v_{\ell -1},w_2',w_3$ respectively, we see that $R_3$ can make the move $w_3 \move w^*$ and then $R_1$ can proceed along $P$ from $v_{\ell -1}$ to $v$ whilst maintaining general position.
	\end{proof}

	\begin{figure}
		
		\centering
		
		\begin{tikzpicture}[x=0.2mm,y=-0.2mm,inner sep=0.5mm,scale=1,thick,vertex/.style={circle,draw,minimum size=15}]
			
			\node at ($(0,0)!9!90:(9,0)$) [vertex] (a0) {};
			
			\foreach \j [count=\k, evaluate=\k as \i using {int(\k-1)}] in {1,...,4}{
				\node at ($(0,0)!9!\j*360/5+90:(9,0)$) [vertex] (a\j) {};
				\foreach \l in {0,...,\i}{
					\ifnum \l>0
					\path (a\l) edge (a\j);
					\else
					\ifnum \j>1
					\path (a\l) edge (a\j);
					\fi
					\fi
				}
			};
			
			\node at ($(0,0)!9!90:(18,0)$) [vertex] (b1) {};
			\node at ($(0,0)!9!360/5+90:(18,0)$) [vertex] (b2) {};
			
			\node at ($(0,0)!9!360/10+90:(24,0)$) [vertex] (c1) {};
			
			\path (c1) edge (b1);
			\path (c1) edge (b2);
			
			\path (b1) edge (b2);
			\path (b1) edge (a0);
			\path (b1) edge (a1);
			\path (b1) edge (a4);
			
			\path (b2) edge (a0);
			\path (b2) edge (a1);
			\path (b2) edge (a2);
			
		\end{tikzpicture}
		\caption{A graph $G$ with a subgraph $K_5^-$ for which $\mob{G =} 2$.}
		\label{fig:K5-mobgp2}
	\end{figure}
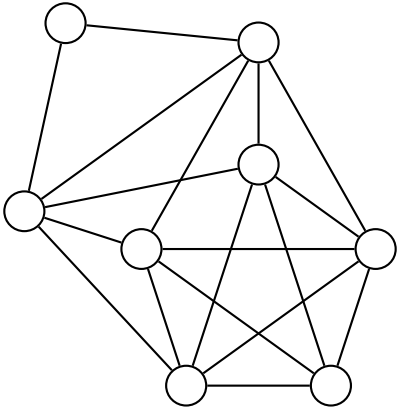
	
	Let $T(n,r)$ be the $r$-partite Tur\'{a}n graph with order $n$ and $t(n,r)$ be the size of $T(n,r)$. Theorem~\ref{the:clique num 5} immediately shows that the largest size of a graph with mobile general position number two is at most $t(n,4)$. However, based on computations up to order nine~\cite{Erskine}, we make the following conjecture.
	
	\begin{conjecture}\label{conj:maxsize mobile num 2}
		For $n \geq 3$, $T(n,3)$ is the unique graph with order $n$, mobile general position number two and maximum size. 
	\end{conjecture}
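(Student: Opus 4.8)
The plan is a Tur\'{a}n-type extremal argument, split according to whether $G$ contains a copy of $K_4$. The $K_4$-free case is immediate: if $\mob(G)=2$ and $G$ is $K_4$-free, then by Tur\'{a}n's theorem $|E(G)|\le t(n,3)$, with equality only for $G\cong T(n,3)$. So the first point is to verify that $T(n,3)$ is itself a valid competitor, i.e.\ that $\mob(T(n,3))=2$ for $n\ge4$. The inequality $\mob(T(n,3))\ge2$ is trivial, since any two-robot configuration is a general position set and a connected graph is traversable by two robots. For $\mob(T(n,3))\le2$ one describes the general position sets of a complete tripartite graph: any such set of size at least three is either a transversal (one vertex per part) or a subset of a single part, and in either case every robot is frozen --- any legal move would place two robots in a common part together with a third robot lying on a shortest (length-two) path between them, violating general position. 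A frozen configuration on fewer than $n$ vertices cannot visit every vertex, so $\mob(T(n,3))\le2$.

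It remains to prove that if $\mob(G)=2$ and $G$ contains a $K_4$ then $|E(G)|<t(n,3)$; combined with the uniqueness in Tur\'{a}n's theorem this settles the conjecture. Here I would first extract structure from $\mob(G)=2$. By Theorem~\ref{the:clique num 5} we have $\omega(G)\le4$, so $\omega(G)=4$; fix a $4$-clique $W$. Applying Theorem~\ref{thm:mobgp vs clique number} with $\omega=4$ and $D=\diam(G)$ shows that $D=2$ would force $\mob(G)\ge 1+\lceil4/2\rceil-\theta(0)=3$, a contradiction, so $\diam(G)\ge3$. The second ingredient is the escort technique from the proofs of Theorems~\ref{thm:mobgp vs clique number} and~\ref{the:clique num 5}: position three robots on $W$, and to reach a target $z$ send one robot out along a shortest path $P=v_0,v_1,\dots,v_r$ with $v_0\in W$, keeping the other two robots fixed on vertices of $W$ that share the distance sequence $(d(\cdot,v_1),\dots,d(\cdot,v_r))$; since there are at most $\min(r+1,D)$ sequences, two of the four vertices of $W$ always share one when $d(z,W)\le2$, and general position is maintained throughout. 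With appropriate care at distance two (choosing the right intermediate configuration and using preliminary moves in the spirit of the $w_2\move w_2'$ step of the proof of Theorem~\ref{the:clique num 5}), this shows that every vertex within distance two of $W$ is reachable; since $\mob(G)=2$, it follows that every $4$-clique of $G$ is at distance at least three from some vertex, and --- pushing the analysis further --- that the adjacencies of vertices near any $K_4$ are severely constrained.

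Converting this local sparseness around every $K_4$ into the global bound $|E(G)|<t(n,3)$ is the crux, and the reason the statement is posed as a conjecture. The intended route is to classify the possible attachment patterns of $V(G)\setminus W$ to a $4$-clique $W$ that leave three robots trapped --- not only for the base configuration on $W$, but for all configurations reachable from it by a bounded number of moves --- and then to bound the number of edges incident to, or within bounded distance of, each $K_4$. The main obstacle is that a $K_4$, indeed a $K_5^-$ (see the graph of Figure~\ref{fig:K5-mobgp2}), is genuinely compatible with $\mob(G)=2$, so the argument must be quantitative rather than forbidding $K_4$ outright; the case analysis of how a $\mob=2$ graph can surround a $K_4$ while immobilising a third robot is delicate and does not obviously reduce to anything short. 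A strengthening that should be more robust to this analysis, and that is consistent with the computational evidence, is that $\mob(G)=2$ together with $\omega(G)=4$ force $|E(G)|\le t(n,3)-cn$ for an absolute constant $c>0$.
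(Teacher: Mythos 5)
This statement is a conjecture: the paper offers no proof of it, only computational evidence up to order nine and the remark that it would follow from (the false) claim that every graph containing $K_4$ has mobile general position number at least three. So there is no ``paper proof'' to match your argument against, and your proposal, by your own admission, does not close the problem either. The parts you do carry out are sound: the $K_4$-free case is indeed immediate from Tur\'{a}n's theorem once one checks $\mob(T(n,3))=2$, and your description of the general position sets of $T(n,3)$ (transversals or subsets of a single part, both frozen) is correct, as is the deduction $\omega(G)=4$ from Theorem~\ref{the:clique num 5} and $\diam(G)\geq 3$ from Theorem~\ref{thm:mobgp vs clique number}. These observations are genuine partial progress beyond what the paper records.

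However, one intermediate claim in your sketch is not merely unproved but false, and it is exactly the obstruction the paper's Figure~\ref{fig:K5-mobgp2} is designed to exhibit. You assert that the escort technique shows every vertex within distance two of a $4$-clique $W$ is reachable by three robots stationed in $W$. In the graph of that figure, every vertex lies within distance two of the $4$-clique $\{a_1,a_2,a_3,a_4\}$ (the vertex $c_1$ is at distance exactly two, all others at distance at most one), yet the graph has mobile general position number two; so the claimed lemma cannot hold. The escort argument genuinely needs five clique vertices: with only four, the pigeonhole over the (up to three) distance sequences at $r=2$ can fail to produce two usable escorts once $v_0$ is discounted, and the repair moves $w_2\move w_2'$ and $w_3\move w^*$ in the proof of Theorem~\ref{the:clique num 5} rely on having a spare clique vertex to fall back on. Consequently your proposed structural dichotomy (``every $4$-clique is far from some vertex'') is unavailable, and the passage from local constraints around each $K_4$ to the global edge bound $|E(G)|<t(n,3)$ remains entirely open. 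Your closing suggestion of a stability-type strengthening $|E(G)|\leq t(n,3)-cn$ is a reasonable research direction, but it is a reformulation of the difficulty, not a resolution of it.
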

	
	Conjecture~\ref{conj:maxsize mobile num 2} would follow immediately if Theorem~\ref{the:clique num 5} if any graph containing $K_4$ has mobile general position number at least three. However, this is false; in fact the graph in Figure~\ref{fig:K5-mobgp2} contains a $K_5^-$.
	
	We can now apply Theorem~\ref{thm:mobgp vs clique number} to answer Problem~\ref{prob:join} in the negative.
	
	\begin{corollary}
		Let $G$ be any graph with $\omega(G) \geq 4$. Then $\mob(G \vee K_1) \geq 3$.
	\end{corollary}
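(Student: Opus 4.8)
The plan is to recognise $H := G \vee K_1$ as a graph to which the clique-number bounds of this section apply directly, after computing its basic parameters. The apex vertex $z$ of the $K_1$ factor is adjacent to every vertex of $G$, so a largest clique of $G$ together with $z$ is a clique of $H$, while every clique of $H$ has all but at most one of its vertices lying in $G$; hence $\omega(H) = \omega(G) + 1$, and in particular $\omega(H) \geq 5$ under the hypothesis $\omega(G) \geq 4$.

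Next I would dispose of the trivial case: if $G = K_n$ then $H = K_{n+1}$, whose whole vertex set is trivially a completely mobile general position set, so $\mob(H) = n + 1 \geq 5 \geq 3$. Otherwise $H$ is not complete, and $\diam(H) = 2$ because $z$ is universal and any two non-adjacent vertices of $G$ are joined by a path of length $2$ through $z$. Now Theorem~\ref{thm:mobgp vs clique number} applies to the non-complete graph $H$ with $\omega(H) \geq 5$ and $\diam(H) = 2$: writing $\omega(H) = 2q + s$ with $s \in \{0,1\}$, the bound gives $\mob(H) \geq 1 + \lceil \omega(H)/2 \rceil - \theta(s) \geq 1 + 3 - 1 = 3$, since $\lceil \omega(H)/2 \rceil \geq 3$ and $\theta(s) \leq 1$. (One could equally invoke Theorem~\ref{the:clique num 5} for $H$ directly, bypassing the case split.)

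There is no real obstacle here: all the work lies in the two clique-number theorems already established, and the corollary merely observes that joining with $K_1$ raises the clique number into the range where those theorems force at least three mobile robots. The only points needing care are the identity $\omega(G\vee K_1)=\omega(G)+1$ and, along the route through Theorem~\ref{thm:mobgp vs clique number}, the completeness and diameter bookkeeping for the join. This resolves Problem~\ref{prob:join} negatively, since $\mob(G \vee K_1) \geq 3 > 2$ whenever $\omega(G) \geq 4$.
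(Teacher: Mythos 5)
Your proposal is correct, and the parenthetical remark at the end --- invoking Theorem~\ref{the:clique num 5} directly after observing $\omega(G \vee K_1) \geq 5$ --- is precisely the paper's proof, which is a one-line deduction. Your primary route, through Theorem~\ref{thm:mobgp vs clique number} with the observations that $G \vee K_1$ has diameter $2$ when non-complete and that the complete case is trivial, is also valid: the arithmetic $1 + \lceil \omega(H)/2\rceil - \theta(s) \geq 1 + 3 - 1 = 3$ checks out for $\omega(H) \geq 5$. The only thing this longer route costs you is the completeness and diameter bookkeeping, which Theorem~\ref{the:clique num 5} makes unnecessary since it applies to all graphs with clique number at least $5$; the only thing it buys is independence from Theorem~\ref{the:clique num 5}. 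Either way the corollary stands, and your closing remark about Problem~\ref{prob:join} matches the paper's intent.
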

	\begin{proof}
		This follows immediately from Theorem~\ref{the:clique num 5} by noting that for any graph $G$ with $\omega(G) \geq 4$, $\omega(G \vee K_1) \geq 5$.
	\end{proof}
	
	Finally we derive a bound for the mobile mutual visibility number in terms of the clique number.

	\begin{theorem}\label{thm:cliquevsmobmvnum}
		For any graph $G$, the mobile mutual visibility number is at least $ \mobmv (G) \geq \omega (G)$ and the completely mobile mutual visibility number is at least $\mobmv ^*(G) \geq \omega (G)-1$.
	\end{theorem}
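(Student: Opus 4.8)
The plan is to exhibit, for any graph $G$ with clique number $\omega = \omega(G)$, a mobile mutual visibility configuration of size $\omega$, and a completely mobile one of size $\omega - 1$. The natural candidate is to station robots inside a largest clique $W$ of $G$. Since $W$ is complete, any subset of its vertices is a mutual visibility set (indeed a general position set), and within $W$ the robots may move freely and permute arbitrarily; this immediately handles every vertex of $W$ and also establishes the ``completely'' property for whatever configuration we end up using, provided all the remaining moves can be reversed.

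For the bound $\mobmv(G) \geq \omega$, place all $\omega$ robots on $W$. To visit a vertex $v \notin W$, take a shortest path $P = v_0, v_1, \dots, v_r = v$ from $W$ to $v$ with $v_0 \in W$, and simply slide the robot at $v_0$ along $P$. The key observation is that when this single robot sits at $v_i$, the configuration consists of $v_i$ together with $W - \{v_0\}$; I need each pair to see each other along some shortest path. For two vertices $w, w' \in W - \{v_0\}$ this is immediate since $w \sim w'$. For a pair $\{v_i, w\}$ with $w \in W - \{v_0\}$: by the sequence argument already used in the proof of Theorem~\ref{thm:mobgp vs clique number}, $d(w, v_i) \in \{i, i+1\}$, and one checks that there is a shortest $v_i$--$w$ path whose interior avoids $W - \{v_0\}$ — for instance, follow $P$ backwards from $v_i$ to $v_1$ (whose interior vertices lie outside $W$) and then take the edge or length-two connection from $v_1$ to $w$; the only $W$-vertex that could be hit internally is $v_0$, which is not a robot in this configuration. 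So every intermediate configuration is a mutual visibility set, and by Lemma~\ref{lem:mobility} we are done; reversibility of the slide gives complete mobility as well, but we only claimed $\omega - 1$ for the starred version, so even the weaker statement suffices here.

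For $\mobmv^*(G) \geq \omega - 1$, the subtlety is that with $\omega$ robots, after sliding one robot off $W$ the vacated vertex $v_0$ cannot then be visited by a \emph{different} robot without more care, so ``completely mobile'' is not automatic at size $\omega$. Instead place only $\omega - 1$ robots on $W$, leaving one vertex $w^* \in W$ empty, and argue as above: to reach $v \notin W$, choose the shortest path $P$ from $W$ so that $v_0 = w^*$ is \emph{not} used — possible since $W$ has at least two vertices and we may pick the path to start at any vertex of $W$ closest to $v$; if the unique closest vertex happens to be $w^*$, first make a legal move inside $W$ to shift which vertex is empty. Then the same sliding argument works with the ``missing'' clique vertex playing the role of $v_0$, and since every such move sequence is reversible and we may freely permute the $\omega - 1$ robots on $W$ at the start, the configuration is completely mobile.

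I expect the main obstacle to be the bookkeeping in the mutual-visibility check for the pairs $\{v_i, w\}$: one must confirm that a shortest $v_i$--$w$ path can always be routed to avoid the other robots on $W - \{v_0\}$ (respectively $W - \{w^*\}$), handling both cases $d(w,v_i) = i$ and $d(w,v_i) = i+1$, and making sure the concatenation ``$P$ backwards from $v_i$ to $v_1$, then $v_1$ to $w$'' is genuinely a shortest path of the right length. This is a short case analysis analogous to the one already carried out in Theorem~\ref{thm:mobgp vs clique number}, so I anticipate no real difficulty, only the need to state it cleanly.
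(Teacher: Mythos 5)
Your proposal is correct and follows essentially the same route as the paper's proof: station the robots on a maximum clique $W$, slide the robot at $v_0$ along a shortest path $P$ out of $W$, verify mutual visibility by the two cases $d(w,v_i)\in\{i,i+1\}$ (routing through the now-vacant $v_0$ in the latter case), and drop to $\omega(G)-1$ robots so that the spare clique vertex allows permutation for the completely mobile bound. One small repair to your ``for instance'' route: when $d(w,v_i)=i$ but $d(w,v_1)=2$, the detour through $v_1$ (and likewise through $v_0$) has length $i+1$ and is not shortest; the clean observation, which is what the paper implicitly uses, is that $d(v_i,W)=i$, so every internal vertex of any shortest $w$--$v_i$ path of length $i$ is at distance less than $i$ from $v_i$ and hence lies outside $W$, meaning \emph{any} shortest $w$--$v_i$ path already avoids all the other robots.
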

	\begin{proof}
		Let $W$ be a largest clique of $G$. We start with the robots in $W$, which is obviously a mutual visibility set. If $G$ is complete, we are done, otherwise let $u$ be any vertex at distance $r \geq 1$ from $W$ and $P$ be a shortest path $v_0,v_1,\dots ,v_r=u$, $v_0 \in W$ from $W$ to $u$. We send the robot $R$ at $v_0$ along the path $P$ until it reaches $u$. When $R$ reaches $v_k$, $k \leq r$, any vertex $w$ of $W-\{ v_0\} $ is either at distance $k$ from $v_k$, in which case there is a shortest $v_k,w$-path not passing through another vertex of $W-\{v_0\} $, or is at distance $k+1$ from $v_k$, in which case the edge $wv_0$ followed by the path $P$ is a shortest path not passing through a third robot. Therefore the configuration of robots is mutually visible at each stage. 
		
		If there are just $\omega (G)-1$ robots in $W$, then we can permute their positions using the spare vertex of $W$, then follow the same procedure as described above, so that such a configuration is completely mobile.
	\end{proof}
	
	Trivially the graph formed by adding a leaf to some or all of the vertices of a complete graph shows that both of these bounds are tight.

	\section{Mobile mutual visibility of joins}\label{sec:join}
	
	Mobile mutual visibility sets of the join $G \vee H$ of two graphs $G$ and $H$ were studied in~\cite{dettlaff2024}, in which it was shown that if $G$ and $H$ are both non-complete graphs with order at least two, then \[ n (G)+ n(H)-3 \leq \mobmv (G \vee H) \leq n(G)+n(H)-1.\] In this section we will sharpen the lower bound, and also find the exact answer for all joins. First we will need a classification of the graphs with order $n$ and mobile mutual visibility number $n-1$. Graphs with mutual visibility number equal to $n-1$ were classified by Di Stefano in~\cite{DiStefano}.
	
	\begin{definition}
		For a graph $G$ with order $n$, a subset $U \subseteq V(G)$ of cardinality $k$ is a \emph{$k$-hub} if any pair of non-adjacent vertices $w_1,w_2 \in V(G)-U$ has a common neighbour in $U$. We will call a $1$-hub simply a \emph{hub}.
	\end{definition}
	
	\begin{lemma}[\cite{DiStefano}]\label{lem:hub}
		A subset $S$ of $n-1$ vertices of a graph with order $n$ is a mutual visibility set if and only if the only vertex of $G$ outside of $S$ is a hub.
	\end{lemma}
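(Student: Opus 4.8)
The plan is to prove the two implications directly, using the single structural fact that drives everything: because $|S| = n-1$, there is exactly one vertex $v$ of $G$ outside $S$, so any path whose intersection with $S$ consists of just its two endpoints can have at most one internal vertex, and that vertex must be $v$.

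First I would handle the forward direction. Assume $S$ is a mutual visibility set and let $w_1, w_2$ be any two non-adjacent vertices of $V(G) - \{v\} = S$. By definition there is a shortest $w_1,w_2$-path $P$ with $V(P) \cap S = \{w_1,w_2\}$. Since $w_1 \not\sim w_2$ we have $d(w_1,w_2) \ge 2$, so $P$ has at least one internal vertex, and every internal vertex of $P$ lies outside $S$; as $v$ is the only such vertex, $P$ must be the path $w_1, v, w_2$. Hence $v$ is adjacent to both $w_1$ and $w_2$, so $v$ is a common neighbour of $w_1$ and $w_2$ inside $\{v\}$, i.e.\ $v$ is a hub.

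For the converse, assume $v$ is a hub and take any $u,w \in S$. If $u$ and $w$ are adjacent, the edge $uw$ is a shortest path meeting $S$ only in $\{u,w\}$. If they are non-adjacent, then $u, w \in V(G) - \{v\}$ are a non-adjacent pair, so by the hub property $v$ is adjacent to both of them; then $u, v, w$ is a path of length $2$, which is shortest since $d(u,w) \ge 2$, and it meets $S$ only in $\{u,w\}$ because its unique internal vertex $v$ lies outside $S$. Thus every pair of vertices of $S$ is mutually visible, so $S$ is a mutual visibility set.

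There is essentially no hard step here; the only point that needs a moment's care is the degenerate situation in which $S$ induces a complete graph, but then $v$ is vacuously a hub and $S$ is trivially a mutual visibility set, so both sides of the equivalence hold. I would also note in passing that the argument shows the shortest path realising visibility can always be taken to have length at most $2$, which is the structural reason Di Stefano's characterisation is so clean at this extreme value of $|S|$.
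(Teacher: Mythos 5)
Your proof is correct and is the standard argument behind Di Stefano's characterisation; the paper itself states this lemma as a citation without proof, and your two-line observation that a witnessing shortest path between non-adjacent vertices of $S$ must be exactly $w_1, v, w_2$ is precisely the right key step in both directions. Nothing is missing.
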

	
	\begin{theorem}\label{thm:mobmv=n-1}
		A non-complete graph $G$ has $\mobmv (G) = n-1$ if and only if it contains a pair of adjacent hubs.
	\end{theorem}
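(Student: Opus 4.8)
The plan is to reduce everything to the two characterisations already available: Lemma~\ref{lem:hub}, which identifies the mutual visibility sets of size $n-1$ as exactly the complements of hubs, and Lemma~\ref{lem:mobility}, which says a configuration is mobile as soon as each vertex can be reached individually. Throughout I would use the trivial bound $\mobmv(G) \le \mu(G) \le n-1$, which is valid precisely because $G$ is non-complete.

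For the backward direction I would assume $u$ and $v$ are adjacent hubs and simply test the configuration occupying $S = V(G) - \{v\}$. Because $v$ is a hub, $S$ is a mutual visibility set by Lemma~\ref{lem:hub}. The only vertex of $G$ not occupied by this configuration is $v$, so by Lemma~\ref{lem:mobility} it is enough to find a single legal move that puts a robot on $v$; the move $u \move v$ works, since $u \sim v$ and, $u$ being a hub, the resulting set $(S - \{u\}) \cup \{v\} = V(G) - \{u\}$ is again a mutual visibility set by Lemma~\ref{lem:hub}. Hence $S$ is a mobile mutual visibility set of size $n-1$, and together with the upper bound this gives $\mobmv(G) = n-1$. (The non-completeness hypothesis is genuinely needed here: in $K_n$ every vertex is vacuously a hub, yet $\mobmv(K_n) = n$.)

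For the forward direction I would take a mobile mutual visibility configuration $C$ with $n-1$ robots and let $v$ be the unique vertex outside $C$. Mobility provides a sequence of legal moves during which $v$ becomes occupied; I would focus on the \emph{first} move in that sequence placing a robot on $v$, say $u \move v$. The configuration immediately before this move is a mutual visibility set of $n-1$ vertices missing $v$, and the one immediately after is a mutual visibility set of $n-1$ vertices missing $u$; applying Lemma~\ref{lem:hub} to each of these two configurations shows that both $v$ and $u$ are hubs, and $u \sim v$ since the move took place, so $G$ contains a pair of adjacent hubs.

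Once the lemmas are invoked there is essentially no computation, so there is no serious obstacle; the only thing to be careful about is, in the forward direction, to single out the \emph{first} move occupying $v$ (so that the preceding configuration really does omit $v$), and to recall that Lemma~\ref{lem:mobility} is exactly what collapses the backward direction to checking one move rather than describing a full traversal of $G$.
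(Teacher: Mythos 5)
Your proposal is correct and follows essentially the same route as the paper: both directions hinge on Lemma~\ref{lem:hub}, the backward direction exhibits the configuration $V(G)-\{v\}$ and moves a robot across the edge between the two hubs, and the forward direction observes that with $n-1$ robots the first move must occupy the unique free vertex, so the vertex vacated and the vertex entered are both hubs and are adjacent. The only cosmetic difference is that you make the appeal to Lemma~\ref{lem:mobility} and the upper bound $\mobmv(G)\le\mu(G)\le n-1$ explicit, which the paper leaves implicit.
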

	\begin{proof}
		If there is a pair $u_1,u_2$ of adjacent hubs in $G$, then we can station robots on the vertices of $V(G)-\{ u_2\} $ and then move a robot back and forth between $u_1$ and $u_2$.
		
		For the converse, suppose that we have $n-1$ mutually visible robots that can traverse $G$. Let $u$ be the unoccupied vertex in the initial configuration of the robots; by Lemma~\ref{lem:hub}, $u$ is a hub. In the first move a robot at some vertex $v$ must make the move $v \move u$, at which point $v$ will be left unoccupied. Thus $v$ is also a hub and $u \sim v$. 
	\end{proof}
	
	Considering the placement of hubs, we can classify the mobile mutual visibility numbers of all joins.
	
	\begin{corollary}
		Let $G$ and $H$ be connected graphs of order at least two that are not both complete. Then $\mobmv (G \vee H) = n(G)+n(H)-1$ if and only if
		\begin{itemize}
			\item either $\mobmv (G) \geq n(G)-1$ or $\mobmv (H) \geq n(H)-1$, or
			\item $\mu (G) = n(G)-1$ and $\mu (H) = n(H)-1$.
		\end{itemize}
		Otherwise, $\mobmv(G \vee H) = n(G)+n(H)-2$. 
	\end{corollary}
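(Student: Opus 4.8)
The approach is to read off the answer from Theorem~\ref{thm:mobmv=n-1} after determining the hubs of a join. Since $G$ and $H$ are not both complete, $G \vee H$ is non-complete, so $\mobmv(G \vee H) \le n(G)+n(H)-1$, with equality if and only if $G \vee H$ contains a pair of adjacent hubs. I would first prove the uniform lower bound $\mobmv(G \vee H) \ge n(G)+n(H)-2$: take $S = V(G \vee H)-\{a,b\}$ for any $a \in V(G)$ and $b \in V(H)$. Deleting one vertex from each side always yields a mutual visibility set, because a pair of vertices non-adjacent inside $G$ is joined by a length-two path through the deleted vertex of $H$, and symmetrically. For mobility, by Lemma~\ref{lem:mobility} it suffices to reach each vertex from $S$; the only unoccupied vertices are $a$ and $b$, and since $G$ is connected of order at least two, $a$ has a neighbour $a' \in V(G)$, so $a' \move a$ is a legal move (its new unoccupied set $\{a',b\}$ again has one vertex from each side, hence is a mutual visibility set), and symmetrically for $b$.

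Next I would compute the hubs of $G \vee H$. The non-adjacent pairs of $G \vee H$ are exactly the pairs non-adjacent inside $G$ together with the pairs non-adjacent inside $H$; any vertex of $G$ is adjacent to every vertex of $H$, so for $x \in V(G)$ the hub condition on the pairs inside $H$ is automatic and the hub condition on the pairs inside $G$ is precisely that $x$ is a hub of $G$. Hence the hubs of $G \vee H$ are the hubs of $G$ together with the hubs of $H$. Two such hubs on opposite sides are automatically adjacent in $G \vee H$, while two on the same side are adjacent in $G \vee H$ iff they were adjacent there; thus $G \vee H$ has a pair of adjacent hubs iff $G$ has a pair of adjacent hubs, or $H$ has a pair of adjacent hubs, or each of $G$ and $H$ has at least one hub. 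By Theorem~\ref{thm:mobmv=n-1} applied to $G$ and $H$ (and the trivial fact that a complete graph of order at least two has all vertices as mutually adjacent hubs and $\mobmv = n$), ``$G$ has a pair of adjacent hubs'' is equivalent to $\mobmv(G) \ge n(G)-1$; by Lemma~\ref{lem:hub} (and the trivial fact that $\mu(G)=n(G)$ forces $G$ complete), ``$G$ has a hub'' is equivalent to $\mu(G) \ge n(G)-1$. Assembling these, and observing that whenever $\mu(G) = n(G)$ the first condition already holds for $G$, the existence of a pair of adjacent hubs in $G \vee H$ reduces to exactly the two bulleted conditions. Combining with the two bounds above gives the stated dichotomy.

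I expect the main obstacle to be the last bookkeeping step: one must carefully merge the three cases (adjacent hub pair inside $G$, inside $H$, straddling the two sides) and handle the complete-graph boundary cases so that the mixture of $\mobmv$- and $\mu$-conditions collapses exactly to ``either $\mobmv(G) \ge n(G)-1$ or $\mobmv(H) \ge n(H)-1$, or $\mu(G) = n(G)-1$ and $\mu(H) = n(H)-1$'' and nothing more. The hub computation for joins and the $(n-2)$-construction themselves are routine.
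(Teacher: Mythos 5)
Your proposal is correct and follows essentially the same route as the paper: reduce via Theorem~\ref{thm:mobmv=n-1} to the existence of a pair of adjacent hubs in $G \vee H$, observe that the hubs of the join are exactly the hubs of $G$ together with the hubs of $H$, translate through Lemma~\ref{lem:hub}, and separately establish the uniform lower bound $n(G)+n(H)-2$. Your treatment is in fact marginally more careful in two spots — your mobility move for the lower bound keeps one vacancy on each side at every step, and you explicitly dispose of the $\mu(G)=n(G)$ boundary case when collapsing to the stated conditions — but these are refinements of the same argument, not a different one.
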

	\begin{proof}
		Note that any configuration of robots on $G \vee H$ will be mutually visible if there is at least one unoccupied vertex in both $G$ and $H$. By Theorem~\ref{thm:mobmv=n-1}, the join $G \vee H$ will satisfy $\mobmv (G \vee H) = n(G)+n(H)-1$ if and only if $G \vee H$ contains a pair of adjacent hubs. Notice that a vertex of $G \vee H$ is a hub if and only if it is either a hub of $G$ or of $H$. Having two adjacent hubs in $G$ is equivalent to $\mobmv(G) \geq n(G)-1$, and similarly for $H$. Otherwise, having one hub in $G$ and one hub in $H$ is equivalent to $\mu (G) = n(G) -1$ and $\mu (H) = n(H)-1$.
		
		We now show that $\mobmv(G \vee H) \geq n(G)+n(H)-2$ for any graphs $G,H$ with order at least two, completing the proof. Begin by stationing $n(G) - 1$ robots in $G$ and $n(H) - 1$ robots in $H$. Move one of the robots in $G$ to the free vertex in $H$ and then do the same for $H$. All vertices have now been visited and at each step there was a vacant vertex in both $G$ and $H$.
	\end{proof}
	
	Finally we deal with joins in which one of the factors has order one. By $\mu _r(G)$ we denote the largest cardinality of a mutual visibility set $M$ of $G$ such that no two vertices of $M$ are at distance greater than $r$.
	
	\begin{theorem}
		For any graph $G$, the mobile mutual visibility number of the join $G \vee K_1$ is given by $\mobmv(G \vee K_1) = \mu _2(G) + 1$. 
	\end{theorem}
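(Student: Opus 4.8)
The plan is to exploit the fact that $G \vee K_1$ has diameter at most two. Write $x$ for the vertex of $K_1$, which is adjacent to every vertex of $G$; then any two vertices of $V(G)$ that are non-adjacent in $G$ lie at distance exactly two in $G \vee K_1$, with $x$ a common neighbour. The first thing I would record is that \emph{every} subset $S \subseteq V(G)$ is a mutual visibility set of $G \vee K_1$: a pair $u,v \in S$ non-adjacent in $G$ is joined by the shortest path $u,x,v$, which meets $S$ only in $\{u,v\}$. In particular, any configuration of robots none of which occupies $x$ is automatically a mutual visibility configuration, and every move of the form $x \move v$ with $v \in V(G)$ leads to a mutual visibility set.

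For the lower bound, let $M$ be a mutual visibility set of $G$ of maximum size subject to the condition that no two of its vertices are at distance more than two in $G$, so $|M| = \mu_2(G)$; place robots on $M \cup \{x\}$. This is a mutual visibility set of $G \vee K_1$: a pair $u,v \in M$ non-adjacent in $G$ satisfies $d_G(u,v) = 2$ by the diameter restriction, and since $M$ is mutually visible in $G$ there is a common neighbour $w \in V(G) - M$ of $u$ and $v$, so $u,w,v$ is a shortest $u,v$-path of $G \vee K_1$ avoiding $M \cup \{x\}$ (pairs involving $x$ being trivially fine). By Lemma~\ref{lem:mobility} it remains to reach every vertex; the vertices of $M$ and $x$ are occupied at the start, and for each $v \in V(G) - M$ the single move $x \move v$ visits $v$ and is legal because $M \cup \{v\} \subseteq V(G)$ is a mutual visibility set by the first paragraph. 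Hence $\mobmv(G \vee K_1) \ge \mu_2(G)+1$.

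For the upper bound, take any mobile mutual visibility configuration $C$ of $G \vee K_1$ with $t$ robots. Because every vertex must be visited, at some stage of the process a robot sits on $x$; at that stage the configuration $C'$ has the form $M' \cup \{x\}$ with $M' = C' \cap V(G)$ and $|M'| = t-1$. The crucial step is to check that $M'$ is a $\mu_2$-set of $G$. For a pair $u,v \in M'$ non-adjacent in $G$, any shortest $u,v$-path of $G \vee K_1$ avoiding $M' \cup \{x\}$ has length two and its middle vertex is not $x$, hence is a common neighbour of $u$ and $v$ in $G$ lying outside $M'$; this both forces $d_G(u,v) = 2$ and supplies a shortest $u,v$-path of $G$ avoiding $M'$. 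Thus $M'$ is a mutual visibility set of $G$ in which all distances are at most two, so $t - 1 = |M'| \le \mu_2(G)$ and therefore $\mobmv(G \vee K_1) \le \mu_2(G)+1$.

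The argument is short, and I expect the only delicate point to be the translation carried out in the last paragraph: one has to notice that occupying $x$ destroys the universal shortcut it provides, and so compels every non-adjacent pair among the remaining robots to be genuinely at distance two \emph{inside $G$} and to be connected by a $G$-path avoiding the set — which is precisely the definition of a $\mu_2$-set. No other obstacle is anticipated.
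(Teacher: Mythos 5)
Your proposal is correct and follows essentially the same route as the paper: for the upper bound you examine the moment a robot occupies the apex vertex and observe that the remaining robots must then form a mutual visibility set of $G$ with pairwise distances at most two, and for the lower bound you place robots on a $\mu_2$-set plus the apex and let the apex robot reach every unoccupied vertex in one step. The details you flag as delicate (the apex being occupied forcing both $d_G(u,v)=2$ and an unoccupied common neighbour inside $G$) are exactly the points the paper's proof relies on.
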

	\begin{proof}
		Let $v$ be the single vertex of $K_1$. Whenever $v$ is not occupied, any configuration will be mutually visible. Consider the stage that a robot $R$ is at $v$. Then there cannot be a pair of robots $R_1,R_2$ in $V(G)$ at distance three or more apart in $G$, since the unique shortest path in $G \vee K_1$ between them passes through $R$ at $v$. Furthermore, if $R_1$ and $R_2$ are stationed at non-adjacent vertices in $V(G)$, then they must have an unoccupied common neighbour in $G$. It follows that the robots in $V(G)$ at this point occupy a mutual visibility set of $G$ and are at distance at most two apart. The upper bound $\mobmv (G \vee K_1) \leq \mu _2(G)+1$ follows. From this configuration the robot at $v$ can visit any unoccupied vertex of $G$ in one step, so we have equality.
	\end{proof}

	\section{Mobile mutual visibility in grid graphs}\label{sec:grid graph}
	
	Our main goal in this section is to find the exact value of the mobile mutual visibility number of strong and Cartesian grids, thereby answering Problem 6 of~\cite{dettlaff2024}. In fact, whilst it was shown in~\cite{Cartesian} that $\mob (P_r \cp P_s) = 3$ for $r,s \geq 3$, the mobile general position number of strong grids has not appeared in the literature, so we begin by finding this value.

	\begin{proposition}
		For all $r \geq s \geq 3$, $\mob(P_r \strongprod P_s) = 4$.
	\end{proposition}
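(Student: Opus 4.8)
The plan is to coordinatise $V(P_r \strongprod P_s)$ as $[r]\times[s]$, so that the distance is the Chebyshev distance $d\big((a,b),(a',b')\big)=\max\{|a-a'|,|b-b'|\}$ and a vertex $(c,d)$ lies on some shortest $(a,b)$--$(a',b')$ path exactly when $d((a,b),(c,d))+d((c,d),(a',b'))=d((a,b),(a',b'))$; concretely this says that $(c,d)$ lies in the $L^\infty$-interval of the two endpoints, i.e.\ it is reachable from $(a,b)$ and can then be continued to $(a',b')$ by a monotone ``staircase'' walk. In particular any three vertices on a common horizontal, vertical or diagonal line fail to be in general position, and more generally a general position set must contain no three vertices one of which lies in such an interval of the other two. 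I will use this betweenness criterion throughout.

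For the upper bound I show $\mob(P_r\strongprod P_s)\le 4$. Suppose not; then there is a mobile general position configuration with $k\ge 5$ robots, and since every vertex is visited, at some stage a robot occupies the corner $(1,1)$, so the $k\ge 5$ occupied vertices form a general position set containing $(1,1)$. It therefore suffices to prove that no general position set of $P_r\strongprod P_s$ (with $r\ge s\ge 3$) of size at least five contains a corner. The point is that the $L^\infty$-interval from a corner to any vertex $v$ is a large ``diamond'' (a full cone of staircase walks): writing $f(w)=d((1,1),w)$, any two further vertices of such a set must be incomparable for the relation ``lies on a shortest $(1,1)$--$\cdot$ path'', no two of them may be antipodal through $(1,1)$, and they must be in general position among themselves. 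A case analysis --- splitting the four companion vertices of $(1,1)$ according to which side of the main diagonal each lies on and according to their $f$-values --- shows these requirements cannot all be met, and then $\mob\le 4$ follows. (In fact this argument also shows $\gp(P_r\strongprod P_s)=4$, in which case the bound is immediate from $\mob\le\gp$.)

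For the lower bound I exhibit a mobile general position configuration with four robots, namely the four corners $S_0=\{(1,1),(1,s),(r,1),(r,s)\}$. Since $r,s\ge 3$, no corner lies in the $L^\infty$-interval of two of the others, so $S_0$ is a general position set. By Lemma~\ref{lem:mobility} it remains to reach, from $S_0$, a configuration occupying each prescribed target $v$. Targets split into two kinds. If $v$ lies ``near a corner'' --- and one checks that the four regions $\{q_1+q_2\le s\}$, $\{q_1+q_2\ge r+2\}$, $\{q_1<q_2\}$, $\{q_1-q_2>r-s\}$, each of which is exactly where a single roaming robot stays out of all intervals determined by the other three corners, together cover all of $[r]\times[s]$ except a central band --- then I slide the appropriate corner robot to $v$ along a staircase inside that region. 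If $v$ lies in the central band, a short coordinated maneuver first brings three robots into a tight triangular cluster of three pairwise-adjacent vertices, placed a few steps off $v$ in a diagonal direction that keeps the cluster inside the grid, so that $v$ becomes an extreme vertex of the resulting four-robot set; the fourth robot is then moved onto $v$. In every case one verifies via the betweenness criterion that general position is maintained at each intermediate step. This gives $\mob(P_r\strongprod P_s)\ge 4$.

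I expect the main obstacle to be the central-target case of the lower bound together with the case analysis behind the corner lemma in the upper bound: both are finite but lengthy, and the lower bound in particular needs a genuinely chosen sequence of moves, since a single roaming robot provably cannot visit the central band while the other three sit at corners (any such vertex is $L^\infty$-between two of the four corners), so some coordination is unavoidable.
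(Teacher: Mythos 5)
Your overall strategy mirrors the paper's: start four robots at corners, let a single robot roam the regions where it stays outside all $L^\infty$-intervals determined by the other three, and handle the remaining vertices with a clique-like cluster. The upper bound is fine in substance, since $\gp(P_r \strongprod P_s)=4$ is a known result of Klav\v{z}ar and Yero that the paper simply cites together with $\mob \leq \gp$; your proposed corner-based case analysis is never actually carried out, but it is redundant given that citation.

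The genuine gap is in the lower bound, exactly where you predict it: the ``short coordinated maneuver'' for central-band targets is the entire difficulty and is left unspecified. With the four rectangle corners as your base configuration, the uncovered band $\{q_1\ge q_2,\ q_1-q_2\le r-s,\ s<q_1+q_2<r+2\}$ is not a thin sliver: for $r\gg s$ it contains most of the grid, and (as you observe) every vertex in it lies between $(1,s)$ and $(r,1)$, so three robots really must be relocated before the fourth can enter. You give no sequence of legal moves that assembles the triangular cluster while preserving general position at every intermediate step, and no account of where the fourth robot sits during the assembly. The paper's resolution is instructive by comparison: it stations the robots at the corners of an $s\times s$ window rather than of the whole rectangle, shows a single roamer covers each strict triangle $\{i<j\}$ and $\{i>j\}$ of that window, forms the cluster at the corner clique $\{(1,1),(1,2),(2,1)\}$ by the moves $(1,s)\move(1,s-1)\move\cdots\move(1,2)$ and $(s,1)\move\cdots\move(2,1)$, each intermediate position of which lies in an already-verified triangle, walks the fourth robot down the diagonal using precisely your equidistance observation (all three cluster vertices are at distance $k-1$ from $(k,k)$), and then translates the whole window along the long axis by an explicit legal four-move shift. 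Your write-up has the right ingredients --- the betweenness criterion and the equidistant clique cluster --- but not the construction that turns them into a proof.
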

	\begin{proof}
		We assume that $V(P_r) = [r]$, $V(P_s) = [s]$, and $V(P_r \strongprod P_s) = [r] \times [s]$. First, it is known from~\cite[Section 4]{Klavzar-2019} that $\gp(P_r \strongprod P_s) = 4$. Therefore, it only remains to show that $\mob(P_r \strongprod P_s) \geq 4$.
		
		First, we consider the case where $r=s$. Start by stationing robots at $(1,1)$, $(1,s)$, $(s,s)$, and $(s,1)$. The robot at $(1,s)$ may traverse all vertices in $\{ (i,j) : 1 \leq i < j \leq s \}$ whilst maintaining general position by moving along a shortest path from $(1,s)$ to any such vertex. Similarly, the robot at $(s,1)$ may traverse $\{ (i,j) : 1 \leq j < i \leq s \}$. We must now visit the vertices $\{ (i,i) : 1 < i < s \}$. First, move all robots back to their original positions. Perform the moves $(1,s) \move (1, s-1) \move (1, s-2) \move \dots \move (1,2)$ and  $(s,1) \move (s-1, 1) (s-2, 1) \move \dots \move (2,1)$. Finally, move $(s,s) \move (s-1, s-1) \move \dots \move (2,2)$.
		
		Now suppose that $r \geq s+1$. Begin by stationing robots at $(1,1)$, $(1,s)$, $(s,s)$, and $(s,1)$. Perform the moves described previously to visit each vertex in $\{ (i,j) : 1 \leq i, j \leq s \}$. Then undo these moves and return the robots to their initial positions. For each $0 \leq t \leq r-s-1$, when the robots are stationed at $(t,1)$, $(t,s)$, $(s+t,s)$, and $(s+t, 1)$, the sequence \[(s+t,1) \move (s+t+1, 1), (s+t, s) \move (s+t+1, s), (t, 1) \move (t+1, 1), (t, s) \move (t+1, s)\] is legal. Thus, by repeatedly applying the moves described above (shifted appropriately) before performing this sequence, we see that each vertex in $P_r \strongprod P_s$ may be traversed whilst maintaining general position. We conclude that $\mob(P_r \strongprod P_s) = 4$.
	\end{proof}
	
	\begin{theorem}
		For any $3 \leq m \leq n$, the strong grid $P_n \boxtimes P_m$ has mobile mutual visibility number $2(n+m)-6$, except that $\mobmv (P_3 \boxtimes P_3) = 5$.
	\end{theorem}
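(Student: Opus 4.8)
We treat the exceptional case $P_3\boxtimes P_3$ last. For $3\le m\le n$ with $(n,m)\neq(3,3)$ we necessarily have $n\ge 4$, so the estimate $2(n+m)-6\le\mobmv(P_n\boxtimes P_m)\le 2(n+m)-5$ of Problem~\ref{prob:strong grids} (proved in~\cite{dettlaff2024}) is available, and the whole task collapses to sharpening the upper bound by one: I would aim to prove that \emph{no mutual-visibility set $S$ of $P_n\boxtimes P_m$ with $|S|=2(n+m)-5$ is a mobile configuration} (for $n\ge4$, $m\ge3$). Combined with the existing bound this gives $\mobmv\le 2(n+m)-6$, matching the lower bound.

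The plan has two stages. \textbf{Stage 1 (stability).} A mutual-visibility set $S$ with $|S|=2(n+m)-5$ is forced to be ``almost the boundary'': with $a=|\partial(P_n\boxtimes P_m)\setminus S|$ and $b=|S\cap\operatorname{int}(P_n\boxtimes P_m)|$ one has $a=b+1$, and the few interior vertices of $S$ can occupy only very restricted positions. The main device is that, for an interior vertex $v=(i,j)$, each of the ``diagonal'' pairs $\{(i{-}1,j{-}1),(i{+}1,j{+}1)\}$ and $\{(i{-}1,j{+}1),(i{+}1,j{-}1)\}$ (when both endpoints lie in the grid) has $v$ as its \emph{unique} common neighbour; hence if $u,v,w$ are consecutive along a diagonal and $v$ is interior, then $u,v,w$ cannot all lie in $S$ — which in particular forces most interior vertices out of $S$ once the nearby corners belong to $S$. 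From this I would extract a short, exhaustive list of admissible near-boundary shapes for $S$. \textbf{Stage 2 (obstruction).} Consider the first moment a robot occupies a corner-adjacent interior vertex, say $(2,2)$, and let $S'$ be the displayed configuration. Using the bottleneck pairs $\{(1,1),(3,3)\}$ and $\{(1,3),(3,1)\}$ (each with unique connector $(2,2)$, so at least one endpoint of each must be vacant) together with the ``separator'' principle — if an interior column $C_i$ or interior row $R_j$ is entirely occupied then all robots lie weakly on one side of it — I would force a large vacant portion of the boundary near the corner $(1,1)$; then tracking the robots that had to be evacuated from there (each can only step into the interior, which re-creates a bottleneck of the same kind one layer further in), against the fixed count $2(n+m)-5$ and the rigidity from Stage~1, should produce a contradiction. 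It is probably cleanest to run the whole argument first for $m=3$, where $\operatorname{int}(P_n\boxtimes P_3)$ is a single row and the separator principle already bites whenever a central interior vertex $(i,2)$ is visited and $n$ is not too small, and then to adapt it to general $m$.

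The main obstacle is Stage 2: one must carry out the evacuation argument for \emph{every} admissible pattern from Stage 1 and for \emph{every} legal move sequence, and the bookkeeping — which boundary vertices are missing, where the at most $b$ interior vertices sit, and the fact that robots can ``wiggle'' past an occupied interior vertex along a short detour — is delicate. Making the Stage~1 list complete and provably exhaustive is the other point requiring care, since large mutual-visibility sets of strong grids come in several inequivalent near-boundary shapes.

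Finally, for $P_3\boxtimes P_3$. For the lower bound, $S_0=\{(1,1),(1,3),(3,1),(3,3),(1,2)\}$ is a mutual-visibility set, since the graph has diameter two and every non-adjacent pair of $S_0$ has the vacant centre $(2,2)$ as a common neighbour; and $S_0$ is mobile, because after sliding a corner robot inwards one can place a robot on $(2,2)$, and the symmetric moves together with their reversals let the swarm also reach $(2,1),(2,3)$ and $(3,2)$. Hence $\mobmv(P_3\boxtimes P_3)\ge5$. For the upper bound, any mobile configuration must at some stage put a robot on $(2,2)$, so it suffices to show that a mutual-visibility configuration of $P_3\boxtimes P_3$ with a robot on $(2,2)$ has at most five vertices: the bottleneck pairs $\{(1,1),(3,3)\}$ and $\{(1,3),(3,1)\}$ allow at most two corners to be occupied, and a short case analysis on which of the four edge-midpoints are then occupied — using that the non-adjacent pairs of edge-midpoints, namely $(1,2)$–$(3,2)$ and $(2,1)$–$(2,3)$, have only $(2,2)$ and two further edge-midpoints as common neighbours — bounds such a configuration by five vertices. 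Therefore $\mobmv(P_3\boxtimes P_3)=5$.
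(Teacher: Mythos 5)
Your overall strategy is the right one and matches the paper's: quote the bounds $2(n+m)-6\le\mobmv(P_n\boxtimes P_m)\le 2(n+m)-5$ from~\cite{dettlaff2024} and then rule out a mobile configuration of $2(n+m)-5$ robots by examining the moment a robot reaches $(2,2)$. However, what you have written is a plan rather than a proof: both Stage~1 (a complete, provably exhaustive list of near-boundary shapes) and Stage~2 (the evacuation argument for every such shape and every legal move sequence) are explicitly left unexecuted, and you yourself flag the bookkeeping as the main obstacle. The missing ingredient that makes the paper's argument short is a \emph{covering} of the grid: take the star at $(2,2)$ (the union of the two diagonals through it) together with the descending diagonals emanating from $(1,i)$ for $4\le i\le m-1$ and from $(j,m)$ for $1\le j\le n-2$. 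Each descending diagonal carries at most two robots and the star at most three (since $(2,2)$ lies on both of its diagonals), so these sets hold at most $3+2(n+m-6)=2(n+m)-9$ robots; only four or five vertices of the grid (such as $(1,2)$, $(2,1)$, $(n-1,m)$, $(n,m-1)$, $(n,m)$) are uncovered, so almost all of them must be occupied and every counted set must be full. A short local analysis then forces a robot onto $(1,1)$, which is surrounded by robots at $(1,2)$, $(2,1)$, $(2,2)$ and hence sees no other robot --- a contradiction. Without this (or an equivalent) global counting device, your ``stability plus evacuation'' route does not yet constitute a proof.

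Two smaller points. First, your lower-bound argument for $P_3\boxtimes P_3$ is not correct as stated: from $S_0=\{(1,1),(1,3),(3,1),(3,3),(1,2)\}$, sliding a corner robot to $(2,2)$ is not a legal move, since it leaves the antipodal corner pair (e.g.\ $(1,3)$ and $(3,1)$, whose unique common neighbour is $(2,2)$) mutually invisible; a valid configuration containing $(2,2)$ is, for instance, $\{(2,2),(1,1),(1,3),(2,1),(2,3)\}$, and in any case the value $\mobmv(P_3\boxtimes P_3)=5$ is already established in~\cite{dettlaff2024} and need not be reproved. Second, your observation that consecutive vertices along a diagonal cannot all lie in a mutual visibility set is exactly the ``at most two robots per descending diagonal'' fact the covering argument exploits; you had the local lemma but not the global decomposition that turns it into the bound.
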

	\begin{proof}
		It was shown in~\cite{dettlaff2024} that $2(n+m)-6 \leq \mobmv (P_n \boxtimes P_m) \leq 2(n+m)-5$, except for $m = n = 3$, in which case the largest mobile mutual visibility set contains five robots. We show that $P_n \boxtimes P_m$ has no mobile mutual visibility set of order $2(n+m)-5$. Suppose for a contradiction that $2(n+m)-5$ mutually visible robots can traverse the vertices of the strong grid. Consider the stage at which a robot is situated at the vertex $(2,2)$. By the \emph{star} at a vertex $(i,j)$ we will mean the union of the ascending and descending diagonals through $(i,j)$. Consider the star at $(2,2)$, together with the descending diagonals from the vertices $(1,i)$ and $(j,m)$, where $4 \leq i \leq m-1$ and $1 \leq j \leq n-2$. The star can contain at most three robots, since the robot at $(2,2)$ lies on the intersection of descending and ascending diagonals, and each descending diagonal contains at most two robots. Hence altogether these sets of vertices can hold at most $3+2(n+m-6) = 2(n+m)-9$ robots.  
		
		Suppose that $n = m$. Then there are four vertices not contained in the union of the star and the descending diagonals, namely $(1,2)$, $(2,1)$, $(n-1,m)$ and $(n,m-1)$. Hence when there is a robot at $(2,2)$, a robot must be stationed at each of these four vertices, and also the star at $(2,2)$ contains exactly three robots and each of the descending diagonals in our collection hold exactly two robots. Observe that if a robot was stationed at the ascending diagonal through $(2,2)$, at a vertex other than $(1,1)$ or $(2,2)$, then it would lie on both the star and one of the descending diagonals, so that we would have to reduce our count and there would be strictly less than the required $2(n+m)-5$ robots. Hence for the star to hold three robots the set $\{ (1,1),(1,3),(3,1) \}$ must hold two robots. We cannot have robots at both vertices $(1,3)$ and $(3,1)$, as this would leave us with three robots on a diagonal, so there must be a robot at $(1,1)$. However, the robot at $(1,1)$ is now surrounded by the robots at $(1,2)$, $(2,2)$ and $(2,1)$, so no other robots are visible to the robot at $(1,1)$, a contradiction. 
		
		The same argument holds if $n = m+1$. Our counting argument implies that the star at $(2,2)$ holds exactly three robots, each of the descending diagonal holds two robots, and a further four robots are stationed at $(1,2)$, $(2,1)$, $(n,m-1)$ and $(n,m)$. As before, for the star to hold three robots there must be a robot at $(1,1)$, which cannot see any robots beyond those at $(1,2)$, $(2,2)$ and $(2,1)$.
		
		Assume now that $n \geq m+2$. Now there are five vertices that do not lie in the star at $(2,2)$ or one of our collection of descending diagonals, namely $(1,2)$, $(2,1)$, $(n-1,m)$, $(n,m-1)$ and $(n,m)$, and at least four robots must be stationed on these five vertices. Suppose that there are robots at both $(1,2)$ and $(2,1)$. Then the star cannot contain a robot at $(1,1)$, as it would be surrounded as before. Furthermore, there is no robot at $(1,3)$, for otherwise such a robot would not be able to see the robot at $(2,1)$, and similarly there is no robot at $(3,1)$. Thus either the star contains just one robot, or else the star contains two robots, one of which is on the ascending diagonal apart from the vertices $(1,1)$ and $(2,2)$, which would also lie on a descending diagonal. Hence we can reduce our robot count to $5+1+2(n+m-6) = 2(n+m)-6$, which is too small.
		
		Therefore we conclude that there is a robot on exactly one of $(1,2)$ and $(2,1)$, robots at each of $(n-1,m)$, $(n,m-1)$ and $(n,m)$, three robots on the star and two robots on each of the descending diagonals. Again, if there is a robot on the intersection of the star and one of the descending diagonals, then there would be too few robots, so there are two robots on the set $\{ (1,3),(3,1),(1,1)\} $. To avoid having three robots on the descending diagonal through $(2,2)$ there must be a robot at $(1,1)$. 
		
		Suppose that the robot on $\{ (1,2),(2,1)\} $ is at $(2,1)$. Then the vertices $(3,1)$, $(3,2)$ and $(4,1)$ are not visible to the robot at $(1,1)$, so, considering the three robots in the star and the two robots on the descending diagonal from $(1,4)$, we see that there must be robots at each of $(1,3)$, $(1,4)$ and $(2,3)$; however, this means that the robot at $(1,4)$ is not visible to the robot at $(1,1)$. A symmetrical argument applies if there is a robot at $(1,2)$. We have thus shown that there is no mutual visibility set of $P_n \boxtimes P_m$ of order at least $2(n+m)-5$ that contains the vertex $(2,2)$.
		
	\end{proof}
	
	We now determine the mobile mutual visibility number of Cartesian grids. It was shown in~\cite[Proposition 6.2]{dettlaff2024} that $\mobmv (P_2 \cp P_n) = 3$ when $2 \leq n \leq 3$ and $4$ for $n \geq 4$. We first find the answer for grids $P_n \cp P_m$ where $n,m \geq 5$. In these two proofs, we will refer to a move $(x,y) \move (x-1,y)$ as moving the robot left, $(x,y) \move (x+1,y)$ as moving right, $(x,y) \move (x,y+1)$ as moving up and $(x,y) \move (x,y-1)$ as moving down.

	\begin{theorem}
		For $n \geq m \geq 5$, \[\mobmv(P_n \cp P_m) =
		\begin{cases}
			2m, & \text{if}\ n > m, \\
			2m-1, & \text{if}\ n = m.
		\end{cases}
		\]
		\label{largecartgrids}
	\end{theorem}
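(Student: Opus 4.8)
The plan is to treat the upper and lower bounds separately, fixing coordinates so that $V(P_n \cp P_m) = [n] \times [m]$, calling a set of vertices with a common second coordinate a \emph{row} and one with a common first coordinate a \emph{column}. For the upper bound the starting point is the elementary observation that a mutual visibility set contains at most two vertices of any row and at most two of any column: if $(a,y),(b,y),(c,y)$ with $a<b<c$ all lie in the set, then the only shortest $(a,y),(c,y)$-path runs along row $y$ through $(b,y)$, so the extreme two are not visible. Since there are $m$ rows, this gives $\mobmv(P_n \cp P_m) \le \mu(P_n \cp P_m) \le 2m$, which already settles the upper bound when $n>m$. When $n=m$ we argue further: a configuration of $2m$ robots has exactly two per row and, having at most two per column in only $m$ columns, has exactly two per column as well; but then any legal move slides a robot off its column or its row into an adjacent one, creating three collinear robots and destroying mutual visibility. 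Hence no legal move is available, so (as $m^2>2m$) a $2m$-robot configuration cannot visit every vertex, and $\mobmv(P_m \cp P_m)\le 2m-1$.

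For the lower bounds we must exhibit mobile configurations of the claimed sizes. The structural fact organising the construction is the one just used: once $|S|$ reaches $2m$ (respectively $2m-1$), a vertical move that would overload a row is forbidden, so the two robots of each row are essentially confined to that row and must sweep all $n$ of its cells horizontally, while the only ``play'' in the system comes from columns (respectively, the one row and one column) that are not full --- and such under-full columns exist precisely because $n>m$ (respectively, because one robot has been removed). For $n>m$ we place two robots in each row in a \emph{double staircase}: the ``left'' robots lie along a (possibly folded) diagonal in a band of columns near column $1$ and the ``right'' robots along a diagonal in a band near column $n$, arranged so that no column carries three robots and so that every same-band pair and every left--right pair is joined by a monotone staircase avoiding all other robots, obtained by routing along two sides of the rectangle the pair spans. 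We then sweep the grid row by row, sliding a row's two robots horizontally to cover its cells; when a column is in the way we first evacuate its robots into an under-full column as in a sliding-block puzzle, and after visiting the target vertex we reverse every move, so Lemma~\ref{lem:mobility} yields mobility. For $n=m$ we use the same pattern with one robot deleted so that exactly one row and one column are under-full, which supplies just enough room to manoeuvre; here the exact placement depends on the parity of $m$, since for even $m$ the natural diagonal/anti-diagonal configuration already fills every column and the deleted vertex must be chosen with care.

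The main obstacle is the mobility verification in the tight regime where $n$ is only slightly larger than $m$, and in the case $n=m$. There the columns are nearly saturated, so repositioning one robot forces a cascade of preparatory moves, and one must certify that some legal order of horizontal moves keeps every intermediate configuration a mutual visibility set. Since checking mutual visibility of an intermediate configuration amounts to producing, for each pair of occupied vertices, a monotone staircase avoiding the rest, the crux is to keep the two diagonal ``spines'' of the double staircase out of the spanned rectangles --- or, when a spine must lie inside one, to retain enough free cells around it to route past. Once this bookkeeping is set up, the remaining verifications are routine, and combining the two bounds gives the stated value.
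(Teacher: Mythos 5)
Your upper bound argument is correct and is essentially the paper's: at most two robots per row (and per column) in any mutual visibility set of a Cartesian grid, since three robots in a line have a unique shortest path between the extremes passing through the middle one; this gives $\mobmv(P_n \cp P_m) \le 2m$, and for $n=m$ the saturation of every row and column freezes any $2m$-robot configuration, forcing $\mobmv(P_m \cp P_m) \le 2m-1$.

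The lower bound, however, is where essentially all of the work in this theorem lies, and your proposal does not carry it out. You describe the shape of the intended configuration (a ``double staircase'' of folded diagonals near the two ends, which is indeed the right idea and matches the paper's placement $(f(i),i)$, $(m+2-f(i),i)$ with $f(t)=1+|\lceil m/2\rceil - t|$), but you never pin down an explicit configuration, never specify the move sequences, and defer the verification that every intermediate configuration is a mutual visibility set to ``routine bookkeeping.'' This is a genuine gap, not a routine one: the row/column counting bound is only a necessary condition for mutual visibility, so each intermediate configuration requires exhibiting, for every pair of robots, a monotone staircase in their spanned rectangle avoiding all other robots, and in the tight regimes ($n=m$ and $n=m+1$) almost every column is saturated, so a single horizontal move typically overloads a column unless the whole swarm moves in a coordinated cyclic rotation. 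The paper resolves this not by your proposed row-by-row sweep with local ``evacuations'' (whose feasibility you have not demonstrated and which is not obviously workable when only one or two columns have slack), but by a global rotation of the entire configuration, with separate explicit treatments for $m$ odd and even, for $n=m+1$ versus $n\ge m+2$, and for the two central vertices that the rotation misses. You would need to supply the explicit starting positions, the exact move sequences, and the visibility checks for each case before this counts as a proof of the lower bound.
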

	\begin{proof}
		Let $n \geq m \geq 5$. We start by showing that $\mobmv(P_{m+1} \cp P_m) = 2m$ when $m$ is odd. Let $f(t)=1+|\left \lceil \frac{m}{2} \right \rceil - t|$. Begin by stationing robots on $(f(i),i)$ and $(m+2-f(i), i)$ for $1 \leq i \leq m$. Label the robot with first coordinate $1$ as $R_0$, the robot with first coordinate $2$ and largest second coordinate as $R_1$ and continue in a clockwise fashion to label the remaining robots $R_2, R_3, \dots, R_{2m-1}$. For $1 \leq k \leq \left \lfloor \frac{m}{2} \right \rfloor - 1$, move $R_1, R_2, \dots, R_{m+k-1}$ left and then (working modulo $2m$) $R_{m+k}, R_{m+k+1}, \dots, R_{2m-1+k}$ to the right. At the end of these moves, there are robots stationed at $(2, m)$ and $(m,1)$ which may be moved to $(1,m)$ and $(m+1,1)$, respectively.
		
		By symmetry, these moves show how to visit all vertices except $(\frac{m+1}{2}, \frac{m+1}{2})$ and $(\frac{m+3}{2}, \frac{m+1}{2})$. We show how to visit $(\frac{m+3}{2}, \frac{m+1}{2})$ starting from the original configuration. Move $R_{m-1}$ to the right and then $R_m$ to the left. In increasing order, move $R_1, R_2, \dots, R_{m-2}$ and $R_{m}$ to the left. If $m=5$, we are done. Otherwise, move $R_{m+1}, \dots, R_{2m-1}, R_{0}$ to the right. Now for $1 \leq k \leq \left \lfloor \frac{m}{2} \right \rfloor - 3$, move $R_{k+1}, \dots, R_{m+k-1}$ left, $R_{m+k+1}, R_{m+k+2}, \dots, R_{2m-1}$ and $R_0, \dots R_{k-1}$ to the right. Finally, set $k = \left \lfloor \frac{m}{2} \right \rfloor - 2$ and perform the sequence up until $R_m$ is moved to $(\frac{m+3}{2}, \frac{m+1}{2})$.
		
		\begin{figure}
			\captionsetup{justification=centering}
			\centering
			\begin{subfigure}{.4\textwidth}
				\centering
				
				\begin{tikzpicture}[x=0.2mm,y=-0.2mm,inner sep=0.5mm,scale=1,thick,vertex/.style={circle,draw,minimum size=10},
					occupied/.style={fill=black},
					occupied20/.style = {occupied},
					occupied11/.style = {occupied},
					occupied02/.style = {occupied},
					occupied03/.style = {occupied},
					occupied14/.style = {occupied},
					occupied25/.style = {occupied},
					occupied34/.style = {occupied},
					occupied43/.style = {occupied},
					occupied42/.style = {occupied},
					occupied31/.style = {occupied},]
					
					\foreach \r in {0,...,4}{
						\foreach \c in {0,...,5}{
							\node at (\c*30,5*30-\r*30) [vertex,occupied\r\c/.try] (\r\c) {};
						}
					};
					
					\foreach \r [count=\nr] in {0,...,3} {
						\foreach \c [count=\nc] in {0,...,4}{
							\path (\r\c) edge (\r\nc);
							\path (\r\c) edge (\nr\c);
						}
					};
					
					\foreach \c [count=\nc] in {0,...,4}{
						\path (4\c) edge (4\nc);
					}
					
					\foreach \r [count=\nr] in {0,...,3}{
						\path (\r5) edge (\nr5);
					}
					
				\end{tikzpicture}
				\caption[justification=centering]{The initial configuration for $n = 6, m = 5$.}
				\label{fig:a5r1initial}
			\end{subfigure}
			\begin{subfigure}{.4\textwidth}
				\centering
				
				\begin{tikzpicture}[x=0.2mm,y=-0.2mm,inner sep=0.5mm,scale=1,thick,vertex/.style={circle,draw,minimum size=10},
					occupied/.style={fill=black},
					occupied41/.style = {occupied},
					occupied21/.style = {occupied},
					occupied12/.style = {occupied},
					occupied42/.style = {occupied},
					occupied30/.style = {occupied},
					occupied03/.style = {occupied},
					occupied33/.style = {occupied},
					occupied24/.style = {occupied},
					occupied04/.style = {occupied},
					occupied15/.style = {occupied},]
					
					\foreach \r in {0,...,4}{
						\foreach \c in {0,...,5}{
							\node at (\c*30,5*30-\r*30) [vertex,occupied\r\c/.try] (\r\c) {};
						}
					};
					
					\foreach \r [count=\nr] in {0,...,3} {
						\foreach \c [count=\nc] in {0,...,4}{
							\path (\r\c) edge (\r\nc);
							\path (\r\c) edge (\nr\c);
						}
					};
					
					\foreach \c [count=\nc] in {0,...,4}{
						\path (4\c) edge (4\nc);
					}
					
					\foreach \r [count=\nr] in {0,...,3}{
						\path (\r5) edge (\nr5);
					}
					
				\end{tikzpicture}
				\caption[justification=centering]{The result of the first round of moves for $n = 6, m = 5$.}
				\label{fig:a5r1round1}
			\end{subfigure}
		\end{figure}
		
		By symmetry, all vertices can be visited while maintaining mutual visibility with $2m$ robots. Since two robots per row is the maximum possible for mutual visibility, $\mobmv(P_{m+1} \cp P_m) = 2m$.
		
		To see that $\cmobmv(P_{m+1} \cp P_m) = 2m - 1$, first notice that $\cmobmv(P_{m+1} \cp P_m) < 2m$, since when there are two robots in each row, no robot can move to another row. Begin by stationing the robots as previously described, with $R_m$ removed. Notice that we may move $R_{a+1}$ right and then up to occupy the vertex previously occupied by $R_{m}$. Therefore, by performing the moves described twice and ignoring the moves for $R_m$ and then $R_{m+1}$, we see that this is a mobile mutual visibility set. Furthermore, we may permute the robots by moving $R_i$ right then up for $m \leq i \leq \lfloor \frac{m}{2} \rfloor$. Then, move $R_{m + \lceil m/2 \rceil}$ right and $R_i$ down then right for $m + \lfloor m/2 \rfloor + 1 \leq i \leq 2m$. Next, move $R_i$ left then down for $1 \leq i \leq \lfloor \frac{m}{2} \rfloor$, $R_{ \lceil m/2 \rceil}$ left and $R_j$ up then left for $\lceil \frac{m}{2} \rceil + 1 \leq j \leq m+1$ (ignoring $R_m$).
		
		The proof for even $m$ is similar. However, we now put $f(t)= 1+|\frac{m}{2} - t|$ and begin by stationing robots at $(f(i),i)$ for $1 \leq i \leq m$, $(m+1-f(i), i)$ for $1 \leq i \leq \frac{m}{2}$, and $(m+3-f(i), i)$ for $\frac{m}{2} + 1 \leq i \leq m$. Notice that after the first sequence, the configuration of robots is the reflection in the vertical of the original. After the first group of repetitions, we are instead left with $(\frac{m}{2}+1, \frac{m}{2})$ and $(\frac{m}{2}+1, \frac{m}{2} + 1)$ unvisited. We may follow the same method as before to traverse these. Again two robots per row is the maximum possible. Thus, $\mobmv(P_{m+1} \cp P_m) = 2m$. A similar pattern of moves may be followed to permute the robots with $R_m$ removed, giving $\cmobmv(P_{m+1} \cp P_m) = 2m - 1$.
		
		\begin{figure}[h]
			
			\centering
			
			\captionsetup{justification=centering}
			
			\begin{tikzpicture}[x=0.2mm,y=-0.2mm,inner sep=0.2mm,scale=1,thick,vertex/.style={circle,draw,minimum size=10},
				occupied/.style={fill=black},
				occupied20/.style = {occupied},
				occupied11/.style = {occupied},
				occupied31/.style = {occupied},
				occupied02/.style = {occupied},
				occupied42/.style = {occupied},
				occupied03/.style = {occupied},
				occupied53/.style = {occupied},
				occupied54/.style = {occupied},
				occupied14/.style = {occupied},
				occupied25/.style = {occupied},
				occupied45/.style = {occupied},
				occupied36/.style = {occupied},]
				
				\foreach \r in {0,...,5}{
					\foreach \c in {0,...,6}{
						\node at (\c*30,6*30-\r*30) [vertex,occupied\r\c/.try] (\r\c) {};
					}
				};
				
				\foreach \r [count=\nr] in {0,...,4} {
					\foreach \c [count=\nc] in {0,...,5}{
						\path (\r\c) edge (\r\nc);
						\path (\r\c) edge (\nr\c);
					}
				};
				
				\foreach \c [count=\nc] in {0,...,5}{
					\path (5\c) edge (5\nc);
				}
				
				\foreach \r [count=\nr] in {0,...,4}{
					\path (\r6) edge (\nr6);
				}
				
			\end{tikzpicture}
			\caption[justification=centering]{The initial configuration for $n = 7, m = 6$.}
			\label{fig:a6r1initial}
			
		\end{figure}
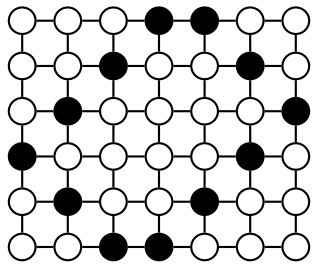
		
		Now if we have $n \geq m+2$ we can traverse all vertices with first coordinate at most $m+1$ by using the methods above. We can then move each robot (in decreasing order of the first coordinate) one space to the right before repeating the moves (appropriately shifted) to traverse the next column. By repeating this process, we again see that $\mobmv(P_{n} \cp P_m) = 2m$.
		
		This only leaves the case of $n = m$. In this case $\mobmv(P_n \cp P_n) < 2n$ since in any mutual visibility set of $P_n \cp P_n$ of size $2n$, there would be exactly two robots in each row and column. Therefore, no move would be possible from any initial configuration. 
		
		First consider when $n$ is even. Put $f(t) = 1 + |\left \lfloor \frac{n}{2} \right \rfloor + 1 - t|$ and station robots at $(f(i), i)$ for $1 \leq i \leq n$, $(n+2-f(i), i)$ for $2 \leq i \leq \frac{n}{2}$, and $(n+1-f(i),i)$ for $\frac{n}{2} < i \leq n$. Label the robots as before and for $1 \leq k \leq \lceil \frac{n}{2} \rceil - 2$ move $R_k, R_{k+1}, \dots, R_{n-2+k}$ to the left and then (working modulo $2n-1$) $R_{n+k}, R_{2n-2+k}$ to the right. There will then be a robot at $(2,n)$, which may be moved to $(1,n)$. By symmetry, all vertices may be visited in this way.
		
		We also have $\cmobmv(P_n \cp P_n) = 2n-1$. We show how to permute the robots in their initial configuration. Move $R_i$ down for $i=\frac{3n}{2}, \frac{3n}{2} + 1, \dots, 0$. Next, move $R_i$ left then down for $i=1,\dots,\frac{n}{2} - 1$ and $R_{n/2}$ left. After this, move $R_i$ up then right for $i=\frac{n}{2} + 1, \dots, n - 1$. Move $R_n$ up and then $R_i$ right then up for $i=n+1, \dots, \frac{3n}{2} - 1$. Finish by moving $R_i$ right for $i=\frac{3n}{2}, \frac{3n}{2} + 1, \dots, 0$.
		
		Finally, when $n$ is odd, station robots at $(f(i),i)$ for $1 \leq i \leq n$, $(n+1-f(i),i)$ for $2 \leq i \leq \left \lceil \frac{n}{2} \right \rceil$, and $(n+2-f(i),i)$ for $\left \lceil \frac{n}{2} \right \rceil < i \leq n$. Proceed as before to visit all vertices except $(\lceil \frac{n}{2} \rceil, \lceil \frac{n}{2} \rceil)$.
		
		\begin{figure}
			\captionsetup{justification=centering}
			\centering
			\begin{subfigure}{.4\textwidth}
				\centering
				
				\begin{tikzpicture}[x=0.2mm,y=-0.2mm,inner sep=0.5mm,scale=1,thick,vertex/.style={circle,draw,minimum size=10},
					occupied/.style={fill=black},
					occupied20/.style = {occupied},
					occupied11/.style = {occupied},
					occupied31/.style = {occupied},
					occupied02/.style = {occupied},
					occupied42/.style = {occupied},
					occupied13/.style = {occupied},
					occupied43/.style = {occupied},
					occupied24/.style = {occupied},
					occupied34/.style = {occupied},]
					
					\foreach \r in {0,...,4}{
						\foreach \c in {0,...,4}{
							\node at (\c*30,5*30-\r*30) [vertex,occupied\r\c/.try] (\r\c) {};
						}
					};
					
					\foreach \r [count=\nr] in {0,...,3} {
						\foreach \c [count=\nc] in {0,...,3}{
							\path (\r\c) edge (\r\nc);
							\path (\r\c) edge (\nr\c);
						}
					};
					
					\foreach \c [count=\nc] in {0,...,3}{
						\path (4\c) edge (4\nc);
					}
					
					\foreach \r [count=\nr] in {0,...,3}{
						\path (\r4) edge (\nr4);
					}
					
				\end{tikzpicture}
				\caption[justification=centering]{The initial configuration for $P_5 \cp P_5$.}
				\label{fig:a5r0initial}
			\end{subfigure}
			\begin{subfigure}{.4\textwidth}
				\centering
				
				\begin{tikzpicture}[x=0.2mm,y=-0.2mm,inner sep=0.5mm,scale=1,thick,vertex/.style={circle,draw,minimum size=10},
					occupied/.style={fill=black},
					occupied30/.style = {occupied},
					occupied21/.style = {occupied},
					occupied41/.style = {occupied},
					occupied12/.style = {occupied},
					occupied52/.style = {occupied},
					occupied53/.style = {occupied},
					occupied03/.style = {occupied},
					occupied14/.style = {occupied},
					occupied44/.style = {occupied},
					occupied41/.style = {occupied},
					occupied25/.style = {occupied},
					occupied35/.style = {occupied},]
					
					\foreach \r in {0,...,5}{
						\foreach \c in {0,...,5}{
							\node at (\c*30,6*30-\r*30) [vertex,occupied\r\c/.try] (\r\c) {};
						}
					};
					
					\foreach \r [count=\nr] in {0,...,4} {
						\foreach \c [count=\nc] in {0,...,4}{
							\path (\r\c) edge (\r\nc);
							\path (\r\c) edge (\nr\c);
						}
					};
					
					\foreach \c [count=\nc] in {0,...,4}{
						\path (5\c) edge (5\nc);
					}
					
					\foreach \r [count=\nr] in {0,...,4}{
						\path (\r5) edge (\nr5);
					}
					
				\end{tikzpicture}
				\caption[justification=centering]{The initial configuration for $P_6 \cp P_6$.}
				\label{fig:a6r0initial}
			\end{subfigure}
		\end{figure}
		
		If $n = 5$, to traverse $(3,3)$ from the initial configuration, move $R_1$ left, $R_0$ right, $R_8$ left and then down, $R_7$ left then up, $R_6$ down, and then $R_0$ right to $(3,3)$. If $n > 5$, move $R_1$ left, $R_0$ right, $R_{2a-2}$ left, $R_{2n-\left \lfloor n/2 \right \rfloor+i}$ down for $0 \leq i \leq \left \lfloor\frac{n}{2}\right \rfloor - 2$. Finally, for $2 \leq i \leq \left \lfloor \frac{n}{2} \right \rfloor$, move $R_{2n-1-i}$ left, $R_0$ right, and then $R_i$ left. These steps result in $R_0$ at $(\left \lceil \frac{n}{2} \right \rceil, \left \lceil \frac{n}{2} \right \rceil)$. We conclude that $\mobmv(P_n \cp P_n) = 2n-1$.
		
		We can use similar moves to show that $\cmob(P_n \cp P_n) = 2n-1$ for odd $n$ as well.
	\end{proof}

	We now complete the classification of the mobile mutual visibility numbers of Cartesian grids by solving the remaining case of $3 \times n$ and $4 \times n$ grids.
	
	\begin{theorem}
		For $a \geq 3$,		 
		\[\mobmv(P_n \cp P_3) =
		\begin{cases}
			4, & \text{if}\ n = 3, \\
			5, & \text{if}\ n = 4, \\
			6, & \text{if}\ n \geq 6.
		\end{cases} \] and \[ \mobmv(P_n \cp P_4) =
		\begin{cases}
			5, & \text{if}\ n = 3, \\
			7, & \text{if}\ n = 4 \,\text{or}\ 5 \\
			8, & \text{if}\ n \geq 6.
		\end{cases}
		\]
		\label{smallcartgrids}
	\end{theorem}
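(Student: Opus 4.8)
The overall plan is to split each bound into a generic large-$n$ regime, where the answer coincides with $\mu$, and the handful of small grids, which behave differently. The basic tool for the upper bounds is the \emph{fibre bound}: for $b\ge 3$ any mutual visibility set of $P_a\cp P_b$ meets each of the $a$ fibres $\{i\}\times V(P_b)$ in at most two vertices, since three vertices of one fibre lie on a common geodesic (unique, and staying inside the fibre). Hence $\mu(P_a\cp P_b)\le 2\min(a,b)$, so $\mobmv(P_n\cp P_3)\le\mu(P_n\cp P_3)\le 6$ and $\mobmv(P_n\cp P_4)\le\mu(P_n\cp P_4)\le 8$; these will be matched by the constructions below, settling the upper bound for all large $n$.

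For the small grids the bound on $\mu$ is no longer strong enough. Here I would show, for each grid, that \emph{no mutual visibility set of the forbidden size $c$ can contain some distinguished central vertex $v$}; since every configuration occurring in a hypothetical mobile set of size $c$ is such a mutual visibility set, $v$ would never be occupied and hence never visited, contradicting Lemma~\ref{lem:mobility}. For $P_4\cp P_4$ with $c=8$ there is a shortcut: the fibres in \emph{both} directions are then saturated with two robots, so any legal move would leave three robots on a common geodesic, no move exists at all, and $\mobmv(P_4\cp P_4)\le 7$. For $P_3\cp P_3$ ($c=5$), $P_4\cp P_3$ ($c=6$) and $P_3\cp P_4$ ($c=6$), a short finite check — in the spirit of the strong-grid argument above, where the configuration was examined at the instant a robot sat on $(2,2)$ — shows that the centre of $P_3\cp P_3$, respectively the vertices $(2,2)$ and $(3,2)$ of $P_4\cp P_3$ (and, by the reflection $x\mapsto 5-x$, of $P_3\cp P_4$), cannot lie in a mutual visibility set of the relevant size, because any such set forces a pair of far-apart robots whose every geodesic is then blocked. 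The analogous statement for $P_5\cp P_4$ with $c=8$ is the main obstacle: there is genuine freedom to slide robots between columns, so the set of configurations that can arise is much larger, and the delicate point is to find a short uniform reason why a distinguished vertex is unoccupiable (or why coverage fails) rather than a long case split.

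For the lower bounds, in the generic cases $P_n\cp P_3$ and $P_n\cp P_4$ with $n$ large I would use a \emph{conveyor belt}: place two robots on each fibre in the longer direction, near its two ends, and cyclically push the robots along these fibres so that each fibre is swept in turn while the remaining robots stay near the corners and mutual visibility is never lost; for large $n$ one sweeps a fixed-width strip and then shifts every robot one step along the long direction and repeats, exactly as in the case $n\ge m+2$ of Theorem~\ref{largecartgrids}. For the small grids $P_3\cp P_3$, $P_3\cp P_4$, $P_4\cp P_3$, $P_4\cp P_4$ and $P_5\cp P_4$ I would simply exhibit an explicit configuration of the claimed size together with a short sequence of legal moves reaching each target vertex (using Lemma~\ref{lem:mobility}, so it suffices to reach each vertex from the starting configuration), displayed as figures in the style of the larger grids. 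Checking that mutual visibility survives every intermediate step — especially in the tight $7$-robot configurations on $P_4\cp P_4$ and $P_5\cp P_4$ — is routine but carries most of the verification burden, and making the central-vertex exclusions for $P_5\cp P_4$ clean will be where I expect to spend the most effort.
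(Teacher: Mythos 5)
Your overall strategy is the right one and matches the paper's: the generic upper bounds come from the fibre bound (at most two robots per row of the short direction, giving $6$ and $8$), and the exceptional small grids are handled by showing that no mutual visibility set of the forbidden size can contain a suitable central vertex, which by Lemma~\ref{lem:mobility} means that vertex is never visited. Your shortcut for $P_4\cp P_4$ (a mutual visibility set of size $8$ saturates every row and column, so no legal move exists at all) is valid and is in fact the same device the paper uses for square grids in Theorem~\ref{largecartgrids}; the paper instead derives $\mobmv(P_4\cp P_4)\le 7$ from its $P_4\cp P_3$ analysis, but either route works.

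There are, however, two genuine gaps. First, you explicitly leave open the upper bound $\mobmv(P_5\cp P_4)\le 7$, which is needed for the $n=5$ entry of the second formula; this is not harder than the other small cases. The paper's argument: if $|S|=8$ and $(3,2)\in S$, then exactly two robots sit in each of the four columns $[5]\times\{j\}$, and one checks the possible positions of the second robot in column $2$. If it is $(2,2)$, mutual visibility forces $(1,1),(1,3),(5,1),(5,3)\in S$, leaving no room for two robots in column $4$; if it is $(1,2)$, the two robots of column $1$ force $(1,1)\in S$ and symmetrically $(1,3)\in S$, which is already three robots in row $1$; the positions $(4,2)$ and $(5,2)$ follow by the reflection $x\mapsto 6-x$. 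Second, all of the lower bounds are deferred to unexhibited configurations and move sequences. This is not mere bookkeeping: the tight configurations (seven robots on $P_4\cp P_4$ and $P_5\cp P_4$, six on $P_5\cp P_3$, and the case $n\ge 6$ which requires a shifting argument as in Theorem~\ref{largecartgrids} rather than a single fixed sweep) are exactly where the content of the equalities lies, and the paper devotes most of its proof to writing them down and verifying that visibility is preserved at every intermediate step. Until those witnesses are supplied, only the upper bounds (minus $P_5\cp P_4$) are actually established.
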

	\begin{proof}
		To prove the upper bounds, we find the cardinalities of largest mutual visibility sets containing a central vertex of the grid.
		
		Firstly, suppose we had a mutual visibility set $S$ of $P_3 \cp P_3$ with $|S| = 5$, $(2,2) \in S$. We can suppose that $2$ vertices in $S$ have second coordinate $1$. In order to maintain mutual visibility, $(2,2)$ can be the only vertex in $S$ with second coordinate $2$. So there are $2$ vertices in $S$ with second coordinate $3$. It is easily seen that these vertices cannot be mutually visible and so $\mobmv(P_3 \cp P_3) \leq 4$.
		
		To see that $\mobmv(P_3 \cp P_3) = 4$, station robots at $(1,2), (2,1), (2,3), (3,2)$ and move $(2,1) \move (3,1)$, $(2,3) \move (1,1)$, and then $(1,2) \move (2,2)$. Since the positions of the robots may be permuted by moving them around the edge of the grid, in fact $\cmobmv(P_3 \cp P_3) = 4$.
		
		Next, suppose we had a mutual visibility set $S$ of $P_4 \cp P_3$ with $|S| = 6$, $(2,2) \in S$. Exactly two vertices in $S$ have each possible second coordinate. By the above reasoning, there can be at most four vertices in $S$ with first coordinate $<4$, so exactly two vertices in $S$ have first coordinate $4$. Similarly if $(3,2) \in S$, there are exactly two vertices in $S$ with first coordinate $1$ and the vertices cannot be mutually visible. If $(1,2) \in S$, there can only be at most one in $S$ with second coordinate $1$, a contradiction. Finally, if $(4,2) \in S$ we can assume without loss of generality that $(4,1) \in S$ and $(4,3) \notin S$. But then $(4,1)$ and the vertex in $S$ with second coordinate $3$ and smallest first coordinate will not be mutually visible. Thus, there can be no such $S$ and $\mobmv(P_4 \cp P_3) \leq 5$.
		
		To establish equality, station robots at $(1,2), (1,3), (2,1), (4,2), (4,3)$ and move $(1,2) \move (2,2)$, $(2,1) \move (1,1)$, and then $(1,3) \move (2,3)$. Since the positions of the robots may be permuted by moving them around the edge of the grid, in fact $\cmobmv(P_4 \cp P_3) = 4$.
		
		It also follows that any mutual visibility set of $P_4 \cp P_4$ containing $(1,1)$ can have at most five vertices with first coordinate $<4$ and so $\mobmv(P_4 \cp P_4) \leq 7$.
		
		To see that $\mobmv(P_4 \cp P_4) = 7$, station robots at the vertices \[ (1,2), (1,3), (2,4), (3,1), (3,4), (4,2), (4,3)\] and move $(1,2) \move (2,2)$, $(2,4) \move (1,4)$. From the initial configuration we may move $(3,1) \move (2,1)$ or $(1,2), (1,1), (2,1)$ and deduce the result by symmetry. We can use moves similar to those in Theorem~\ref{largecartgrids} to permute the robots, so that $\cmobmv(P_4 \cp P_4) = 7$.
		
		Now suppose we had a mutual visibility set $S$ of $P_5 \cp P_4$ with $|S| = 8$, $(3,2) \in S$. Exactly two vertices in $S$ have each possible second coordinate. If $(2,2) \in S$, we also have $(1,1), (1,3), (5,1), (5,3) \in S$. But then $S$ cannot contain two vertices with second coordinate $4$, a contradiction. If $(1,2) \in S$ then as two vertices in $S$ have second coordinate $1$, $(1,1) \in S$. But similar reasoning gives $(1,3) \in S$, a contradiction. Hence, $\mobmv(P_4 \cp P_5) \leq 7$. Equality follows from the result for $P_4 \cp P_4$ by applying the `shifting' argument used in the proof of Theorem~\ref{largecartgrids}. We also see that $\cmobmv(P_5 \cp P_4) = 7$.
		
		For $n \geq 2$, since at most two vertices in any mutual visibility set can have the same second coordinate, it is clear that $\mobmv(P_n \cp P_3) \leq 6$ and $\mobmv(P_n \cp P_4) \leq 8$.
		
		To see that $\mobmv(P_5 \cp P_3) = 6$, position six robots as in the proof of Theorem~\ref{largecartgrids}. Then perform the moves $(3,1) \move (4,1) \move (5,1)$, $(3,3) \move (4,3) \move (5,3)$, $(2,1) \move (1,1)$, $(1,2) \move (2,2)$, $(2,3) \move (1,3)$ and then $(2,2) \move (3,2)$. Using similar moves similar to those in Theorem~\ref{largecartgrids} to permute the robots, we see that $\cmobmv(P_5 \cp P_3) = 5$.
		
		Finally, we show that $\mobmv(P_6 \cp P_4) = 8$. Position eight robots as in the proof of Theorem~\ref{largecartgrids}. We may follow the sequences in that proof to traverse all vertices except $(3,2), (3,3)$, and $[1] \times [4]$. However, by moving the robots to the right in decreasing order of the first coordinates, we see that the remaining vertices can be traversed. Moves similar to those in Theorem~\ref{largecartgrids} show that $\mobmv(P_6 \cp P_4) = 7$ and $\mobmv(P_5 \cp P_3) = 5$.
	\end{proof}
	
	\section{Completely mobile position sets}\label{sec:completely mobile}
	
	Imagine a swarm of robots delivering groceries to customers. If the robots form a mobile general position set, then they can communicate freely and every house can be visited by a robot. However, if some houses can only be visited by certain robots, then it would be difficult to send the right groceries with the right robot, it may require a long time for some houses to be visited and some robots may exceed their carrying capacity. 
	
	Therefore, a more realistic requirement is that \emph{every} robot can visit \emph{every} vertex. We call the mobile general position problem with this added restriction the \emph{completely mobile general position problem}. The largest number of robots in a completely mobile general position set of $G$ is the \emph{completely mobile general position number} of $G$ and we will denote it by $\mob ^*(G)$. Naturally we must assume the graph to be connected. The completely mobile mutual visibility number $\mobmv ^*(G)$ is defined analogously. We will use the following criterion to show that a mobile $\pi $-configuration is completely mobile.

	\begin{lemma}\label{lem:swapping}
		A mobile $\pi $-configuration $(u_1,\dots ,u_t)$ is a completely mobile $\pi $-configuration if for any $1 \leq i,j \leq t$ there is a sequence of legal moves that returns the robots to the set $\{ u_1,\dots ,u_t\} $, but with the positions of the robots permuted so that robot $R_i$ is at vertex $u_j$.
	\end{lemma}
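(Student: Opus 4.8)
The plan is to exploit the fact that whether a move $u \move u'$ is legal depends only on which vertices are occupied---through the $\pi$-set condition---and not on which robot sits at which vertex. Hence any sequence of legal moves that is valid for one labelling of the robots on a given occupied set is still a valid sequence of legal moves for any relabelling of the robots on that same occupied set. Combined with the hypothesis, this lets us ``transplant'' a traversal of a prescribed vertex onto an arbitrary robot. I would also use that legal moves are reversible: if $u \move u'$ is legal from a $\pi$-set $S$, landing on $S' = (S-\{u\})\cup\{u'\}$, then $u' \move u$ is legal from $S'$ and returns the swarm to $S$; so any finite sequence of legal moves can be run backwards.

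In detail, fix a vertex $v \in V(G)$. Since $(u_1,\dots,u_t)$ is a mobile $\pi$-configuration, Lemma~\ref{lem:mobility} supplies a sequence $\sigma_v$ of legal moves, starting from $(u_1,\dots,u_t)$ and keeping the swarm a $\pi$-set throughout, after which $v$ is occupied; let $R_k$ be the robot occupying $v$ at the end, so $R_k$ started at $u_k$. Now take an arbitrary robot $R_i$. First apply the sequence of legal moves given by the hypothesis with the index $j$ set to $k$: this brings the swarm back to the vertex set $\{u_1,\dots,u_t\}$, with $R_i$ now sitting at $u_k$ (and the other robots at the remaining vertices of $\{u_1,\dots,u_t\}$ in some order, which is irrelevant). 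Because the occupied set is again exactly $\{u_1,\dots,u_t\}$, the sequence $\sigma_v$---read purely as instructions on vertices---remains legal step by step, and applying it sends the robot currently at $u_k$, namely $R_i$, along the same trajectory as before to $v$. Thus $R_i$ visits $v$.

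To assemble these into a single legal traversal, after $R_i$ has reached $v$ I would run $\sigma_v$ in reverse and then the swapping sequence in reverse; by reversibility this returns the swarm to the configuration $(u_1,\dots,u_t)$ with its original labelling. Concatenating these ``visit-and-return'' blocks over all $t\cdot|V(G)|$ pairs $(R_i,v)$ produces one sequence of legal moves, starting and ending at $(u_1,\dots,u_t)$, during which every robot visits every vertex, with the swarm a $\pi$-set at every stage. This is exactly the definition of $(u_1,\dots,u_t)$ being a completely mobile $\pi$-configuration.

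The only real content---and the point to state carefully---is the observation that legality of a move is a property of the occupied vertex set alone, which is what allows the permuted configuration produced by the hypothesis to inherit verbatim the traversal $\sigma_v$ obtained from Lemma~\ref{lem:mobility}. Once that is made explicit, the remaining steps (reversibility of legal moves, finiteness of the number of blocks, and chaining them with returns to the base configuration in between) are entirely routine.
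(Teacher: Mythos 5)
Your proof is correct and is exactly the argument the paper leaves implicit (the lemma is stated without proof there): you combine the reversibility of legal moves, already used for Lemma~\ref{lem:mobility}, with the key observation that legality depends only on the occupied vertex set, so the permutation hypothesis lets any robot inherit the traversal $\sigma_v$. Nothing is missing.
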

	
	In Subsection~\ref{subsec:completely mobile gp} we discuss completely mobile general position sets and we treat completely mobile mutual visibility sets in Subsection~\ref{subsec:completely mobile mv}.
	
	\subsection{Completely mobile general position sets}\label{subsec:completely mobile gp}
	
	We begin by characterising the graphs with extremal values of the completely mobile general position number. 
	
	\begin{proposition}\label{prop:comp mob small values}
		A connected graph has $\mob ^*(G) = 1$ if and only if it is a path. All graphs with order $n \geq 2$ satisfy $\mob ^*(G) \leq n-1$, with equality only for complete graphs. A graph $G$ has $\mob (G) = n$ if and only if $G$ is a clique, and $\mob (G) = n-1$ if and only if $G$ is isomorphic to a $K_{n-1}$ with a leaf attached.
	\end{proposition}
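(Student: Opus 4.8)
The plan is to establish the four assertions in order, proving the two facts about $\mob ^*$ first, since the description of $(n-1)$-element general position sets obtained along the way is exactly what is needed for the statement about $\mob (G)=n-1$.

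For ``$\mob ^*(G)=1$ iff $G$ is a path'': one direction is essentially free, since a single robot can traverse a spanning closed walk, so $\mob ^*(G)\ge 1$ always, while on a path two robots can never reverse their relative order along the path, so the one starting nearer the left end can never reach the right end; hence $\mob ^*(P_n)=1$. For the converse I argue the contrapositive: a connected non-path $G$ is either a cycle or has a vertex of degree at least three. On a cycle I place two robots and observe that by first rotating one of them the long way round, the other can be brought to any prescribed vertex (and symmetrically), so $\mob ^*\ge 2$. If instead some $v$ has three neighbours $x,y,z$, I greedily extend the star $\{vx,vy,vz\}$ to a spanning tree $T$; then $v$ is a cut vertex of $T$ with at least three branches, and with one robot parked at $v$ and one in a branch I can always evacuate $v$ into a branch that does not obstruct the target and then walk the other robot through $v$ to any vertex. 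Since the general-position condition is vacuous for two robots, any legal sequence on $T$ is legal on $G$, so $\mob ^*(G)\ge \mob ^*(T)\ge 2$.

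For ``$\mob ^*(G)\le n-1$, with equality only for complete graphs'': the inequality is immediate, as with $n$ robots no move is possible. The equality $\mob ^*(K_n)=n-1$ is routine, the single free vertex allowing the $n-1$ robots to permute arbitrarily. Conversely, from a completely mobile configuration of $n-1$ robots, every vertex must at some stage be the (unique) free vertex --- this is the key observation, following because each robot must visit every vertex --- so the occupied set is then an $(n-1)$-element general position set, i.e. $V(G)-\{w\}$ is a general position set for all $w$. If $G$ were not complete I pick non-adjacent $a,b$ and a shortest $a,b$-path $P$ of length at least two: were some vertex $w$ off $P$, then $V(G)-\{w\}$ would contain three consecutive vertices of $P$ on a shortest path, a contradiction; so $P$ is Hamiltonian, $d(a,b)=n-1$, and $G=P_n$, which by the first part gives $1=\mob ^*(G)=n-1$, i.e. $n=2$ and $G=K_2$ is complete after all.

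For ``$\mob (G)=n$ iff $G$ is complete'': with $n$ robots the only configuration is $V(G)$, a general position set precisely when no shortest path has three vertices, i.e. when $\diam(G)\le 1$. For ``$\mob (G)=n-1$ iff $G$ is $K_{n-1}$ with a pendant leaf'' (for $n\ge 3$), note that a mobile configuration of $n-1$ robots must in its first move push a robot onto the sole free vertex, so $\mob (G)\ge n-1$ is equivalent to the existence of an edge $uv$ with both $V(G)-\{u\}$ and $V(G)-\{v\}$ general position sets. I then unpack this: $V(G)-\{v\}$ being a general position set forces $\diam(G)\le 2$ and that any two non-adjacent vertices different from $v$ have $v$ as their \emph{unique} common neighbour; applying this at both $u$ and $v$ shows every non-adjacent pair meets $\{u,v\}$, so $V(G)-\{u,v\}$ is a clique, and then the uniqueness clause forces one of $u,v$ to be adjacent to all other vertices and the other to be a leaf hanging off it --- which is exactly $K_{n-1}$ with a pendant. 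The reverse implication is a one-move check together with $\mob (G)\neq n$ from the previous paragraph. I expect the common-neighbour bookkeeping in this last step --- teasing out that the ``extra'' vertex must be a leaf --- to be the one place requiring genuine care.
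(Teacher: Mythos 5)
Your proposal is correct, and for the harder parts it takes a genuinely more self-contained route than the paper. The paper's own proof handles the degree-$\geq 3$ case explicitly for the path characterisation (your treatment is essentially the same swap-at-a-branch-vertex argument, though you also cover the cycle case, which the paper's case split technically omits), and then disposes of the statements about $\mob ^*(G)=n-1$, $\mob (G)=n$ and $\mob (G)=n-1$ by invoking the known classification from~\cite{ullas-2016} of graphs with $\gp (G)=n-1$ (a join of $K_1$ with a disjoint union of cliques, or $K_n$ minus a star) and declaring the verification routine. You instead derive the structure directly: the observation that in a completely mobile configuration of $n-1$ robots every vertex must eventually be the unique free vertex, so that $V(G)-\{ w\} $ is a general position set for every $w$, gives a clean two-line proof that only complete graphs attain $\mob ^*(G)=n-1$; and your reduction of $\mob (G)\geq n-1$ to the existence of an edge $uv$ with both $V(G)-\{ u\} $ and $V(G)-\{ v\} $ in general position, followed by the unique-common-neighbour analysis forcing $V(G)-\{ u,v\} $ to be a clique with one of $u,v$ dominating and the other a pendant, recovers exactly the relevant fragment of the~\cite{ullas-2016} classification without citing it. What this buys is independence from an external result at the cost of the common-neighbour bookkeeping you flag; the one place to be slightly careful is that the ``three vertices of the set on one shortest path'' contradiction needs $|V(G)-\{ v\} |\geq 3$, so the orders $n\leq 3$ should be checked by hand (they are immediate: $P_3$ is $K_2$ with a leaf, and $K_3$ has $\mob =3$).
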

	\begin{proof}
		If there are two robots $R_1$ and $R_2$ on a path $P_n$, robot $R_2$ can never visit the leaf closest to $R_1$, so $\mob ^* (P_n) = 1$. Suppose that a connected graph $G$ contains a vertex $u$ with $\deg (u) \geq 3$. Then to show that $\mob ^*(G) \geq 2$, it is sufficient by Lemma~\ref{lem:swapping} to show that any two robots $R_1,R_2$ in $G$ can swap positions. Suppose that the distance from $R_1$ to $u$ is no greater than the distance from $R_2$ to $u$. Move $R_1$ to $u$ along a shortest path, and then $R_2$ to $N(u)$, say to a neighbour $u_1$ of $u$. If $u_2,u_3$ are additional neighbours of $u$, then moving $R_1$ to $u_2$, $R_2$ to $u$ and then $u_3$, then $R_1$ to $u$ and then $u_1$, and then $R_2$ back to $u$, and finally retracing the steps that brought them to $\{ u,u_1\} $, has swapped their positions.
		
		For the remainder of the result we need only look at complete graphs and the graphs with $\gp (G) = n(G)-1$; it shown in~\cite{ullas-2016} that in the latter case $G$ is either i) a join of $K_1$ with a disjoint union of cliques, or ii) a complete graph $K_n$ with $k$ edges incident to a common vertex deleted for some $1 \leq k \leq n-2$. Complete graphs have a mobile general position set of order $n$, but only $n-1$ robots are completely mobile, since $n$ robots cannot make any move. Checking the remaining graphs is routine. 
	\end{proof}
	
	Trivially for any graph we have $\mob ^*(G) \leq \mob (G)$. For the graphs studied in~\cite{klavzar-2023} we have $\mob ^*(G) = \mob (G)$ or $\mob (G) -1$. In particular, it is easily seen that the configurations given in~\cite{klavzar-2023} for the graphs $K(n,2)$ and $L(K_n)$ are completely mobile, so that $\cmob (K(n,2) = \mob (K(n,2))$ for $n \geq 5$ and $\cmob (L(K_n)) = \mob (L(K_n))$ for $n \geq 4$. For block graphs that contain a triangle one robot must be dropped to ensure complete mobility.
	
	\begin{proposition}\label{prop:blockgraph cmobmv}
		If $G$ is a block graph with clique number $\omega (G) \geq 3$, then \[ \cmob (G) = \cmobmv (G) = \omega (G) -1.\]
	\end{proposition}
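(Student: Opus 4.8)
The plan is to establish the chain $\omega(G)-1 \le \cmob(G) \le \cmobmv(G) \le \omega(G)-1$; the middle inequality is automatic, so it suffices to prove the two outer ones. Throughout write $\omega = \omega(G)$, and recall that in a block graph every block is a clique, so a maximum clique $W$ is itself a block, and if $G \neq K_\omega$ then (since $G$ is connected) $W$ contains a cut vertex.

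For the lower bound, Theorem~\ref{thm:cliquevsmobmvnum} already gives $\cmobmv(G) \ge \omega - 1$. I would obtain $\cmob(G) \ge \omega - 1$ by reusing the configuration from the proof of that theorem --- $\omega - 1$ robots on a maximum clique $W$, one of which travels along a shortest path $v_0 v_1 \cdots v_r$ from $W$ (with $v_0 \in W$) to a prescribed far vertex --- and checking that in a \emph{block} graph this is in fact a general position set at every stage. Indeed, since $W$ is a block, $v_0$ is forced to be the cut vertex of $W$ separating $W$ from $v_r$, whence $d(w,v_k)=k+1$ for every $w \in W \setminus \{v_0\}$ and every $k \ge 1$; a one-line case check then shows that no shortest path can contain $v_k$ together with two vertices of $W$, so the robots remain in general position. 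As the $\omega-1$ robots may be freely permuted inside $W$ using its single free vertex, Lemma~\ref{lem:swapping} gives $\cmob(G) \ge \omega - 1$.

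For the upper bound I would prove $\cmobmv(G) \le \omega - 1$ by induction on $|V(G)|$. The base case $G = K_\omega$ is clear, since $\omega$ robots fill the graph and cannot move, whereas $\omega-1$ robots can permute and traverse everything. For the inductive step, assume $G$ is not complete and suppose for contradiction that $S$ is a completely mobile mutual visibility configuration with $t = |S| \ge \omega$ robots. Fix a leaf block $B$ of $G$ with cut vertex $c$ (one exists as the block-cut tree has a leaf), and let $G_C = G - (B \setminus \{c\})$, a connected block graph on fewer vertices with $\omega(G_C) \le \omega$. The key point is that \emph{at most one robot ever occupies $B \setminus \{c\}$}: a robot leaving $B \setminus \{c\}$ must pass through $c$, but if another robot remains in $B \setminus \{c\}$ then a robot at $c$ destroys the visibility between that robot and any robot in $V(G_C)-\{c\}$, forcing all $t \ge \omega \ge |B|$ robots into $B$; then $B$ is full and $c$ occupied, so no such move is possible --- hence robots that are ever in $B\setminus\{c\}$ together would never reach $G_C$, contradicting complete mobility. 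Using in addition that a robot at $c$ forces all other robots into a single component of $G-c$, one also gets that whenever a robot sits in $B\setminus\{c\}$ the vertex $c$ is unoccupied. These two facts make the following \emph{projection} well defined: replace ``the parked robot is at some vertex of $B\setminus\{c\}$'' by ``it is at $c$''; every legal $G$-move then becomes either a legal $G_C$-move or a pause, mutual visibility is preserved because paths between vertices of $G_C$ avoid $B\setminus\{c\}$, and every robot still visits every vertex of $G_C$ with the permutation condition intact. So $S$ projects to a completely mobile mutual visibility configuration of $G_C$, giving $t \le \cmobmv(G_C)$. If $\omega(G_C) \ge 3$ the inductive hypothesis yields $\cmobmv(G_C) \le \omega(G_C)-1 \le \omega-1$; if $\omega(G_C) \le 2$ then $G_C$ is a tree (or a point) and $\cmobmv(G_C) \le 2 \le \omega-1$ by~\cite{dettlaff2024}. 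Either way $t \le \omega-1$, a contradiction.

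The step I expect to need the most care is the projection: one must verify that collapsing the (at most one) parked robot's excursions into $B\setminus\{c\}$ genuinely turns the $G$-motion into a \emph{legitimate} completely mobile motion in $G_C$ --- in particular that no intermediate projected configuration fails mutual visibility, and that the permutability required by Lemma~\ref{lem:swapping} survives the projection (this is easiest if one first moves any initially parked robot to $c$). The supporting facts that at most one robot is ever inside $B\setminus\{c\}$ and that $c$ is free during such a visit are exactly what prevents collisions under the projection (two simultaneously parked robots, or a robot at $c$ on top of a parked one, would both project onto $c$), so these observations are the real heart of the argument.
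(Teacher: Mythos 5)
Your lower bound is fine and matches the paper's (the paper simply notes that block graphs are geodetic, so general position and mutual visibility sets coincide, and quotes Theorem~\ref{thm:cliquevsmobmvnum}; your direct check that the clique-plus-traveller configuration stays in general position is a correct unpacking of the same fact). Your two supporting observations for the upper bound --- at most one robot is ever in $B \setminus \{c\}$, and $c$ is free while that robot is parked --- are also correct and provable along the lines you sketch. The gap is in the projection itself, exactly at the point you flagged as delicate. Collapsing the parked robot to $c$ prevents collisions, but it \emph{creates} a phantom robot at the cut vertex $c$, and a block graph is geodetic: if two other robots sit in different components of $G - c$ (both inside $G_C$), their unique shortest path passes through $c$. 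In the true configuration this is harmless because $c$ is free during the excursion, but in the projected configuration the phantom at $c$ blocks them, so the projected configuration need not be a mutual visibility set of $G_C$. Your facts only guarantee compatibility at the instants the parked robot enters or leaves $B \setminus \{c\}$ (at those moments a real robot occupies $c$, so mutual visibility in $G$ already forbids $c$-separated pairs); nothing prevents the other robots from moving into $c$-separated positions \emph{during} the excursion, and those intermediate projected configurations break. Your sentence ``mutual visibility is preserved because paths between vertices of $G_C$ avoid $B \setminus \{c\}$'' explains why deleting $B \setminus \{c\}$ loses no paths, not why inserting a robot at $c$ blocks none; the latter is the real issue and is not resolved.

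The paper avoids induction entirely. It first shows that from any completely mobile configuration the robots can be gathered into a single clique: take a reachable configuration minimising the sum of pairwise distances; if two robots are non-adjacent, the internal vertices of the unique path between them are cut vertices, one component of $G$ minus the penultimate vertex contains all but one robot, so moving the outlying robot one step inward preserves mutual visibility and decreases the sum. This bounds the number of robots by $\omega(G)$, and a maximum clique filled with $\omega(G)$ robots is not completely mobile because each robot's only available move is into a private branch hanging off its own cut vertex, which no other robot can ever enter. If you want to keep the leaf-block induction, you would need an additional rescheduling argument ensuring the excursion into $B \setminus \{c\}$ takes place only while no two remaining robots are separated by $c$; as written, the projection does not yield a legal sequence of mutual visibility configurations in $G_C$.
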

	\begin{proof}
		Let $G$ be as stated. It follows from Theorem~\ref{thm:cliquevsmobmvnum} that $\cmobmv (G) \geq \omega (G) - 1$. A block graph is geodetic, so a subset is a mutual visibility set if and only if it is in general position, so that $\cmobmv (G) = \cmob (G)$. 
		
		For the upper bound, we show that we can gather all the robots into a clique. From the starting configuration, move the robots into a configuration that minimises the sum of the distances between them, and suppose for a contradiction that there are two robots $R_1$ and $R_2$ that are not adjacent in this configuration. Let $u_0,u_1,\dots ,u_r$ be the shortest path from $R_1$ to $R_2$; all of $u_1,\dots ,u_{r-1}$ are cut-vertices. One of the components of $G-u_{r-1}$ must contain all but one of the robots (say this component is the one containing $R_2$), so moving $R_1$ along $P$ to $u_{r-1}$ would keep the robots mutually visible at all times, and reduce the sum of the distances. Hence, if $\cmobmv (G) = \omega (G)$, we can suppose that the robots occupy a maximum clique $W$ on vertices $w_1,\dots ,w_{\omega }$. Then it is easily seen that, for any $1 \leq i \leq \omega (G)$, the only move available to the robot at $u_i$ is to move into one of the components of $G-u_i$ other than the component containing $W-u_i$. Whilst the robot is visiting these other components, no other robot can move to $u_i$; it follows that no other robot can ever visit $u_i$, and hence the configuration is not completely mobile. Thus $\cmobmv (G) = \omega (G) - 1$.
	\end{proof}
	
	It was an open question whether the difference $\mob (G) - \cmob(G)$ can be greater than one. However, it turns out that the difference between the two numbers can be arbitrarily large. We now characterise all possible combinations using a variation on half graphs.
	
	\begin{theorem}\label{thm:realisation comp mob gp}
		For any $r,s$ with $2 \leq s \leq r$ there is a graph with $\mob ^*(G) = s$ and $\mob (G) = r$.
	\end{theorem}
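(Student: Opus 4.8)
The plan is to give, for every pair $2\le s\le r$, one explicit graph $G_{r,s}$ and to verify the four inequalities $\mob(G_{r,s})\ge r$, $\mob(G_{r,s})\le r$, $\mob^*(G_{r,s})\ge s$ and $\mob^*(G_{r,s})\le s$ one at a time. The graph I would build is a clique $W=\{w_1,\dots,w_r\}$ together with a ``half-graph tail'': several pendant paths whose end vertices are attached to nested initial and terminal segments of $W$ (a staircase of neighbourhoods, as in a half graph), the lengths of the segments being chosen as functions of $s$ and $r$. The point of the clique is that $r$ robots resting on it (or on $W$ with one robot pushed a single step out onto the tail) can reach each tail vertex by sending out a single robot along a shortest path, so $G_{r,s}$ has an $r$-robot mobile configuration. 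The point of the staircase is the general-position obstruction it creates: if a robot occupies a pendant vertex $z$, then for every clique vertex $w_j\notin N(z)$ all shortest $z$--$w_j$ paths pass through $N(z)\cap W$, so no clique vertex of $N(z)$ may be occupied while any vertex of the complementary segment is; nesting the segments so that these constraints always leave at most $s-1$ free clique vertices is exactly what will trap the robots.

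For $\mob(G_{r,s})\ge r$ I would station $r$ robots as above and argue as in the proof of Theorem~\ref{thm:mobgp vs clique number}: a robot travelling out along a branch of the tail stays equidistant, up to $\pm 1$, from the robots left behind in $W$, so general position is preserved along the whole trip; making one such trip per pendant vertex, interleaved with moves inside $W$, visits every vertex, and Lemma~\ref{lem:mobility} then gives mobility. For $\mob(G_{r,s})\le r$ I would classify the general-position sets of $G_{r,s}$: the obstruction above shows that a set containing a pendant vertex can contain neither a vertex of its clique-neighbourhood together with a vertex of the complementary segment, nor a second pendant vertex with a common clique neighbour; feeding these restrictions together with the trivial bound $|S\cap W|\le r$ into an analysis of the (few) possible configurations shows that any \emph{mobile} general-position configuration has at most $r$ robots, even though, as one checks, non-mobile general-position sets can be far larger.

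For $\mob^*(G_{r,s})\ge s$ I would place $s$ robots on the sub-clique $\{w_1,\dots,w_s\}$ of $W$. Because $W$ has $r\ge s$ vertices, these robots always have a spare clique vertex to move through, so they can realise every permutation of their positions inside $W$; by Lemma~\ref{lem:swapping} it then remains only to let each of them visit every pendant vertex, which is done by the same shuttle argument as above, and with only $s$ robots present the general-position obstruction against a pendant vertex is never triggered, so the trips are always legal. (If the segment lengths of the construction are tuned so that the clique-number bound of Theorem~\ref{thm:mobgp vs clique number} already evaluates to $s$, this lower bound is immediate instead.)

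The main obstacle is the upper bound $\mob^*(G_{r,s})\le s$. I would argue by contradiction from a completely mobile configuration of $s+1$ robots and look at the first moment a robot $R$ reaches a deepest pendant vertex $z$. General position then empties all but at most $s-1$ of the clique vertices, so the other $s$ robots cannot all lie in $W$; the half-graph staircase is designed precisely so that they cannot be distributed over the other pendant branches either (two robots deep in different branches violate general position in the same way). Pushing this through requires choosing the segment lengths so that \emph{every} admissible ``robot on $z$'' configuration is impossible with $s+1$ robots, and then checking --- this is the delicate part --- that no legal move can ever manoeuvre the configuration into such a position: one wants a monovariant, for instance the set of clique vertices that a given robot can still reach, or a distance potential along the tail, and one must verify that it cannot cross the threshold, so that $z$ is never visited by $R$ and the configuration fails to be completely mobile. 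Getting this invariant and the exact segment lengths to match up, so that all four bounds hold simultaneously for the single graph $G_{r,s}$, is where essentially all the work lies.
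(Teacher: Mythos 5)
There is a genuine gap, and it is located in the very first design decision: you build $G_{r,s}$ around a clique $W$ of order $r$. This is incompatible with the bounds proved earlier in this paper. Theorem~\ref{the:clique num 5} states that every graph with $\omega(G)\geq 5$ has $\mob^*(G)\geq 3$, so for $s=2$ and $r\geq 5$ no graph containing $K_r$ can have $\mob^*(G)=2$; your construction cannot produce the case $s=2$, $r\geq 5$ at all. More generally, Theorem~\ref{thm:mobgp vs clique number} forces $\mob^*(G)\geq 1+\lceil \omega(G)/\diam(G)\rceil-\theta$, so keeping $\mob^*=s$ while $\omega=r$ requires pendant paths of length on the order of $r/s$, and this in turn undermines your lower bound $\mob(G_{r,s})\geq r$. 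Indeed, the obstruction you correctly identify for the upper bounds --- if a robot sits at a pendant vertex $z$ with $\emptyset\neq N(z)\cap W\neq W$, then for any $w_1\in N(z)\cap W$ and $w_2\in W\setminus N(z)$ the three robots at $z,w_1,w_2$ lie on a common shortest path --- applies equally to your $r$-robot configuration: once one robot steps off the clique onto such a $z$, the remaining $r-1$ robots occupy all but one vertex of $W$ and are therefore split across $N(z)\cap W$ and its complement, violating general position immediately. The ``equidistant shuttle'' from the proof of Theorem~\ref{thm:mobgp vs clique number} only works when the robots left behind share a common distance sequence to the travelling robot, which is exactly why that theorem yields only about $\omega/\diam$ robots rather than $\omega$. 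So the staircase that is supposed to cap $\mob^*$ at $s$ simultaneously destroys the claim $\mob(G_{r,s})\geq r$; the two halves of your plan pull in opposite directions. The remaining steps ($\mob\leq r$ via a classification of general position sets, and $\mob^*\leq s$ via an unspecified invariant) are also only sketched, but the construction fails before they are reached.

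The paper takes the opposite route precisely to avoid this tension: its graphs $G(r)$ and $G(r,s)$ are \emph{triangle-free} half graphs, so no clique-based lower bound on $\mob^*$ interferes. There the engine for $\mob\geq r$ is not a clique but a cascade of moves $b_r\move a_r,\ b_{r-1}\move a_{r-1},\dots$ in which all $r$ robots participate, and the upper bounds come from the fact that pairs such as $\{a_i,b_j\}$ with $i\leq j$ are \emph{maximal} general position sets, which pins every robot to a distinct index and makes the case analysis for $\mob^*\leq s$ tractable. If you want to salvage your approach, the lesson is that the mobile lower bound must come from a structure in which all $r$ robots can move cooperatively, not from parking $r$ robots on a clique and sending one out.
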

	\begin{proof}
		If $r = s$, the complete $(r+1)$-partite graph in which each part has two vertices has the required mobile and completely mobile general position numbers. We now show the result for $s = 2$ and $r > 2$ using the half graph defined as follows. Set $V(G(r)) = \{ a_i,b_i:1 \leq i \leq r\} $ and $E(G(r)) = \{ a_ib_j : 1 \leq i \leq j \leq r\} $. We claim that $G(r)$ has $\mob (G(r)) = r$ and $\mob ^*(G(r)) = 2$. 
		
		\begin{figure}[h!]
			\centering
			\begin{tikzpicture}[x=0.2mm,y=-0.2mm,inner sep=0.5mm,scale=1,thick,vertex/.style={circle,draw,minimum size=15}]
				\foreach \i in {1, 2, ..., 5}{
					\node at (\i*70-70,0) [vertex] (a\i) {$a_\i$};
					\node at (\i*70-70,70) [vertex] (b\i) {$b_\i$};
					\foreach \j in {1, ..., \i}{
						\path (a\j) edge (b\i);
					}
				}	
			\end{tikzpicture}
			\caption{The graph $G(5)$.}
			\label{fig:G(5)}
		\end{figure}
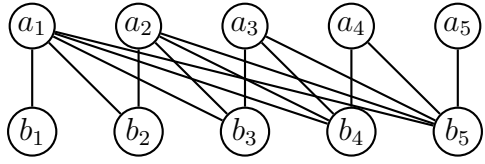  
		
		First we note that for $1 \leq i \leq j \leq r$ the set $\{ a_i,b_j\} $ is a maximal general position set of $G(r)$. Hence if there are at least three robots on $G(r)$, then the robots are always at vertices with distinct subscripts. This implies that $\mob (G(r)) \leq \gp (G(r)) = r$. To prove equality, station a robot $R_i$ on $b_i$ for $1 \leq i \leq r$ and perform the moves \[ b_r \move a_r, b_{r-1} \move a_{r-1}, \cdots ,b_1 \move a_1 \] in this order. It is easily seen that the robots are always in general position and every vertex has been visited after $r$ moves. 
		
		Now suppose for a contradiction that $G(r)$ has a completely mobile general position configuration with $t \geq 3$ robots $R_1,\dots ,R_t$. Suppose that initially $R_1$ has the smallest subscript, and another robot, say $R_2$, is the next robot to visit $b_1$. Then at some point $R_1$ has index $i$ and $R_2$ makes a move $b_j \move a_k$, where $1 \leq k < i < j \leq r$. $R_1$ cannot have been at $a_i$ before this move, since $\{ a_i,b_j\} $ is a maximal general position set of $G(r)$, so $R_1$ must have been at $b_i$. However, this implies that after the move of $R_2$, $R_1$ and $R_2$ occupy a set $\{ a_k,b_i\} $, where $k < i$, a contradiction. Hence $\mob ^*(G(r)) = 2$.
		
		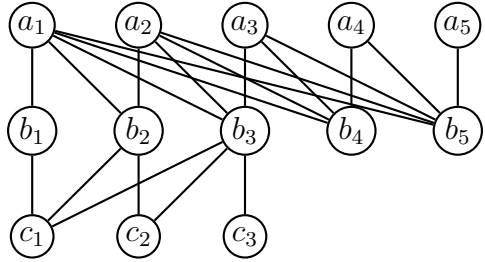
\begin{figure}[h!]
			\centering
			\begin{tikzpicture}[x=0.2mm,y=-0.2mm,inner sep=0.5mm,scale=1,thick,vertex/.style={circle,draw,minimum size=15}]
				\foreach \i in {1, 2, ..., 5}{
					\node at (\i*70-70,0) [vertex] (a\i) {$a_\i$};
					\node at (\i*70-70,70) [vertex] (b\i) {$b_\i$};
					\ifnum\i<4
					\node at (\i*70-70,140) [vertex] (c\i) {$c_\i$};
					\fi
					\foreach \j in {1, ..., \i}{
						\path (a\j) edge (b\i);
						\ifnum\i<4
						\path (c\j) edge (b\i);
						\fi
					}
				}	
			\end{tikzpicture}
			\caption{The graph $G(5,3)$.}
			\label{fig:G(5,3)}
		\end{figure}  
		
		Now suppose that $r > s > 2$. We define the graph $G(r,s)$ by adding a set of $s$ vertices $C = \{ c_i : 1\leq i \leq s \} $ to $G(r)$ and adding the edge $c_ib_j$ whenever $1 \leq i \leq j \leq s$. An example is shown in Figure~\ref{fig:G(5,3)}. We claim that $G(r,s)$ has $\mob (G(r,s)) = r$ and $\mob ^*(G(r,s)) = s$. Note that $\{ a_i,b_j\} $ is again a maximal general position set for $1 \leq i \leq j \leq r$, as is $\{ c_i,b_j\} $ when $1 \leq i \leq j \leq s$, so no two robots ever occupy such a pair of vertices.
		
		First, we show that $\mob(G(r,s)) = r$. To see that $\mob(G(r,s)) \geq r$, station a robot $R_i$ on $b_i$ for $1 \leq i \leq r$ and perform the moves $ b_r \move a_r, b_{r-1} \move a_{r-1}, \cdots ,b_1 \move a_1$ in this order, as in the previous part. From the starting configuration we can also perform the sequence of legal moves $b_s \move c_s,  b_{s-1} \move c_{s-1}, \cdots ,b_1 \move c_1$ in this order. The robots are always in general position and every vertex has been visited. Suppose for a contradiction that we have a mobile general position set with $\geq r+1$ robots on $G(r,s)$. As no pair of robots are stationed at vertices $a_i,b_i$ simultaneously for any $1 \leq i \leq r$, there must always be a robot in $C$. But as no pair of robots can be stationed on a set $\{ c_i,b_s\} $, $1 \leq i \leq s$, it follows that no robot can visit vertex $b_s$, a contradiction. Thus $\mob (G(r,s)) = r$.
		
		Next, we show that $\mob^*(G(r,s)) \geq s$. First we use Lemma~\ref{lem:mobility} to show that $(b_1,b_2,\cdots ,b_s)$ is a mobile gp-configuration. Starting from $(b_1,\cdots ,b_s)$, the robot $R_s$ can perform the sequence of moves $b_s \move a_s \move b_{s+1} \move a_{s+1} \move \cdots \move b_r \move a_r$. Also starting from $(b_1,\cdots ,b_s)$ the robots can perform the same sequences of moves as in the preceding part to visit the vertices of $\{ a_1,\dots, a_s,c_1,\dots ,c_s\} $.   
		
		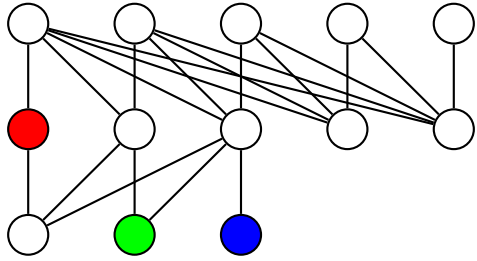
\begin{figure}[h!]
			\centering
			\begin{tikzpicture}[x=0.2mm,y=-0.2mm,inner sep=0.5mm,scale=1,thick,vertex/.style={circle,draw,minimum size=15}]
				
				\node at (0,70) [vertex, fill=red] (b1) {};
				\node at (70,70) [vertex] (b2) {};
				\node at (140,70) [vertex] (b3) {};
				\node at (210,70) [vertex] (b4) {};
				\node at (280,70) [vertex] (b5) {};
				
				\node at (0,140) [vertex] (c1) {};
				\node at (70,140) [vertex,fill=green] (c2) {};
				\node at (140,140) [vertex,fill=blue] (c3) {};
				
				\foreach \i in {1, 2, ..., 5}{
					\node at (\i*70-70,0) [vertex] (a\i) {};
					\foreach \j in {1, ..., \i}{
						\path (a\j) edge (b\i);
						\ifnum\i<4
						\path (c\j) edge (b\i);
						\fi
					}
				}	
			\end{tikzpicture}
			\caption{The graph $G(5,3)$ with the robots $R_1$ $R_2, R_3$ (in red, green and blue, respectively) after the first round of moves in the permutation.}
			\label{fig:G(5,3)perm1}
		\end{figure}   
		
		We now demonstrate how to permute the robots in the configuration $(b_1,\cdots ,b_s)$ in an $s$-cycle. Then we will be done by Lemma~\ref{lem:swapping}. First, move \[ b_s \move c_s, b_{s-1} \move c_{s-1}, \cdots ,b_2 \move c_2  \] in this order. See Figure~\ref{fig:G(5,3)perm1}. Following this, move $b_1 \move a_1 \move b_r$. Then perform the moves \[c_2 \move b_2 \move c_1 \move b_1, c_3 \move b_3 \move c_2 \move b_2, \cdots, c_s \move b_s \move c_{s-1} \move b_{s-1}\] in this order, see Figure~\ref{fig:G(5,3)perm2}. Finally, move $b_r \move a_s \move b_s$. We have now permuted the configuration $(b_1,b_2,\cdots ,b_s)$ to the configuration $(b_s,b_1,b_2,\cdots ,b_{s-1})$ by a sequence of legal moves, showing that $\mob^*(G(r,s)) \geq s$.
		
		\begin{figure}[h!]
			\centering
			\begin{tikzpicture}[x=0.2mm,y=-0.2mm,inner sep=0.5mm,scale=1,thick,vertex/.style={circle,draw,minimum size=15}]
				
				\node at (0,70) [vertex] (b1) {};
				\node at (70,70) [vertex] (b2) {};
				\node at (140,70) [vertex] (b3) {};
				\node at (210,70) [vertex] (b4) {};
				\node at (280,70) [vertex,fill=red] (b5) {};
				
				\node at (0,140) [vertex] (c1) {};
				\node at (70,140) [vertex,fill=green] (c2) {};
				\node at (140,140) [vertex,fill=blue] (c3) {};
				
				\foreach \i in {1, 2, ..., 5}{
					\node at (\i*70-70,0) [vertex] (a\i) {};
					\foreach \j in {1, ..., \i}{
						\path (a\j) edge (b\i);
						\ifnum\i<4
						\path (c\j) edge (b\i);
						\fi
					}
				}
				
				\draw [->,gray] (c2) to[bend right=30] node {} (b2);		
				\draw [->,gray] (b2) to[bend right=30] node {} (c1);	
				\draw [->,gray] (c1) to[bend left=30] node {} (b1);		
				
			\end{tikzpicture}
			\caption{The graph $G(5,3)$ with the second set of moves for $R_2$ ($c_2 \move b_2 \move c_1 \move b_1$) indicated with arrows.}
			\label{fig:G(5,3)perm2}
		\end{figure}

		Finally we prove the upper bound $\mob ^*(G(r,s)) \leq s$. Suppose that we have a completely mobile general position configuration of at least three robots. When a robot visits $b_1$, all other robots are at vertices with strictly larger indices. It follows that there is some point at which a robot $R_1$ has index less than or equal to the indices of all other robots, and at the next move a robot $R_2$ moves to a vertex with index strictly smaller than the other robots. To avoid $R_1$ and $R_2$ occupying a maximal general position set immediately before or after this move, $R_1$ must be at some $c_i$, $1 \leq i \leq s$, and $R_2$ at some $b_j$ with $s+1 \leq j \leq r$, and the move has the form $b_j \move a_k$, where $1 \leq k < i \leq s < j \leq r$. Before the move there are no robots in $\{ a_t: 1 \leq t \leq j\} \cup \{ b_t: i \leq t \leq s\} $. After the move there are no robots left in $\{ a_t,b_t: s+1\leq t \leq r\} $, since $c_ib_ia_kb_ta_t$ is a shortest path for $s+1 \leq t \leq r$. Hence, after the move there can be robots only at $a_k$ and in $\{ c_t : i \leq t \leq s\} $. We conclude that there are at most $s$ robots.
	\end{proof}
	
	\subsection{Completely mobile mutual visibility sets}\label{subsec:completely mobile mv}
	
	We begin our study of completely mobile mutual visibility sets by characterising the graphs with completely mobile mutual visibility number two.
	
	\begin{lemma}\label{mobmv>3}
		If $G$ is a connected graph with circumference $\cir (G) \geq 4$, then $\cmobmv{G} \geq 3$.
	\end{lemma}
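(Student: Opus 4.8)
The plan is to take a longest cycle $C$ of $G$ and to exhibit a completely mobile mutual visibility configuration of three robots living on or near $C$. Write $C = v_0v_1\cdots v_{c-1}v_0$, so $c = \cir(G) \ge 4$. By Lemma~\ref{lem:mobility} it suffices to describe, from a single fixed configuration, a legal sequence of moves visiting an arbitrary prescribed vertex of $G$, and by Lemma~\ref{lem:swapping} that configuration is completely mobile once the three robots can be cyclically permuted. Both will follow from ``rotating'' the robots around $C$: sliding the three robots one at a time into the vacant part of $C$ visits every vertex of $C$, and carrying this out once all the way around the three occupied positions (a standard token-sliding manoeuvre) realises a $3$-cycle of the robots, which is exactly what Lemma~\ref{lem:swapping} needs.

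First I would fix the starting configuration with the robots at three vertices of $C$ splitting it into arcs that are as balanced as possible ($1,1,2$ if $c=4$; $1,2,2$ if $c=5$; lengths close to $c/3$ in general). The central step is to show that this is a mutual visibility set of $G$, and that it can be advanced by one step along $C$ — choosing at each stage which robot to move so that no arc collapses — while remaining a mutual visibility set. This is precisely where the hypothesis that $C$ is a \emph{longest} cycle enters: any path of $G$ joining two vertices of $C$ with all internal vertices off $C$, together with one of the two arcs of $C$ between its ends, forms a cycle, so such a ``shortcut'' cannot be longer than the shorter of those two arcs; in particular there are essentially no shortcuts around a short arc, since one of them would let $C$ be enlarged. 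With this one checks that for each of the three pairs of robot positions there is a geodesic of $G$ avoiding the third robot, and that this persists after a single advancing move (sometimes only after first nudging one of the two stationary robots by one step). The small cases $c = 4, 5$ and the residue of $c$ modulo $3$ are verified directly.

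Second I would deal with a vertex $w \notin V(C)$. Take a shortest path $Q : w = q_k, q_{k-1}, \dots, q_0 = v$ from $w$ to $V(C)$; by minimality $Q$ meets $C$ only in $v$, and $d(q_j, V(C)) = j$ for all $j$ (otherwise $w$ would be nearer to $C$). Using the rotation above, move one robot $R$ to $v$, leaving the other two at positions $x, y$ for which $\{v, x, y\}$ is one of the balanced states, then send $R$ along $Q$ to $w$ and back afterwards. While $R$ is at $q_j$ with $j \ge 1$, no vertex of $C$ lies within distance $j - 1$ of $q_j$, so whenever one of the two stationary robots attains distance exactly $j$ from $q_j$ no geodesic from $q_j$ to it can run through the other; choosing $x$ and $y$ suitably (so that $\{v,x,y\}$ is balanced and neither ever lies strictly between $R$ and the remaining robot) keeps the three mutually visible for the whole excursion. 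Since every vertex of $G$ lies on $C$ or is reached by such an excursion, all vertices get visited.

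The main obstacle is the mutual-visibility bookkeeping in the rotation step. In a bare cycle three suitably spread tokens automatically see one another and slide around freely, but $G$ may have chords and longer detours, so a robot could in principle lie on \emph{every} geodesic between the other two — i.e.\ be ``between'' them — which would break mutual visibility. Showing that, with the balanced placement and a careful choice of which robot to advance at each step, this never occurs, is the heart of the proof and reduces to a somewhat tedious case analysis, with the cycles of length $4$ and $5$ the most delicate.
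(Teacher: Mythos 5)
There is a genuine gap, and it originates in your choice of cycle. You work with a \emph{longest} cycle $C$ and claim that its maximality forbids shortcuts: ``such a shortcut cannot be longer than the shorter of those two arcs; in particular there are essentially no shortcuts around a short arc, since one of them would let $C$ be enlarged.'' The maximality of $C$ only gives the \emph{upper} bound $|P|\leq\min\{a,b\}$ on a path $P$ joining two vertices of $C$ through vertices off $C$ (where $a,b$ are the two arc lengths); it does not rule out shortcuts strictly shorter than the arc they bypass. For instance, adding the chord $v_0v_2$ to $C_6=v_0v_1\cdots v_5v_0$ leaves $C_6$ as the longest cycle but makes $d(v_0,v_2)=1<2$. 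More generally a longest cycle can carry arbitrary chords and detours of length up to $\min\{a,b\}$, so it need not be anywhere near isometric, and the distances and geodesics between vertices of $C$ are essentially uncontrolled. The ``mutual-visibility bookkeeping'' that you yourself identify as the heart of the proof therefore rests on a false structural premise, and the case analysis it is deferred to is never carried out; as written, nothing prevents one robot from lying on every geodesic between the other two during the rotation. The excursion step for vertices off $C$ has a similar soft spot: your distance argument handles geodesics from $q_j$ to a robot at distance exactly $j$, but not geodesics to the farther robot, nor the visibility of the two stationary robots with each other while $R$ is off the cycle.

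The paper sidesteps all of this by taking the \emph{shortest} cycle that is not a triangle. Such a cycle is either isometric in $G$ or a $4$-cycle (possibly with chords); in the isometric case distances along the cycle agree with distances in $G$, so the verification that three suitably spread robots remain mutually visible while rotating reduces to the induced-cycle computation already available in the literature, and Lemma~\ref{lem:swapping} then yields complete mobility exactly as you intend. If you replace ``longest cycle'' by ``shortest non-triangle cycle'' and justify the isometry claim, your rotation-plus-excursion outline becomes workable; with the longest cycle it does not.
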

	\begin{proof}
		Let $G$ be any graph with $\cir (G) \geq 4$. Let $C$ be the shortest cycle of $G$ that is not a triangle. It is easy to verify that $C$ is either isometric or is a 4-cycle (possibly with chords). In either case three robots can traverse $C$ and permute their positions. By reasoning similar to that of~\cite[Theorem 3.4]{dettlaff2024} the three robots are a mobile mutual visibility set, and by Lemma~\ref{lem:swapping} they are completely mobile.  
	\end{proof}
	
	\begin{corollary}
		If $G$ is a connected graph with $\mobmv (G) > 3$, then $\mobmv ^*(G) \geq 3$.    
	\end{corollary}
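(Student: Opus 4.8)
The plan is to deduce the corollary from Lemma~\ref{mobmv>3} by showing that a connected graph $G$ with $\mobmv(G) > 3$ must satisfy $\cir(G) \geq 4$; once that is established, Lemma~\ref{mobmv>3} gives $\mobmv ^*(G) \geq 3$ directly and we are done.

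To get $\cir(G) \geq 4$ I would argue by contradiction, assuming $\cir(G) \leq 3$. Then $G$ has no cycle of length at least four, which forces every block of $G$ to be a clique on at most three vertices (a $2$-connected graph on four or more vertices has circumference at least four), so $G$ is a block graph with $\omega(G) \leq 3$. If $\omega(G) \leq 2$, then $G$ is a tree, and the characterisation of graphs with mobile mutual visibility number two from~\cite{dettlaff2024} gives $\mobmv(G) = 2$. If $\omega(G) = 3$, I would use that block graphs are geodetic (as already noted in the proof of Proposition~\ref{prop:blockgraph cmobmv}), so mutual visibility sets coincide with general position sets and hence $\mobmv(G) = \mob(G)$; combining this with the mobile general position number of block graphs determined in~\cite{klavzar-2023}, namely $\mob(G) = \omega(G)$, yields $\mobmv(G) = 3$. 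In every case $\mobmv(G) \leq 3$, contradicting the hypothesis, so indeed $\cir(G) \geq 4$.

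The only genuinely delicate point is the case $\omega(G) = 3$, where one needs the upper bound $\mobmv(G) \leq 3$ for a block graph containing a triangle; the route above, via geodicity together with the cited formula for $\mob$ of block graphs, is the most economical. If a self-contained argument is preferred, one can instead show directly that in such a block graph any mutual visibility configuration of four or more robots traps at least two robots inside a single leaf block, behind its cut-vertex, so that this cut-vertex can never be visited and the configuration fails to be mobile. Everything else — the structural description of graphs with $\cir(G) \leq 3$ and the concluding appeal to Lemma~\ref{mobmv>3} — is routine.
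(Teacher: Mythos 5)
Your proposal is correct and follows essentially the same route as the paper: both reduce to showing that $\cir(G)\leq 3$ forces $\mobmv(G)\leq 3$ (tree case and block-graph-with-triangle case), and then invoke Lemma~\ref{mobmv>3} when $\cir(G)\geq 4$. The only cosmetic difference is that for the $\omega(G)=3$ case the paper cites the known value of $\mobmv$ for block graphs from~\cite{dettlaff2024} directly, whereas you rederive it via geodicity and the mobile general position number of block graphs; both are fine.
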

	\begin{proof}
		If $G$ is a tree, then $\mobmv (G) = 2$ by~\cite[Theorem 3.4]{dettlaff2024}. If $G$ has $\cir (G) = 3$, then $G$ is a block graph with $\omega (G) = 3$, and hence we would have $\mobmv{G} = 3$ by~\cite[Corollary 3.3]{dettlaff2024}. Otherwise the result follows by Lemma~\ref{mobmv>3}.
	\end{proof}

	\begin{theorem}\label{thm:cmobmv = 2}
		Let $G$ be a connected graph. Then $\cmobmv{G} = 2$ if and only if $G$ is a block graph with $\omega(G) \leq 3$, but not a path.
	\end{theorem}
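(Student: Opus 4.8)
The plan is to prove both directions of the equivalence, using the machinery already developed in the excerpt. For the ``if'' direction, suppose $G$ is a block graph with $\omega(G) \le 3$ that is not a path. Since $G$ is not a path, it contains a vertex of degree at least $3$; moreover, as $\cir(G) \le 3$ (block graphs have no induced cycles longer than their largest clique), every block is either an edge or a triangle. I will argue that $G$ has a mobile mutual visibility configuration with exactly two robots, and that this configuration is completely mobile. Mobility with two robots in any tree-like/block structure should follow from the fact that in a block graph any two vertices are joined by a unique geodesic, so a two-robot configuration is automatically a mutual visibility set (two points are always mutually visible), and the only constraint is reachability: I need to show two robots can cooperatively sweep every vertex. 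The presence of a degree-$\ge 3$ vertex is what makes this possible — near such a vertex one robot can ``step aside'' into a pendant branch while the other passes, exactly as in the swapping argument of Proposition~\ref{prop:comp mob small values}. Complete mobility then follows from Lemma~\ref{lem:swapping}: I exhibit a sequence of legal moves swapping the two robots, again exploiting a vertex of degree $\ge 3$.

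For the ``only if'' direction, suppose $\cmobmv(G) = 2$. If $G$ were a path then $\mobmv(G) = 2$ as well but in fact $\cmobmv(P_n)$ — wait, a path has $\mobmv = 2$; I should instead note the statement explicitly excludes paths, so I only need: if $G$ is not a block graph, or if $G$ is a block graph with $\omega(G) \ge 4$, then $\cmobmv(G) \ge 3$. If $\omega(G) \ge 4$, then in particular $\omega(G) \ge 3$, so by Theorem~\ref{thm:cliquevsmobmvnum} we get $\mobmv^*(G) \ge \omega(G) - 1 \ge 3$. If $G$ is not a block graph, then $G$ contains a cycle that is not a triangle (a block that is neither an edge nor a complete graph, or a complete graph $K_r$ with $r \ge 4$ which again gives $\omega \ge 4$), hence $\cir(G) \ge 4$, and Lemma~\ref{mobmv>3} yields $\cmobmv(G) \ge 3$. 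Combining, if $\cmobmv(G) = 2$ and $G$ is not a path, then $G$ must be a block graph with $\omega(G) \le 3$.

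The main obstacle I anticipate is the reachability argument in the ``if'' direction: showing that two robots really can visit \emph{every} vertex of an arbitrary block graph with a vertex of degree $\ge 3$, and can be permuted. The subtle point is that in a block graph with many pendant triangles and long paths, a single ``hub'' vertex of degree $\ge 3$ may be far from much of the graph, so I cannot simply do all the work near one hub. The right approach is probably an induction or a ``peeling'' argument: route one robot $R_1$ out to a leaf block, park $R_2$ at a cut vertex of degree $\ge 3$ while $R_1$ threads past, and reduce to a smaller block graph. One must check that when $G$ has more than one nontrivial block, the cut vertex structure always provides enough room — and handle the degenerate case where the only high-degree vertices sit in a single triangle. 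I expect this to be the bulk of the proof; the mutual-visibility condition itself is free for two robots, and the upper-bound direction is essentially a bookkeeping exercise citing Theorem~\ref{thm:cliquevsmobmvnum} and Lemma~\ref{mobmv>3}.
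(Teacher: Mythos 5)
Your ``only if'' direction is fine and matches the paper's in substance: the paper simply notes that Lemma~\ref{mobmv>3} forces $\cir(G)\leq 3$, which already implies that $G$ is a block graph with $\omega(G)\leq 3$, and then excludes paths via $\cmobmv(G)=1$ iff $G$ is a path; your case split using Theorem~\ref{thm:cliquevsmobmvnum} for $\omega(G)\geq 4$ is a harmless redundancy. The real problem is the ``if'' direction, where you have a genuine gap: you only set out to prove the \emph{lower} bound $\cmobmv(G)\geq 2$, and you never address the \emph{upper} bound $\cmobmv(G)\leq 2$. Showing that two robots work does not show that three robots fail, and for a block graph built from triangles this is not automatic --- three robots placed on a triangle are mutually visible, so something must be proved to rule out a completely mobile configuration of size three. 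The paper gets this from two sources: for trees, the known fact $\mobmv(G)=2$; and for block graphs with $\omega(G)=3$, Proposition~\ref{prop:blockgraph cmobmv}, whose proof gathers any hypothetical configuration of $\omega(G)$ robots into a maximum clique and then shows that the cut-vertex structure traps each robot, so the configuration cannot be completely mobile. Nothing in your proposal plays this role.

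Conversely, the part you flag as ``the bulk of the proof'' --- the reachability and permutability of two robots --- is actually the easy part and needs no new peeling induction. Any $2$-set is automatically both a general position set and a mutual visibility set, so every legal gp-move is a legal $\mu$-move, giving $\cmobmv(G)\geq\cmob(G)$; Proposition~\ref{prop:comp mob small values} already proves $\cmob(G)\geq 2$ for every connected graph containing a vertex of degree at least $3$ (which a non-path block graph with $\omega(G)\leq 3$ always has). So the lower bound is a one-line citation, and your effort should instead go into the missing upper bound.
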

	\begin{proof}
		Suppose that $\cmobmv{G} = 2$. By Lemma~\ref{mobmv>3}, the circumference of $G$ is $\cir (G) \leq 3$, and so $G$ is a block graph with $\omega (G) \leq 3$. But as $\cmobmv(G) = 1$ if and only if $G$ is a path, the result follows.
		
		Conversely, if $G$ is a tree, but not a path, then $\cmobmv{G} \leq \mobmv{G} = 2$ by~\cite[Theorem 3.4]{dettlaff2024} and $\cmobmv (G) \geq 2$ by Proposition~\ref{prop:comp mob small values}, so that $\cmobmv (G) = 2$. The result for block graphs with clique number three follows from Proposition~\ref{prop:blockgraph cmobmv}.
	\end{proof}
	
	We now prove a realisation result for the mobile and completely mobile mutual visibility numbers analogous to Theorem~\ref{thm:realisation comp mob gp}. 
	
	\begin{theorem}\label{cmobmvlower}
		For $r,s \geq 2$, there exists a graph $G$ with $\cmobmv(G) = s$ and $\mobmv(G) = r$ if and only if i) $r \geq s \geq 3$ or ii) $s = 2$ and $r \in \{ 2,3\} $.
	\end{theorem}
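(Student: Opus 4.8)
The plan is to prove both implications of the ``if and only if''.

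\emph{Necessity.} Every completely mobile $\pi$-configuration is in particular a mobile one, so $\cmobmv(G)\le\mobmv(G)$ and hence $s\le r$. The only further restriction to rule out is that $s=2$ forces $r\le 3$. So suppose $\cmobmv(G)=2$. By Theorem~\ref{thm:cmobmv = 2}, $G$ is a block graph with $\omega(G)\le 3$ that is not a path. If $G$ is a tree then $\mobmv(G)=2$ by \cite[Theorem 3.4]{dettlaff2024}, so $r=2$; otherwise $\omega(G)=3$ and $\mobmv(G)=3$ by \cite[Corollary 3.3]{dettlaff2024}, so $r=3$. Hence only the stated pairs can occur.

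\emph{Sufficiency, the small and diagonal cases.} For $(r,s)=(2,2)$ take any tree that is not a path, e.g.\ $K_{1,3}$: it has $\mobmv=2$ and, by Theorem~\ref{thm:cmobmv = 2}, $\cmobmv=2$. For $(r,s)=(3,2)$ take $K_3$ (or any block graph with clique number $3$): it has $\mobmv=3$ and, by Proposition~\ref{prop:blockgraph cmobmv}, $\cmobmv=2$. For $r=s\ge 3$ I would use a complete multipartite graph $H$ on $r+1$ vertices all of whose parts have size two, adding one part of size one when $r$ is even, i.e.\ $H=K_{2,\dots,2}$ with $(r+1)/2$ parts when $r$ is odd and $H=K_{2,\dots,2,1}$ with $r/2$ parts of size two when $r$ is even. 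The common-neighbour description of visibility shows that in $H$ the complement of any single vertex is a mutual visibility set of size $r$, so $\mu(H)=r$; moreover the unique unoccupied vertex can be shuttled to any vertex of $H$, and the resulting ``hole graph'' is $2$-connected and not a cycle (except in the base case $r=3$, where $H=C_4$ and cyclic rotations already suffice), so by Lemma~\ref{lem:swapping} the $r$ robots can be brought into every permutation. Thus $\mobmv(H)=\cmobmv(H)=r$.

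\emph{Sufficiency, the main case $r>s\ge 3$.} Here I would construct a graph $G$ tailored to the task, modelled on the half-graph graph $G(r,s)$ used in the proof of Theorem~\ref{thm:realisation comp mob gp} (which already has $\mob=r$ and $\cmob=s$, hence $\mobmv\ge r$ and $\cmobmv\ge s$ for free). Four things would need to be checked: (i) an explicit sequence of legal moves from a size-$r$ mutual visibility set that visits every vertex, giving $\mobmv\ge r$; (ii) $\mu(G)=r$, forcing $\mobmv\le r$ --- this is where one must ensure that the mutual visibility sets of the gadget are no larger than its general position sets, possibly after adjusting the construction (adding edges or auxiliary vertices) so that the distance-two pairs that could ``short-circuit'' the half-graph structure have essentially unique shortest paths; (iii) an explicit completely mobile configuration of $s$ robots, using an auxiliary set of roughly $s$ vertices (the analogue of the $c_i$ in $G(r,s)$) to perform a cyclic permutation of the robots as in the proof of Theorem~\ref{thm:realisation comp mob gp}, then invoking Lemma~\ref{lem:swapping}; and (iv) a choke-point argument: if more than $s$ robots are present, examine the first moment a robot crosses the bottleneck of $G$ and argue, as in that proof, that two robots are then forced onto a maximal mutual visibility pair, contradicting complete mobility.

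\emph{Main obstacle.} The crux is steps (ii) and (iv) in the case $r>s$. In the general position setting the maximal position sets of the half graph are just the pairs $\{a_i,b_j\}$, which makes both the size bound and the choke-point argument transparent; mutual visibility sets are genuinely more flexible, so either the half graph must be reinforced so that its mutual visibility sets remain controlled, or a sparser variant with (almost) unique shortest paths must be used. Arranging simultaneously that $\mobmv=r$ and that $\cmobmv$ lands on \emph{exactly} $s$ --- rather than on the floor $3$ guaranteed by Lemma~\ref{mobmv>3}, or on some larger value --- is the delicate balance, and verifying (i)--(iv) for the resulting graph is the bulk of the argument.
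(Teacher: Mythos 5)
Your necessity argument and the easy sufficiency cases are correct and essentially match the paper: $\cmobmv(G)\le\mobmv(G)$ always, and Theorem~\ref{thm:cmobmv = 2} together with the cited results of~\cite{dettlaff2024} forces $r\in\{2,3\}$ when $s=2$, realised by $K_{1,3}$ and $K_3$. Your complete multipartite construction for $r=s\ge 3$ is a legitimate (slightly different) alternative to the paper's $C_4$, $K_{3,3}$ and $K_{r-1,4}$, and the case $r=s+1$, which you silently fold into your ``main case,'' is in fact trivial: $K_r$ has $\mobmv(K_r)=r$ and $\cmobmv(K_r)=r-1$.

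The genuine gap is the case $r\ge s+2$, which is the heart of the theorem, and there you give only a plan, not a proof. Moreover the plan points in a direction the paper deliberately abandons: you propose to reinforce the half graph $G(r,s)$ so that its mutual visibility sets stay controlled, and your own ``main obstacle'' paragraph correctly identifies that steps (ii) and (iv) could fail for exactly this reason --- but you leave them unresolved. The paper does not use the half graph here at all. It constructs $G(a,b)$ from the strong product $P_a\strongprod P_2$ by attaching a clique $W$ of order $b-1$ joined completely to the last column $\{(a,1),(a,2)\}$, and sets $a=r-s$, $b=s$. The bound $\mobmv(G(a,b))\le a+b$ then follows because more robots would force either no robot in $W$ (capping the count at $\mu(P_a\strongprod P_2)=a+2$) or two fully occupied columns $\{i\}\times[2]$ and $\{j\}\times[2]$ with $i<j$, the latter of which blocks every shortest path from the former to $W$; the bound $\cmobmv(G(a,b))\le b$ comes from a cut-set argument at the first moment a third robot enters a column already holding the two leftmost robots. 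Neither argument has an analogue in your half-graph sketch, so as written your proposal establishes the theorem only for $s=2$ and for $r\le s+1$.
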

	\begin{proof}
		For any graph $G$, $\cmobmv (G) \leq \mobmv (G)$. Also if $s = 2$, then it follows from Theorem~\ref{thm:cmobmv = 2} and~\cite[Corollary 3.3]{dettlaff2024} that $r = 2$ or $3$, and these values are achieved by $K_{1,3}$ and $K_3$. Hence we must only show the existence of a graph with $\cmobmv(G) = s$ and $\mobmv(G) = r$ for $r \geq s \geq 3$. We have $\cmobmv (C_4) = \mobmv (C_4) = 3$, $\cmobmv (K_{3,3}) = \mobmv (K_{3,3}) = 4$ and also $\cmobmv (K_{r-1,4}) = \mobmv (K_{r-1,4}) = r$ for $r \geq 5$. If $r = s+1$, $s \geq 3$, then the complete graph on $K_r$ has the required values. Hence we can assume that $r \geq s+2$.
		
		Let $s \geq 3$. For $a \geq 2, b \geq 3$, construct a graph $G(a,b)$ by starting with the strong product $P_{a} \strongprod P_2$, adding a clique $W$ of order $b-1$ and adding all edges between $\{ (a,1),(a,2)\} $ and $W$. We will show that $\mobmv (G(a,b)) = a+b$ and $\cmobmv (G(a,b)) = b$. Then the result will follow on setting $a = r-s,b = s$. 
		
		We start with the mobile mutual visibility number. The set $([a] \times \{ 2\} ) \cup W \cup \{ (1,1)\} $ is a mutual visibility set, and for $2 \leq i \leq a$ the robot at $(i,2)$ can make the legal move $(i,2) \move (i,1)$, so $\mobmv (G(a,b)) \geq a+b$. If there are $> a+b$ robots on $G(a,b)$, then there must always be a robot in $W$ (for otherwise there would be at most $\mu (P_a \strongprod P_2) = a+2$ robots) and there would be at least two elements $i,j \in [a]$, $i < j$, with robots on each vertex of $\{ i,j\} \times [2]$. However then all shortest paths from $\{ i\} \times [2]$ to any robot in $W$ would pass through a robot in $\{ j\} \times [2]$).  Hence $\mobmv (G(a,b)) \leq \mu (G(a,b)) = a+b$ and equality holds. 
		
		Now we deal with the completely mobile mutual visibility number. By Theorem~\ref{thm:cliquevsmobmvnum} we have $\cmobmv (G(a,b)) \geq b$. Suppose for a contradiction that there is a completely mobile mutual visibility set of $G(a,b)$ with $\geq b+1$ robots. Note that there are always at least two robots outside of $W$. We take as our initial configuration the moment when some robot $R_1$ visits $(1,1)$. Out of all robots in $P_a \strongprod P_2$ apart from $R_1$, let $R_2$ be the robot with smallest first coordinate. For a robot other than $R_1$ or $R_2$ to visit $(1,1)$, there is a first move which results in a third robot $R_3$ having the same first coordinate as either $R_1$ or $R_2$ (without loss of generality, say it is $R_2$). However, at this point $R_2$ and $R_3$ will occupy a cut-set in $G(a,b)$, robot $R_1$ has strictly smaller first coordinate than $R_2$ and $R_3$, and there is a robot $R_4$ in $W$ or with strictly larger first coordinate than $R_3$. Hence $R_1$ and $R_4$ will not be mutually visible. Thus $\cmobmv (G(a,b)) = b$, completing the proof.
	\end{proof}
	The construction in Theorem~\ref{cmobmvlower} can also be used in a straightforward way to show that for any $r,s \geq 4$ there is a graph $G$ with $\mu (G) = r$ and $\gp (G) = s$ if and only if $r \geq s \geq 4$ (it was also pointed out by Di Stefano that here one can use a Cartesian product instead of a strong product before appending the clique~\cite{DiStefanocomment}). Computer search by Erskine~\cite{Erskine} has identified graphs with general position number three and mutual visibility number $r$ for all $3 \leq r \leq 7$, but the existence of a graph with $\gp (G) = 3$ and $\mu (G) \geq 8$ remains unknown. 
	
	\section{Line graphs of complete graphs}\label{sec:line graph complete graph}
	
	Mutual visibility sets in $L(K_n)$ were first discussed in~\cite{cicerone-2024}. It is convenient to picture these sets as subsets of the edge set of $K_n$. Following the notation of~\cite{cicerone-2024}, for any set $F \subseteq E(K_n)$ of edges of $K_n$, we will denote the corresponding vertices of $L(K_n)$ by $S_F$ and the subgraph of $K_n$ induced by $F$ by $(K_n)_F$. It was shown in~\cite{cicerone-2024} that $S_F$ is a mutual visibility set if and only if $(K_n)_F$ is $K_4$-free. It follows immediately from Tur\'{a}n's Theorem that largest mutual visibility sets of $L(K_n)$ correspond to the edge sets of the $3$-partite Tur\'{a}n graph $T(n,3)$. The authors of~\cite{dettlaff2024} showed that $\mobmv (L(K_n)) < \mu (K_n)$ for $n \geq 6$ and asked how large the difference $\mu (K_n)-\mobmv (K_n)$ can be. We answer this question, and in fact find the exact value of $\mobmv (L(K_n))$ for all $n$, using a stability result for Tur\'{a}n's Theorem from~\cite{amin-2013}.
	
	\begin{theorem}
		For $n \geq 6$, $\mobmv{L(K_n)} = \cmobmv{L(K_n)} = \lfloor{\frac{n^2}{3}}\rfloor - \lfloor{\frac{n}{3}}\rfloor + 1$.
	\end{theorem}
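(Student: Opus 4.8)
\emph{Setup and strategy.} Write $q=\lfloor n/3\rfloor$, and recall from the discussion above that the mutual visibility sets of $L(K_n)$ are exactly the edge sets of the $K_4$-free subgraphs of $K_n$; consequently, at every stage of a mobile traversal the occupied edges form a $K_4$-free graph on the $n$ vertices of $K_n$ with the same number $m$ of edges, and in particular $m\le t(n,3)$. The plan is to prove the two inequalities $\mobmv(L(K_n))\le t(n,3)-q+1$ and $\cmobmv(L(K_n))\ge t(n,3)-q+1$; since $\cmobmv\le\mobmv$ always, this pins down both quantities to the claimed value at once.

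\emph{Lower bound (explicit construction).} I would start from the Turán graph $T(n,3)$ with near-balanced parts $V_1,V_2,V_3$, choose a smallest part (so $|V_3|=q$) and a vertex $v\in V_1$, delete the $q$ edges from $v$ to $V_3$, and add back a single edge $vv'$ with $v'\in V_1$. This graph has exactly $t(n,3)-q+1$ edges and is $K_4$-free, since the only new adjacency $vv'$ has all of its common neighbours inside the independent part $V_2$. For mobility I would describe the moves directly: the robot on $vv'$ can be slid across every internal edge of $V_1$ incident to $v$ and across every edge from $v$ to $V_3$; to reach internal edges incident to another vertex $u$, one first shifts $q$ robots off the crossing edges at $u$ into the $q$ now-vacant slots $v\times V_3$, making $u$ the new ``light'' vertex, and repeats. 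For $n\equiv 2\pmod 3$ a strictly balanced partition does not free enough crossing edges to expose the internal edges inside a smallest part, so there one instead works with a partition of type $(q,q+2,q)$ (whose complete $3$-partite graph has $t(n,3)-1$ edges) and carries two internal edges at the light vertex to keep the count at $t(n,3)-q+1$. Permutability of the robots, hence complete mobility, follows from the same slides via Lemma~\ref{lem:swapping}. Checking that every intermediate configuration stays $K_4$-free is a finite case analysis on $n\bmod 3$: routine, but lengthy bookkeeping.

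\emph{Upper bound (via Tur\'an stability).} Suppose for contradiction that some mobile mutual visibility set of $L(K_n)$ has $m\ge t(n,3)-q+2$ robots, and let $H_0,H_1,\dots$ be the $K_4$-free graphs realised during a traversal, each with $m$ edges. The stability result of~\cite{amin-2013} gives, for $n$ large, that a $K_4$-free graph on $n$ vertices with more than $t(n,3)-q+1$ edges is $3$-colourable and all of its colour classes have size at least $q$; hence each $H_i$ is a complete $3$-partite graph on near-balanced parts with at most $q-2$ edges deleted. Fix the (unique) $3$-colouring $\chi$ of $H_0$; I claim $\chi$ stays proper for all $H_i$. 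A legal move turns $H_i$ into $H_{i+1}=H_i-e+e'$; then $H_{i+1}\supseteq H_i-e$, and $H_i-e$ is a complete $3$-partite graph on parts of size $\ge q$ with at most $q-1$ edges deleted, so an elementary argument (freeing a vertex to change parts costs at least $q$ deletions) shows $H_i-e$ still has $\chi$ as its \emph{only} proper $3$-colouring. Therefore any proper $3$-colouring of $H_{i+1}$ must equal $\chi$; but if $e'$ were internal for $\chi$ this is impossible, while $H_{i+1}$ is $3$-colourable by stability — contradiction. Hence $e'$ is $\chi$-crossing, and by induction every $H_i$ is a subgraph of the fixed complete $3$-partite graph on the classes of $\chi$. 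Consequently no pair of vertices lying in a common class of $\chi$ is ever occupied; since $\chi$ has $\sum_a\binom{|P_a|}{2}\ge 3\binom q2>0$ such pairs, this contradicts the requirement that every vertex of $L(K_n)$ be visited. The finitely many small $n$ for which the stability bound is not yet effective are handled by the computations of~\cite{Erskine}.

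\emph{Main obstacle.} The heart of the argument is the upper bound, and specifically the exact calibration of the stability input: one needs that dropping to $t(n,3)-q+1$ edges is \emph{precisely} what lets a single legal move recolour a vertex (by emptying its last edge to some part), whereas $t(n,3)-q+2$ edges force the colouring to be frozen. This is the reason the answer is $t(n,3)-q+1$ rather than a nearby value, and it is also why the lower-bound construction must deliberately operate with unbalanced partitions: the genuinely delicate design problem there is to produce one scheme of moves that exposes \emph{every} edge of $K_n$ (and permutes the robots) while never exceeding $t(n,3)-q+1$ robots and keeping each configuration $K_4$-free.
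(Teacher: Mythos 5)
Your overall strategy coincides with the paper's at the top level (the Amin--Faudree--Gould--Sidorowicz stability theorem for the upper bound, a Tur\'{a}n graph with roughly $q=\lfloor n/3\rfloor$ edges removed for the lower bound), but your upper bound is executed quite differently: you try to show that with $t(n,3)-q+2$ robots the $3$-colouring is \emph{frozen} for the entire traversal, so that no internal edge is ever occupied, whereas the paper only analyses the single moment at which a robot first enters an internal edge and derives a numerical contradiction from the edges that $K_4$-freeness forces to be absent at that instant. Your route is attractive, but as written it has a genuine gap: the claim that every $K_4$-free graph with more than $t(n,3)-q+1$ edges has all colour classes of size at least $q$ is false. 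For example, for $n=9$ the graph $K_{2,3,4}$ is $K_4$-free with $26=t(9,3)-1>t(9,3)-q+2$ edges and a class of size $q-1=2$; more generally a tripartition with smallest part $q-j$ only costs about $\binom{j+1}{2}$ edges, far less than $q$. Consequently your parenthetical ``freeing a vertex to change parts costs at least $q$ deletions'' is also not true as stated: the cost is the size of the \emph{target} part, which can be smaller than $q$. The uniqueness of the $3$-colouring of $H_i-e$ can probably still be rescued by balancing the recolouring cost (the size of the target part) against the quadratic edge deficiency of an unbalanced tripartition, but that computation is the crux of your argument and is missing; until it is supplied the induction does not go through.

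Two further points. Your lower bound is only a sketch: the construction (delete the $q$ edges from $v$ to $V_3$ and add one internal edge $vv'$) is a legitimate alternative to the paper's configuration (which deletes $q-1$ crossing edges at a single vertex and stays inside $T(n,3)$ initially), but the actual move sequences --- in particular how to reach internal edges of every part, how to handle the $n\equiv 2\pmod 3$ case you flag, and how to permute the robots for complete mobility --- are exactly where the work lies, and ``routine but lengthy bookkeeping'' does not discharge it; the paper writes these sequences out explicitly. Finally, deferring ``the finitely many small $n$'' to computer search is not available here: the theorem is claimed for all $n\geq 6$, and the stability input is used by the paper for that whole range, so you would need to verify that the version of the stability theorem you invoke is effective from $n=6$ onwards rather than appealing to an unspecified computation.
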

	\begin{proof}
		Let $n \geq 6$ and $F\subseteq E(K_n)$ be such that $S_F$ is a mobile mutual visibility set of $L(K_n)$. First, we show that $\mobmv(L(K_n)) \leq  \left \lfloor{\frac{n^2}{3}} \right \rfloor - \left \lfloor{\frac{n}{3}} \right \rfloor + 1$. From~\cite[Theorem 14]{amin-2013}, the largest size of a $K_4$-free graph that is not $3$-partite is $\left \lfloor{\frac{n^2}{3}}\right \rfloor - \left \lfloor{\frac{n}{3}}\right \rfloor + 1$. Therefore, we can assume that at every stage $(K_n)_F$ is $3$-partite.

		Let the three parts of the initial configuration of the robots be $A,B,C$ and suppose that the smallest of these parts has order $1 \leq \ell \leq \frac{n}{3}$. At some point robots must visit edges of $K_n$ that have both end-points within one of these sets, so we focus on the stage that a robot moves from an edge between two different parts $A,B,C$ to an edge of $K_n$ connecting two vertices of one of the parts; without loss of generality, the robot moves to the edge $a_1a_2$, where $a_1,a_2 \in A$. We denote the configuration before this move by $F$ and afterwards by $F'$. In $F'$, let $B'$ be the set of common neighbours of $a_1$ and $a_2$ in $B$, and $C'$ be the set of common neighbours of $a_1$ and $a_2$ in $C$. As $(K_n)_{F'}$ is $K_4$-free, there are no edges in $(K_n)_{F'}$ between $B'$ and $C'$. It follows that there are at least  \[ |B'||C'| + |B| - |B'| + |C| - |C'| - 1 \geq \min \{ |B|,|C|\}  - 1 \geq \ell - 1\] edges missing in $F$ between $A$ and $B \cup C$. The largest possible size of $(K_n)_F$ is therefore \begin{equation}
			\ell(n-\ell) + \left \lfloor{\frac{(n-\ell)^2}{4}} \right \rfloor - (\ell - 1).  \label{edgeineq}
		\end{equation}
		
		Differentiating with respect to $\ell $ (or, technically, two quadratics in $\ell $ that differ by $\frac{1}{4}$), we see that its maximum value is attained at one of nearest integers to $\frac{n-2}{3}$. It follows that 
		
		\begin{equation}
			\ell(n-\ell) + \left \lfloor{\frac{(n-\ell)^2}{4}} \right \rfloor - (\ell - 1) \leq \left \lfloor{\frac{n^2}{3}} \right \rfloor - \left \lfloor{\frac{n}{3}} \right \rfloor + 1,  
		\end{equation}
		thus completing the proof of the upper bound.

		It remains to show that a mobile mutual visibility set of order $\left \lfloor{\frac{n^2}{3}} \right \rfloor - \left \lfloor{\frac{n}{3}} \right \rfloor + 1$ exists. Write $n = 3q+r$, $0 \leq r \leq 2$. Begin with the subgraph $T(n,3)$ of $K_n$ and label the three parts of $T(n,3)$ as $A, B, C$, where $|A| = q$. Write $A = \{ a_1,\dots ,a_q\} $, $B = \{ b_1,\dots ,b_{|B|}\} $, $C = \{ c_1,\dots,c_{|C|} \} $. We let $F$ be the set of edges of $T(n,3)$ with the edges $b_1a_i$ deleted for $2 \leq i \leq q$. Then $|F| = \lfloor{\frac{n^2}{3}}\rfloor - \lfloor{\frac{n}{3}}\rfloor + 1$. As long as $(K_n)_F$ is $3$-partite, it will be $K_4$-free and $S_F$ will be mutually-visible. A missing edge $b_1a_i$, $2 \leq i \leq q$, of $T(n,3)$ can be visited from $F$ using the move $b_1a_1 \move b_1a_i$. It can also be easily seen that for any $x \in B \cup C$ we can move the edges of $F$ into a configuration $F_{x,j}$ given by deleting from $T(n,3)$ the edges $xa_i$ for $1 \leq i \leq q$ and $i \neq j$. 
		
		Finally we show how to visit edges of $K_n$ that join two vertices in the same part of $T(n,3)$. To visit an edge $b_ib_j$, start with the configuration $F_{b_i, 1}$ and make the move $a_1b_i \move b_ib_j$. As $b_i$ has no neighbours in $A$ after this move, the move does not create any copies of $K_4$. The process to visit an edge with both endpoints in $C$ is analogous. To visit $a_ja_i$ ($i \neq j$), begin with $F_{b_1,j}$ and perform the moves $a_jb_k \move b_1b_k$ for $2 \leq k \leq |B| - 1$, and finally make the move $a_jb_{|B|} \move a_ja_i$. At all times any adjacent vertices in $A$ or $B$ have no common neighbours in $B$ or $A$, respectively, so the subgraph remains $K_4$-free. Thus $F$ is a mobile mutual visibility set.
		
		Moreover, this configuration is completely mobile. We show how to permute the edges in $F$, and the result then follows from Lemma~\ref{lem:swapping}. Move $b_1c_{|C|} \move b_1a_2$ and then $b_1c_{|C| - i} \move b_1c_{|C| - i + 1}$ for $1 \leq i \leq |C| - 1$. Next, move $b_2c_1 \move b_1c_1$ and then $b_2c_i \move b_2c_{i-1}$ for $2 \leq i \leq |C|$. Continuing in this way through all vertices in $B$, we will finish with $b_{|B|}c_j$ free, where $j \in \{1, |C|\}$. Perform $a_1c_j \move b_{|B|}c_j$ and then follow a similar sequence to move all edges between $A$ and $C$ and leave $a_qc_k$ free where $k \in \{ 1, |C| \}$. Then move $a_qb_2 \move a_qc_k$ if $q$ is even and $a_qb_{|B|} \move a_qc_k$ otherwise. Following the same pattern to shift all of the edges in $F$ between $A$ and $B$ leaves $a_1b_1$ free. To finish, move $a_2b_1 \move a_1b_1$. At all times, only edges in $T(n,3)$ were occupied, so all moves were legal.
	\end{proof}

	\section{Conclusion}\label{sec:conclusion}
	
	We conclude the paper with some new open problems. Firstly, we observe that one aspect of our model of a swarm of robots is especially unrealistic, namely that only one robot moves at a time. We suggest that it would be of interest to relax this condition and allow the robots to move at the same time, or stay put.
	
	\begin{problem}
		How many robots can traverse the graph in general position/mutual visibility if more than one robot can move at a time?
	\end{problem}
	
	Returning to our picture of robots delivering groceries, it may also be observed that the main aim is to deliver to each house as quickly as possible. It could be that maximising the number of robots in the network could actually increase the time taken to visit each vertex. 
	
	\begin{problem}
		How long does it take for the robots in a mobile general position/mutual visibility configuration to visit all the vertices the graph? If the configuration is completely mobile, how long does it take for every robot to visit every vertex? Is the fastest solution always attained by a configuration with the largest number of robots?
	\end{problem}
	
	Conjecture~\ref{conj:maxsize mobile num 2} also suggests investigating the following extremal problem in greater detail.
	
	\begin{problem}
		What is the largest size of a graph with order $n$ and mobile general position number $k$?
	\end{problem}
	
	Theorem~\ref{the:clique num 5} showed that for any graph $G$ with $\omega(G) \geq 5$, $\mob(G) \geq 3$. It is not clear whether this lower bound can be met for larger clique numbers.
	
	\begin{problem}
		Are there graphs with arbitrarily large clique number and mobile general position number three?
	\end{problem}

	\section*{Acknowledgements}
	Aoise Evans conducted this research during a research internship that was supported through EPSRC Doctoral Training Partnership DTP 2224 Open University. Ethan Shallcross and Aditi Krishnakumar were funded by Open University research bursaries. Sumaiyah Boshar worked on this project as part of a Nuffield Research Placement at the Open University. The authors would like to thank Grahame Erskine for his help with computations. James Tuite also received funding for this project from EPSRC grant [EP/W522338/1].
	

\end{document}